\documentclass[11pt, reqno]{amsart}
\usepackage[utf8]{inputenc}
\usepackage{amsmath}
\usepackage{amsthm}
\usepackage{hyperref}
\usepackage{amssymb}
\usepackage{amsfonts}
\usepackage{caption}
\usepackage{MnSymbol}
\usepackage{circuitikz, booktabs}
\usepackage{tcolorbox}
\usepackage{textcomp}
\usepackage{relsize}
\usepackage{blindtext,amsmath,amsthm,amsfonts, textcomp, mathrsfs, mathtools}
\usepackage[shortlabels]{enumitem}

\usepackage[top=25mm,bottom=25mm,left=27mm,right=27mm]{geometry}
\newtheorem{theorem}{Theorem}[section]
\newtheorem{conjecture}[theorem]{Conjecture}
\newtheorem{corollary}[theorem]{Corollary}

\newtheorem*{theorem*}{Theorem}
\newtheorem{thmx}{Theorem}

\newtheorem*{remark*}{Remark}
\newtheorem*{problem*}{Problem}
\newtheorem*{conjecture*}{Conjecture}
\newtheorem*{question*}{Question}
\newtheorem{lemma}[theorem]{Lemma}
\newtheorem{proposition}[theorem]{Proposition}

\newcommand{\rom}[1]{\uppercase\expandafter{\romannumeral #1\relax}}
\newcommand{\Q}{\mathbb{Q}}
\newcommand{\Z}{\mathbb{Z}}

\newcommand{\rL}{\mathcal{L}}

\newcommand{\rK}{\mathcal{K}}
\newcommand{\rN}{\mathcal{N}}
\newcommand{\rO}{\mathcal{O}}
\newcommand{\R}{\mathbb{R}}
\newcommand{\C}{\mathbb{C}}
\newcommand{\F}{\mathbb{F}}
\newcommand{\rp}{\mathfrak{p}}

\newcommand{\q}{\mathfrak{q}}
\def\house#1{{%
    \setbox0=\hbox{$#1$}
    \vrule height \dimexpr\ht0+1.4pt width .4pt depth \dp0\relax
    \vrule height \dimexpr\ht0+1.4pt width \dimexpr\wd0+2pt depth \dimexpr-\ht0-1pt\relax
    \llap{$#1$\kern1pt}
    \vrule height \dimexpr\ht0+1.4pt width .4pt depth \dp0\relax
}}

\begin{document}

\title[On points of small height in infinite extensions]{On points of small height in infinite extensions}

\author[Anup B. Dixit]{Anup B. Dixit}
\author[Sushant Kala]{Sushant Kala}

\address{Department of Mathematics\\ Institute of Mathematical Sciences (HBNI)\\ CIT Campus, IV Cross Road\\ Chennai\\ India-600113}
\email{anupdixit@imsc.res.in}

\address{Department of Mathematics\\ Institute of Mathematical Sciences (HBNI)\\ CIT Campus, IV Cross Road\\ Chennai\\ India-600113}
\email{sushant@imsc.res.in}
\date{}

\begin{abstract}
    In this paper, we introduce the notion of asymptotically positive infinite extensions of $\Q$, in the spirit of the Tsfasman-Vl\u{a}du\c{t} theory of asymptotically exact families of number fields. For asymptotically positive extensions, we obtain lower bounds on the logarithmic Weil height, establishing the Bogomolov property for a wide range of infinite non-Galois extensions. Our result is in the spirit of the famous theorem of E. Bombieri and U. Zannier on Bogomolov property for totally $p$-adic extensions of type $(e,f)$. Additionally, our theorem can be interpreted as a $p$-adic equidistribution result on conjugates of $\alpha$, resonating with the archimedean equidistribution theorem \`{a} la F. Amoroso-M. Mignotte and Y. Bilu. In the parallel setting of elliptic curves, we derive lower bounds on the canonical height for points on an elliptic curve over asymptotically positive extensions, without any restriction on its reduction type.  In particular, this extends a result of M. Baker and C. Petsche in the context of totally $v$-adic extensions.
\end{abstract}

\subjclass[2020]{11G50, 11R04, 11R06, 11G05 
}

\keywords{Asymptotically positive extension, Weil height, Bogomolov property, Northcott property, Lehmer's conjecture, Canonical height of elliptic curves}

\maketitle

\section{\bf Introduction}
\medskip

The logarithmic Weil height $h:\overline{\Q}\setminus \{0\}\to \R^+\cup\{0\}$ plays a central role in understanding the arithmetic complexity of algebraic numbers and also induces a partial ordering on algebraic numbers with bounded degree. A well-known theorem of Kronecker characterizes the algebraic numbers $\alpha$ with $h(\alpha)=0$ as precisely the roots of unity. For $\alpha \in \overline{\Q}\setminus\{0\}$ not a root of unity, Lehmer famously conjectured that
$$
h(\alpha) > \frac{c}{\deg(\alpha)}
$$
for an absolute constant $c>0$.  This remains an important open problem in number theory. The fact that $h(2^{1/n})= (\log 2)/n$ shows that this expected lower bound of order $1/\deg(\alpha)$ for $h(\alpha)$ is sharp. Consequently, a crucial avenue of inquiry is to determine interesting subsets of $\overline{\Q}$, on which the height is uniformly bounded below.\\

A subset $S\subseteq \overline{\Q}$ is said to have the \textit{Northcott property (N)} if for any $c>0$, the set
\begin{equation*}
    \big\{\alpha \in S \, | \, \alpha\neq 0 \text{ and } h(\alpha) < c\big\}
\end{equation*}
is finite. Closely related is the \textit{Bogomolov property (B)}. We say that a set $S \subset \overline{\Q}$ satisfies the property (B) if there exists a constant $c > 0$ such that 
\begin{equation*}
    \big\{\alpha \in S \, | \, \alpha\neq 0, \alpha \text{ not a root of unity} \text{ and } h(\alpha) < c\big\}
\end{equation*}
is empty. Clearly, if $S$ satisfies (N), it also satisfies (B).\\

In \cite{Amoroso}, F. Amoroso and R. Dvornicich  proved that $\Q^{ab}$, the maximal abelian extension of $\Q$, satisfies property (B). They showed that for non-zero $\alpha\in \Q^{ab}$, not a root of unity,
\begin{equation*}
    h(\alpha) > \frac{\log 5}{12}.
\end{equation*}
This was generalized to $K^{ab}$ by F. Amoroso and U. Zannier \cite{Amoroso II} for any number field $K$. Earlier, in 1973, A. Schinzel \cite{Schinzel} obtained property (B) for the set $\Q^{tr}$ of totally real numbers. More precisely, for $\alpha \in \Q^{tr} \setminus \{\pm 1 \}$, he proved that
\begin{equation*}
h(\alpha) \geq \frac{1}{2} \log \left( \frac{1+\sqrt{5}}{2} \right).
\end{equation*}
\medskip

Another family of infinite extensions of $\Q$ for which we know property (B) are totally $p$-adic fields of type $(e,f)$, i.e., infinite Galois extensions of $\Q$ with finite local degree over a prime $p$. This is the famous theorem of E. Bombieri and U. Zannier \cite{BZ}.

\begin{thmx}[Bombieri-Zannier]\label{B-Z}
Let $\rL$ over $\Q$ be an infinite Galois extension. Define
\begin{equation*}
    S(\rL):= \bigg\{p \,\text{ prime} \, \big| \, [\rL_{v} : \Q_p] < \infty \,\, \text{for some}\,\, v \,\, \text{above}\,\, p\bigg\}.
\end{equation*}
Then 
\begin{equation}\label{BZ-eq}
    \liminf_{\alpha\in \rL} \,\, h(\alpha) \,\, \geq \frac{1}{2} \, \mathlarger{\mathlarger{\sum}}_{p\in S(\rL)}  \, \frac{\log p}{e_p \, \left( p^{f_p} + 1 \right)},
\end{equation}
where $e_p$ is the ramification index and $f_p$ is the residual degree of $\rL_{v}/\Q_p$.
\end{thmx}
Thus, if $S(\rL)$ is non-empty, then $\rL$ satisfies property (B). Furthermore, if the right-hand side in \eqref{BZ-eq} diverges, then $\rL$ satisfies property (N). An explicit example where the RHS diverges can be found in S. Checcoli and A. Fehm \cite{Checcoli}. Note that if $p\in S(\rL)$, then for any $\alpha\in \rL$, all the conjugates of $\alpha$ lie in a finite extension $L_{v}$ of $\Q_p$. This can be considered as a non-archimedean analog of Schinzel's result on $\Q^{tr}$. There have been refinements of Theorem \ref{B-Z} in recent times. In 2015, P. Fili and C. Petsche \cite{Fili Petshe} used methods of potential theory to obtain lower bounds on the height of algebraic numbers, all whose conjugates lie in certain local fields. This was later extended by P. Fili and I. Pritsker \cite{Fili Pritsker} to obtain new lower bounds for height of algebraic numbers all of whose conjugates lie in real intervals and $p$-adic discs. \\

All the above results give property (B) for infinite Galois extensions of $\Q$. In this paper, we establish property (B) for certain infinite extensions of $\Q$, which are not necessarily Galois. This is a rare instance where property (B) is proved for infinite non-Galois extensions. One of the main challenges in non-Galois extensions is the absence of natural invariants such as $e_p$ and $f_p$, which were crucial in \eqref{BZ-eq}. To overcome this, we introduce the theory of asymptotically positive extensions, inspired by the study of asymptotically exact families by M. A. Tsfasman and S. G. Vl\u{a}du\c{t} \cite{TV}.\\

For a number field $K/\Q$ and a rational prime power $q=p^f$, define
\begin{equation*}
    \rN_q(K) := \text{ the number of prime ideals of } K \text{ with norm } q.
\end{equation*}
An infinite extension $\rL/\Q$ can be written as a tower of number fields
$$
\rL \supsetneq \cdots \supsetneq L_m \supsetneq L_{m-1} \supsetneq \cdots \supsetneq L_1 = \Q,
$$
where $L_i/\Q$ are finite extensions. Define
$$
    \psi_q(\rL) = \psi_q := \lim_{i\to \infty} \frac{\mathcal{N}_q(L_i)}{[L_i:\Q]}.
$$
This limit exists and is well-defined, i.e., independent of the tower $\{L_i\}$ (as shown in Proposition \ref{well-defined}) and $0\leq \psi_q \leq 1$. For instance, if a prime $p$ splits completely in $\rL$, then $\psi_p =1$ and $\psi_{p^f} =0$ for all $f>1$. If $\psi_q > 0$ for some prime power $q$, we call the extension $\rL$ to be \textit{asymptotically positive}.\\ 

Inspired by Theorem \ref{B-Z}, the authors proposed the following conjecture in \cite[Conjecture 2.2]{dixit-kala}.

\begin{conjecture}\label{A-S}
     Let $\rL / \Q$ be an infinite extension of $\Q$. Then,
    \begin{equation}\label{conjecture-SD}
    \liminf_{\alpha \in \rK} h(\alpha) \geq \frac{1}{2} \sum_{q} \psi_q \, \frac{\log q}{q+1},
    \end{equation}
    where $q$ runs over all prime powers. Hence, if $\rL$ is asymptotically positive, then $\rL$ has property (B).
\end{conjecture}

\medskip

For infinite Galois extensions, the RHS in \eqref{conjecture-SD} is same as the RHS in \eqref{BZ-eq}. Indeed, let $\rL/\Q$ be an infinite Galois extension and $p\in S(\rL)$, then 
\begin{equation*}
    \frac{\log p}{e_p (p^{f_p} + 1)} = \frac{\log p^{f_p}}{e_p f_p (p^{f_p } + 1)} = \psi_{p^{f_p}} \, \left( \frac{\log p^{f_p}}{p^{f_p}+1} \right), 
\end{equation*}
with $e_p$ and $f_p$ as before. The advantage of this formulation in terms of $\psi_q$'s enables us to conjecture the analog of Theorem \ref{B-Z} for infinite non-Galois extensions. For totally $p$-adic fields, Bombieri and Zannier \cite[Example 3]{BZ} demonstrated that \eqref{BZ-eq} is close to being optimal. More precisely, let $S$ be a finite set of primes and $\rL$ be the field of all totally $p$-adic algebraic numbers for $p\in S$. Then, they showed that
\begin{equation*}
    \sum_{p\in S} \frac{\log p}{p-1}\geq \liminf_{\alpha \in \rL} h(\alpha) \geq \frac{1}{2} \sum_{p\in S} \frac{\log p}{p+1}.
\end{equation*}
Therefore, we also expect the lower bound in Conjecture \ref{A-S} to be close to optimal.\\ 

In this paper, we partially resolve this conjecture as follows.

% Our first main theorem is to prove property (B) for the rings of integers of asymptotically positive extensions.

\begin{theorem}\label{main-theorem}
Let \( \rL / \mathbb{Q} \) be an infinite extension of \( \mathbb{Q} \), and let \( \mathcal{O}_{\rL} \) be the ring of algebraic integers of \( \rL \).
\begin{enumerate}[(a)]
    \item The following inequality holds:
    \[
        \liminf_{\alpha \in \mathcal{O}_{\rL}} h(\alpha)
        \geq \frac{1}{2} \sum_q \psi_q \frac{\log q}{q},
    \]
where \( q \) runs over all prime powers. 

\medskip

    \item If $\psi_q(\rL) > 0$ for $q=p^f$, there exists a \( \lambda \), depending on \( p \) and \( \rL \), such that
    \[
        \liminf_{\alpha \in \rL \setminus \mu_\infty} h(\alpha)
        \geq \frac{\log q}{p^{\lambda}(q+1)}.
    \]
\end{enumerate}
Therefore, if \( \rL \) is asymptotically positive, then \( \rL \) has property (B). Moreover, if the sum in (a) diverges, then \( \mathcal{O}_{\rL} \) has property (N).
\end{theorem}

\medskip

Although Conjecture \ref{A-S} still remains open, the bound in Theorem \ref{main-theorem} (a) establishes it for  algebraic integers. For algebraic non-integers, we obtain a weaker lower bound in Theorem \ref{main-theorem} (b). Nevertheless, this proves property (B) for asymptotically positive extensions.

\medskip
\noindent

\begin{remark*}
    It is important to note that  asymptotically positive extensions contain finitely many roots of unity. This follows immediately from Theorem \ref{main-theorem} (a). Alternatively, one can see this as follows. Let $\rL$ be an asymptotically positive extension with $\psi_{p^f}(\rL) > 0$. Suppose there exists an infinite tower of cyclotomic subfields of $\rL$, say $\rK=\bigcup_{i=1}^{\infty} K_i$ with $ K_i = \Q(\zeta_{n_i})$. Clearly, $\psi_{p^f}(\rK) > 0$. For sufficiently large $i$, the inertia degree of $p$ in $K_i$ must equal $f$. This implies that $p^f \equiv 1  \bmod n_i$ for all large enough $i$, which is a contradiction.
\end{remark*}
\medskip
\noindent

% But the parameter $\lambda$ appearing in the lower bound in Theorem \ref{main-theorem} (b) could be very large leading to a comparatively weaker result than Theorem \ref{main-theorem} (a).

\bigskip

 % Thus, for Galois extensions, Theorem \ref{main-theorem} is equivalent to Theorem \ref{B-Z}. 
 We emphasize that Theorem \ref{main-theorem} (a) is not a straightforward generalization of Theorem \ref{B-Z}. When $\rL / \Q$ is Galois, the condition $[\rL_{v} : \Q_p] <\infty$ is equivalent to the existence of a finite Galois extension $L/ \Q$ with $L\subset \rL$ such that all the prime ideals above $p$ in $L$ split completely in $\rL$. However, if $\rL/\Q$ is not Galois, one may not have such eventual complete splitting, yet it is still possible to have $\psi_q(\rL)>0$ for some prime power $q$. In this case, applying Theorem \ref{B-Z} to the Galois closure of $\rL/\Q$ would lead to poor bounds on $h(\alpha)$. We illustrate this with an example below.\\

% Applying Theorem \ref{B-Z} to the Galois closure of $\rL/\Q$ often leads to poor bounds on $h(\alpha)$. For $\rL / \Q$ Galois, the condition $[\rL_{v} : \Q_p] <\infty$ is equivalent to saying that there exists a finite Galois extension $L/ \Q$ with $L\subset \rL$ such that all the prime ideals above $p$ in $L$ split completely in $\rL$. This is not the case for non-Galois extensions, where we do not have eventual complete splitting. We illustrate this with an example below.\\ 

\noindent
{\bf Example.} Fix a rational prime $p$. We construct an infinite extension $\rL/\Q$ such that $\psi_p > 0$, but $p\notin S(\widetilde{\rL})$, where $\widetilde{\rL}$ is the Galois closure of $\rL$ over $\Q$. Thus, Theorem \ref{B-Z} is not applicable to $\widetilde{\rL}$, where as Theorem \ref{main-theorem} gives property (B) for $\mathcal{O}_{\rL}$.\\

\noindent Let $K/ \Q$ be a number field with distinct prime ideals $P_1, P_2,..., P_s$ and $ Q_1, Q_2,..., Q_t$. For any integer $n \geq 2$, there exists a degree $n$ extension $L$ of $K$ such that $P_1, P_2,..., P_s$ split completely and $Q_1, Q_2,..., Q_t$ totally ramify over $L$. {% a reference needed}
Let $L_1/\Q$ be a degree $4$ extension where $p$ splits completely into ${\rp}_1, \rp_2, \rp_3, \rp_4$. Construct $L_2/L_1$ a degree $9$ extension such that $\rp_1, \rp_2, \rp_3$ split completely and $\rp_4$ ramifies, but not totally in $L_2$. This is achieved by first constructing $L_2'/L_1$ of degree $3$  such that $\rp_1, \rp_2, \rp_3$ split completely and $\rp_4$ ramifies completely. Then, we construct $L_2/L_2'$ where all primes in $L_2'$ above $\rp_1, \rp_2$ and $\rp_3$ split completely and the prime above $\rp_4$ also splits completely. Inductively construct $L_{i+1}/L_i$, an extension of degree $p_{i+1}^2$, where $p_i$ is the $i$-th prime number, such that $\rN_p (L_i) - \frac{\rN_p(L_i)}{{p_i}^2 }$ of the prime ideals of $L_i$ with norm $p$ split completely in $L_{i+1}$ and the remaining prime ideals with norm $p$ ramify in $L_{i+1}$, but not completely. For this tower $\rL = \{L_i\}$, note that
$$
\psi_p = \lim_{i\to\infty} \frac{\rN_p(L_i)}{[L_i:\Q]} \geq \prod_{p_i} \left( 1- \frac{1}{{p_i}^2} \right) = \frac{1}{\zeta(2)},
$$
where the product above runs over all rational primes $p_i$. Thus, we have $\psi_p > 0$. However, since the ramification degree of $p$ is unbounded, $p \notin S(\widetilde{\rL})$.\\

\medskip
In special cases, where there is minimal ramification of a prime in the infinite extension, it is feasible to obtain finer lower bounds, which could potentially surpass Theorem \ref{main-theorem} (a). In this context, we define an infinite extension $\rL/\Q$ to be \textit{almost totally split at a prime} $p$ if $\psi_p =1$ and \textit{almost unramified asymptotically positive at a prime $p$} if
\begin{equation*}
    \mathlarger{\mathlarger{\sum}}_{{f\geq 1}} \, f \, \psi_{p^f} = 1.
\end{equation*}

For such extensions, we obtain the following. 

\begin{proposition}\label{almost-split}
    Let $S$ denote a set of rational primes. Let $\rL/\Q$ be an infinite extension which is almost totally split at all primes $p\in S$. Then
    \begin{equation*}
    \liminf_{\alpha \in \rL} h(\alpha) \geq \frac{1}{2} \, \sum_{p\in S} \frac{\log p}{p+1}.
    \end{equation*}
\end{proposition}

In other words, for the almost totally split case, we obtain Theorem \ref{main-theorem} (a) over all algebraic numbers and not just algebraic integers. In the special case of the totally $p$-adic extension $\rL/\Q$, which is the maximal extension of $\Q$ where $p$ splits completely, Proposition \ref{almost-split} gives a slightly weaker result than the best known result obtained by L. Pottmeyer in \cite{pottmeyer}. He showed that if $\alpha$ is totally $p$-adic and not a $(p-1)$-th root of unity, then
\begin{equation}\label{Pottmeyer}
    h(\alpha) \geq \frac{\log (p/2)}{p+1}.
\end{equation}
However, if more than one prime split completely in $\rL$, then Pottmeyer's method may yield weaker bounds than Proposition \ref{almost-split}.\\

\medskip

\noindent We also establish a refined lower bound for almost unramified asymptotically positive extensions. 
\begin{theorem}\label{almost-unramified}
    Let $\rL/\Q$ be an infinite extension which is almost unramified asymptotically positive at a prime $p$. Then
    \begin{equation*}
    \liminf_{\alpha \in \rL} \, h(\alpha) \geq \frac{1}{2}\,\mathlarger{\mathlarger{\sum}}_{{f\geq 1}} \, f^3 \, \left(\psi_{p^f}\right)^2 \frac{\log p}{p^f}.
    \end{equation*}
\end{theorem}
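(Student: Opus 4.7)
The plan is to refine the argument behind Theorem \ref{main-theorem} by exploiting the almost unramified hypothesis at the local level. The condition $\sum_{k\geq 1} k\psi_{p^k} = 1$ says precisely that, asymptotically along any defining tower $\{L_i\}$, every place above $p$ becomes unramified, and that $k\psi_{p^k}$ records the fraction of the degree $[L_i:\Q]$ carried by primes above $p$ of residue degree $k$. The starting point is the standard decomposition
\[
h(\alpha) = \frac{1}{[L_i:\Q]} \sum_{v \in M_{L_i}} d_v \log^+ |\alpha|_v
\]
for $\alpha \in L_i$, together with Bombieri--Zannier type local lower bounds on the contribution from places above $p$.

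First, I would revisit the local Bombieri--Zannier inequality used to establish Theorem \ref{main-theorem}, rewriting it in a form that keeps track of the interaction between $\alpha$ and the full Frobenius orbit of size $k$ at an unramified prime $\mathfrak{p}$ of residue degree $k$. In the almost unramified regime, the aim is to derive a refined inequality whose contributions are indexed by ordered pairs of primes above $p$ of the same residue degree, rather than by single primes. The count of such ordered pairs scales like $(\psi_{p^k}[L_i:\Q])^2$ along the tower, the Frobenius orbit structure supplies a factor of $k$ from summing local inputs across the orbit, and the identity $\log p^k = k\log p$ absorbs another factor of $k$, which together should account for the target weight $k^3 \psi_{p^k}^2 \frac{\log p}{2(p^k+1)}$.

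Second, I would use the almost unramified hypothesis to show that the contribution of ramified primes above $p$ is negligible along the tower: the total degree carried by ramified primes above $p$ is $[L_i:\Q] - \sum_k k\,\mathcal{N}_{p^k}(L_i)$, whose normalized limit is $1 - \sum_k k\psi_{p^k} = 0$ by assumption. This permits me to work as if the tower were genuinely unramified at $p$, up to vanishing error. Summing the refined local bounds over $k$ and passing to $i\to\infty$ would then yield the claimed inequality.

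The main obstacle is the first step: isolating the correct sharp local inequality that legitimately replaces the linear sum in $\psi_{p^k}$ of Theorem \ref{main-theorem} by a quadratic sum in $\psi_{p^k}^2$ carrying the extra $k^2$ factor. The natural mechanism is a second-moment or Cauchy--Schwarz style argument on the distribution of conjugates of $\alpha$ at primes above $p$, made clean by the almost unramified assumption; pinning down the right uniformity along the tower, and making the pairing of primes rigorous without losing the $k^3$ weight, is where the real technical work lies.
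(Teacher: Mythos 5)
Your overall strategy --- show that ramified primes above $p$ contribute negligibly, then extract a second-moment (Cauchy--Schwarz) refinement of the Bombieri--Zannier bound on the conjugates of $\alpha$ above $p$, sum over $k$, and pass to the limit along the tower --- is the right direction and matches the paper in outline; you also correctly single out the main obstacle. But the mechanism you sketch does not close that gap. Your accounting ($\psi_{p^k}^2$ from ordered pairs of primes, one $k$ from a Frobenius orbit, another $k$ from $\log p^k = k\log p$) multiplies out to $k^2\psi_{p^k}^2$, not $k^3$; and the last factor is in any case spurious, since $D(f)$ is a rational integer, so each unit of its $p$-adic valuation contributes $\log p$ to $\log|D(f)|$, never $\log p^k$. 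More importantly, ``indexing by ordered pairs of primes'' does not on its own produce a valid lower bound on the valuation of the discriminant.

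The missing idea is the uniqueness of the degree-$k$ unramified extension of $\mathbb{Q}_p$. The splitting field $\widetilde{L}_\omega$ of the minimal polynomial $f$ of $\alpha$ over $\mathbb{Q}_p$ contains a unique copy of it, $L_{\nu_k}$, and therefore \emph{every} root of $f$ coming from an unramified prime of $\mathbb{Q}(\alpha)$ of residue degree $k$ --- that is, $|S_k| = k\,\mathcal{N}_{p^k}(\mathbb{Q}(\alpha))$ roots in all, since each such prime yields a degree-$k$ factor of $f$ over $\mathbb{Q}_p$, which has $k$ roots --- lies inside the single field $L_{\nu_k}$. That pooling is what makes a single Cauchy--Schwarz legitimate: sort these $|S_k|$ roots into the $p^k$ residue classes of $\mathbb{F}_{p^k}$ together with one bucket for roots of negative valuation; any two roots in the same bucket contribute at least $1$ to the valuation of $D(f)$, and Cauchy--Schwarz over the $p^k+1$ buckets gives
\begin{equation*}
\nu_p\bigl(D(f)\bigr) \;\geq\; \sum_{k} \frac{|S_k|^2}{p^k+1} + O(n_L).
\end{equation*}
The technical point that makes this rigorous --- and that resolves exactly the uniformity worry you raise --- is that unramifiedness of $L_{\nu_k}/\mathbb{Q}_p$ forces $e(\widetilde{L}_\omega\,|\,L_{\nu_k}) = e(\widetilde{L}_\omega\,|\,\mathbb{Q}_p)$, so the local valuations computed inside each $L_{\nu_k}$ convert coherently into the single global $p$-adic valuation $\nu_p$. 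From there, applying Mahler's inequality and letting the degree tend to infinity along the tower completes the argument.
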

\medskip

\begin{remark*}
    Using the same method, Proposition \ref{almost-split} and Theorem \ref{almost-unramified} can be extended to the infinite extensions $\rL/K$, where $K$ has class number one. In this case, let $S$ be the set of prime ideals $\mathfrak{p}$ in $\mathcal{O}_K$ which almost totally splits in $\rL$. Then
    $$
    \liminf_{\alpha \in \rL} h(\alpha) \geq \frac{1}{2} \sum_{\mathfrak{p} \in \mathcal{O}_K} \frac{\log {\rm Norm}(\mathfrak{p})}{{\rm Norm}(\mathfrak{p}) + 1}.
    $$
    Equivalently, let $\rp$ be a prime ideal in $K$ which is almost unramified in $\rL$, a similar statement as Theorem \ref{almost-unramified} can be deduced.
\end{remark*}
\medskip

It is worth highlighting that the proof of Theorem \ref{B-Z} by Bombieri-Zannier involves obtaining lower bounds on the discriminant of the minimal polynomial of $\alpha$ and using Mahler's inequality (see Theorem \ref{mahler}). Applying the same method directly fails to give any meaningful result in the non-Galois setting. The  novelty in our proof is building the theory of relative heights and establishing a relative Mahler's inequality, which helps in circumventing the obstructions that are encountered otherwise.\\

Another family of algebraic numbers satisfying property (B) can be obtained from the angular equidistribution theorem by Amoroso-Mignotte \cite{Amoroso-Mignotte} (also see M. Mignotte \cite{mignotte} and Y. Bilu \cite{Bilu}).  

\begin{thmx}[\cite{masser-book}, Theorem 15.2]\label{equidistribution}
    For $\alpha \in \overline{\Q}$, let $d = [\Q(\alpha):\Q]$. For any $\theta$ with $0\leq \theta \leq 2\pi$ the number $n$ of conjugates of $\alpha$ in any fixed sector, based at the origin, of angle $\theta$ satisfies
    \begin{equation*}
        \left | n - \frac{\theta}{2\pi} d\right| \leq 24 \left( d^{2/3} \left(\log 2d\right)^{1/3} + d h(\alpha)^{1/3}\right).
     \end{equation*}
\end{thmx}
\noindent In other words, the set of all $\alpha$ whose conjugates do not have angular equidistribution satisfies property (B). If $\rL$ is asymptotically positive, for all but finitely many $\alpha \in \rL$, a positive proportion of their conjugates lie in a finite local extension $L_{v}/\Q_p$. Hence, Theorem \ref{main-theorem} can be regarded as a $p$-adic equidistribution result in the spirit of Theorem \ref{equidistribution}. 
% In \cite{Plessis}, F. Amoroso and A. Plessis established a quantitative equidistribution theorem for sequences of subsets that need not be Galois-stable. This result can be viewed as a non-Galois analog of Theorem \ref{equidistribution}.
\medskip

\subsection{Lower bound for canonical height of points on elliptic curves}
Let $E$ be an elliptic curve defined over a number field $K$ given by the Weierstrass equation 
$
y^2 = x^3 + A\,x + B,
$ where $A, \, B \in K$.
For any extension $L/ K$, denote by $E(L)$ the set of points $P \in E(\overline{K})$ such that $x(P),  \, y(P) \in L$. Define the logarithmic Weil height of $P$ as
\begin{equation*}
    h(P) := h(x(P)).
\end{equation*}
Although the logarithmic Weil height is easy to define, it is not compatible with respect to the group operation on elliptic curves. With this in mind, N\'{e}ron and Tate introduced a height function, namely the canonical height, defined as
\begin{equation*}
    \widehat{h}(P) = \lim_{n \rightarrow \infty} \frac{h([2^n] \cdot P)}{4^n}.
\end{equation*}
The canonical height $\widehat{h}$ is a quadratic form on $E(\overline{K})$.
\medskip

In the case of elliptic curves, torsion points can be thought of as analogs of roots of unity. Furthermore, $\widehat{h}(P)=0$ if and only if $P$ is a torsion point. Thus the analog of Kronecker's theorem holds for canonical height. It is then natural to consider lower bounds on $\widehat{h}$ for non-torsion points on elliptic curves. In this aspect, Lehmer's conjecture has been formulated in \cite{Anderson and Masser}.\\

A set $S$ of points in $E(\overline{K})$ is said to have the Bogomolov property (B) if there exists a constant $c>0$ such that for all non-torsion points $P\in S$
\begin{equation*}
    \widehat{h}(P) \geq c.
\end{equation*}
Let $\rL/\Q$ be an asymptotically positive infinite extension. By Theorem \ref{main-theorem}, we know that $\rL$ satisfies property (B). It is natural to ask whether the same holds in the case of $E(\rL)$ when $K\subset \rL$. We answer this in the affirmative below.

\begin{theorem}\label{elliptic-analogue}
    Let $E/K$ be an elliptic curve. Let $\rL/\Q$ be an asymptotically positive extension containing $K$. Then $E(\rL)$ satisfies property (B).
\end{theorem}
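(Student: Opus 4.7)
The plan is to adapt the proof of Theorem~\ref{main-theorem} to the elliptic setting by replacing the logarithmic Weil height with the N\'eron--Tate canonical height $\widehat{h}$ and the local Mahler contributions $\log^{+}|\alpha|_v$ with the N\'eron local heights $\widehat{\lambda}_v$, so that
\begin{equation*}
\widehat{h}(P) = \sum_{v \in M_F} \widehat{\lambda}_v(P)
\end{equation*}
for $P \in E(F)$ and $F \subseteq \rL$ a number field. The elliptic counterpart of the factor $(q+1)^{-1}$ in \eqref{thm-ineq} arises from the Hasse bound $|E(\mathbb{F}_q)| \leq q + 2\sqrt{q} + 1$ on the reduction of $E$ modulo a prime of good reduction with residue field $\mathbb{F}_q$, which plays the role that $|\mathbb{F}_q^{\times}| = q-1$ played in the multiplicative setting.

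First, fix a non-torsion $P \in E(\rL)$, a finite Galois extension $F/K$ contained in the Galois closure of $\rL/K$ with $K(P) \subseteq F$, and denote by $P_1, \ldots, P_d$ the Galois conjugates of $P$ over $K$ inside $F$. Because $\widehat{h}$ is a positive semi-definite quadratic form on $E(\overline{K})$, the parallelogram identity yields
\begin{equation*}
\sum_{i,j=1}^{d} \widehat{h}(P_i - P_j) = 2d \sum_{i=1}^{d} \widehat{h}(P_i) - 2\widehat{h}\!\left(\sum_{i=1}^{d} P_i\right) \leq 2d^2\,\widehat{h}(P),
\end{equation*}
so it suffices to establish a matching lower bound $\sum_{i<j}\widehat{h}(P_i - P_j) \geq c\, d^2$ with $c>0$ independent of $d$.

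Next, decompose $\widehat{h}(P_i - P_j) = \sum_v \widehat{\lambda}_v(P_i - P_j)$ and isolate the finite primes $v$ of $F$ of good reduction with $N_v = q$ for those prime powers $q$ satisfying $\psi_q > 0$. At such a prime, the reductions $\widetilde{P_1}, \ldots, \widetilde{P_d}$ lie in a group of size at most $q + 2\sqrt{q} + 1$, so by pigeonhole at least $d^2/(q + 2\sqrt{q} + 1)$ ordered pairs $(i,j)$ satisfy $\widetilde{P_i} = \widetilde{P_j}$. For every such pair, $P_i - P_j$ lies in the kernel of reduction at $v$ and a standard formal-group computation gives a lower bound $\widehat{\lambda}_v(P_i - P_j) \geq \tfrac{1}{2}\log N_v / [F:\Q]$ with the normalization of this paper. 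Counting primes of $F$ of norm $q$ via the invariant $\psi_q$ along the tower defining $\rL$, and summing over $q$ with $\psi_q > 0$ exactly as in the proof of Theorem~\ref{main-theorem}, produces the required bound $\sum_{i<j}\widehat{h}(P_i - P_j) \geq c\, d^2$ with $c > 0$, and hence $\widehat{h}(P) \geq c/2 > 0$, establishing property (B) for $E(\rL)$.

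The main obstacle is constructing the elliptic analogue of the relative Mahler inequality introduced in this paper, which in the multiplicative case couples $\prod_{i\neq j}(\alpha_i - \alpha_j)$ with a telescoping tower of intermediate subfields and removes the requirement that $\rL/\Q$ be Galois. Its elliptic translation requires a compatible decomposition of $\sum_{i\neq j}\widehat{\lambda}_v(P_i - P_j)$ across the tower, in a form that interacts properly with the N\'eron local heights. Secondary technical points include controlling the finitely many primes of bad reduction of $E/K$, where $\widehat{\lambda}_v$ may be negative but is uniformly bounded below and contributes only $O(1)$ to the sum, and the archimedean places, which are handled via standard estimates for the local Green's function on $E(\C)$.
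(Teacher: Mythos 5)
Your proposal has a fatal gap at exactly the point that motivates this paper. You fix a non-torsion $P \in E(\rL)$ and pass to a finite Galois extension $F/K$ containing all the $K$-conjugates of $P$, but since $\rL/K$ is not assumed Galois, the minimal such $F$ is the Galois closure of $K(P)/K$, which in general lies outside $\rL$ and only inside $\widetilde{\rL}$, the Galois closure of $\rL/K$. The invariant $\psi_q$ controls prime counting only for number fields contained in $\rL$; it tells you nothing about the splitting or residue degrees of primes in $F$. The paper's own introductory example exhibits a tower $\rL$ with $\psi_p > 0$ whose Galois closure $\widetilde{\rL}$ has unbounded ramification at $p$, so that $p \notin S(\widetilde{\rL})$. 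In such a situation your step ``counting primes of $F$ of norm $q$ via the invariant $\psi_q$ along the tower defining $\rL$'' is not justified, and the non-archimedean gain you need can disappear. Passing to conjugates is precisely the mechanism that reintroduces the Galois obstruction the paper is constructed to avoid.

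The paper's proof of this theorem never invokes conjugates of $P$. It works with an \emph{arbitrary} finite set $Z = \{P_1, \ldots, P_N\}$ of $N$ distinct points in $E(L)$ for a number field $L \subset \rL$, bounds the \emph{average} height $\widehat{h}(Z)$ from below via the parallelogram law applied to $\sum_{i \neq j} \widehat{h}(P_i - P_j)$, and then observes that at most $N-1$ points of $E(L)$ can have $\widehat{h}$ below that lower bound (else those very points would form a bad $Z$). Letting $N \to \infty$ and $L$ exhaust $\rL$ yields property (B), and everything stays inside $\rL$. The ingredient that replaces Galois invariance at the archimedean places is the Elkies-type inequality (Proposition \ref{Elkies}), which for arbitrary distinct $P_1, \ldots, P_N \in E(\C)\setminus\{O\}$ gives
\begin{equation*}
\sum_{\substack{1 \leq i,\,j \leq N \\ i \neq j}} \lambda_\nu(P_i - P_j) \;\geq\; -\tfrac{1}{2} N \log N - b(E)\,N,
\end{equation*}
so the archimedean contribution is only $O(N \log N)$ and cannot overwhelm the $\Omega(N^2)$ gain at the finite places; an analogous non-archimedean lower bound (Proposition \ref{nElkies}, via Fourier analysis of $B_2$) handles the split multiplicative primes. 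You identify ``an elliptic analogue of the relative Mahler inequality'' as the main obstacle, but in fact the paper's elliptic argument bypasses the relative Mahler machinery entirely: the averaging-over-$Z$ scheme together with Elkies' inequality plays the role that the relative discriminant computation plays in the Weil-height case. Your good-reduction pigeonhole estimate via Hasse is correct and matches the paper, but without the change of global strategy the proposal does not close.
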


For a totally $p$-adic extension $\rL$ of type $(e,f)$, M. Baker and C. Petsche \cite{Baker-2} established property (B) for $E(\rL)$ under the condition that $E$  has semi-stable reduction at the prime $p$. We establish property (B) for $E(\rL)$ over asymptotically positive extensions independent of the reduction type of $E$ at $p$.\\

\medskip

This theme of lower bounds on canonical height of elliptic curves  is discussed in Section \ref{canonical_height} and an explicit lower bound for $\widehat{h}(P)$ is obtained in Theorem \ref{Elliptic Case I}.\\

\medskip

The paper is organized as follows. In Section \ref{prelims}, we recall basic definition and some necessary results towards the proof of our main theorems. We develop the theory of relative height and deduce a relative Mahler's inequality in Section \ref{relative-height}. Basic properties of asymptotically positive extensions are obtained in Section \ref{asymp-positive} and we prove Theorem \ref{main-theorem} in Section \ref{APE}. In Section \ref{ATS-AU}, we prove Proposition \ref{almost-split} and Theorem \ref{almost-unramified}. We describe the analog of this problem for elliptic curves in Section \ref{canonical_height}. Developing the necessary tools for local heights in Section \ref{local-height}, we prove Theorem \ref{Elliptic Case I} and hence Theorem \ref{elliptic-analogue} in Section \ref{proof-elliptic}.
\bigskip

\section{\bf Preliminaries}\label{prelims}
\medskip

Let $K/\Q$ be a number field. For $\alpha \in K^*$, the absolute logarithmic height or logarithmic Weil height is defined as
\begin{equation*}
    h(\alpha) = \sum_{v \in M_K} \log^+ |\alpha|_{v},
\end{equation*}
where $M_K$ is the set of all places of $K$, $\log^+ x = \max( 0, \log x)$ and $|\alpha|_{v}$ is the normalized valuation on $\alpha$ defined as:
\begin{equation*}
    |\alpha|_{v} := 
    \begin{cases}
        \left( N\mathfrak{p}\right)^{- \frac{ord_{\mathfrak{p}}(\alpha)}{[K \,: \, \Q]}}, & \text{ if } v \text{ is non-archimedean corresponding to the prime ideal } \mathfrak{p}, \\
        | \sigma(\alpha)|^{\frac{[K_{v}:\R]}{[K \,: \,\Q]}}, & \text{ if } v \text{ is archimedean corresponding to the embedding } \sigma \text{ of } K.\\
    \end{cases}
\end{equation*}

One can also define the logarithmic Weil height through the Mahler measure. Let $f(x) = a_n x^n + \cdots + a_1 x + a_0 \in \Z[x]$ be the minimal polynomial of $\alpha$. Then, its Mahler measure is defined as
\begin{equation*}
    M(\alpha) := |a_n| \prod_{i} \max(1,|\alpha_i|),
\end{equation*}
where $\alpha_i$'s denote the Galois conjugates of $\alpha$. Mahler measure is connected to the height of $\alpha$ by the relation
\begin{equation*}
    \log M(\alpha) =  h(\alpha) \, [\Q(\alpha): \Q].
\end{equation*}
Thus, Lehmer's conjecture can be stated as
\begin{equation*}
    \log M(\alpha) \geq c
\end{equation*}
for all non-zero $\alpha \in \overline{\Q}$ that are not roots of unity, where $c > 0$ is an absolute constant. \\

While this conjecture remains unresolved, there has been significant progress in recent times. The interested reader may refer to the excellent survey articles \cite{Smyth}, \cite{Verger-Gaugry} and the book by D. Masser \cite{masser-book} for a comprehensive account of this problem.\\

We now recall the classical Mahler's inequality \cite{Mahler}.
\begin{theorem}[Mahler]\label{mahler}
    Let $f(x) = a_n x^n + \cdots + a_1 x + a_0 \in \C[x]$ be a polynomial with roots $\alpha_1, \alpha_2, \cdots, \alpha_n$. Let
    $$
        D:= a_n^{2n-2} \, \prod_{i>j} (\alpha_i - \alpha_j)^2
    $$
    be its discriminant. Then, 
    $$
        |D| \leq n^n M(f)^{2n-2},
    $$
    where $M(f)$ denotes the Mahler measure of $f$, given by
    $ M(f) := |a_n| \prod_{i} \max(1,|\alpha_i|)$.
\end{theorem}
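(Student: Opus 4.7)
The plan is to prove Mahler's inequality by recognizing the discriminant as a scaled Vandermonde determinant and then applying Hadamard's inequality. First I would recall that if $V$ is the Vandermonde matrix $V_{ij} = \alpha_i^{j-1}$ for $1 \le i,j \le n$, then $\det V = \prod_{i<j}(\alpha_j - \alpha_i)$, so that
\begin{equation*}
    D \;=\; a_n^{2n-2}\prod_{i<j}(\alpha_i-\alpha_j)^2 \;=\; a_n^{2n-2}(\det V)^2.
\end{equation*}
Therefore $|D| = |a_n|^{2n-2}|\det V|^2$, reducing the problem to bounding $|\det V|^2$ in terms of the roots.

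Next I would apply Hadamard's inequality, which states that for any complex $n \times n$ matrix $A$ one has $|\det A|^2 \le \prod_{i=1}^n \sum_{j=1}^n |A_{ij}|^2$. Applied to $V$ this yields
\begin{equation*}
    |\det V|^2 \;\le\; \prod_{i=1}^n \sum_{j=0}^{n-1} |\alpha_i|^{2j}.
\end{equation*}
The elementary bound $\sum_{j=0}^{n-1} |\alpha_i|^{2j} \le n\,\max(1,|\alpha_i|)^{2(n-1)}$ holds in both cases: if $|\alpha_i|\le 1$, each term is $\le 1$ and the right-hand side equals $n$; if $|\alpha_i|>1$, the largest term is $|\alpha_i|^{2(n-1)}$ and there are $n$ terms.

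Combining these estimates gives
\begin{equation*}
    |D| \;\le\; |a_n|^{2n-2} \cdot n^n \prod_{i=1}^n \max(1,|\alpha_i|)^{2(n-1)} \;=\; n^n \left(|a_n|\prod_{i=1}^n \max(1,|\alpha_i|)\right)^{\!2n-2} \;=\; n^n M(f)^{2n-2},
\end{equation*}
which is the desired inequality. There is no real obstacle here: the only subtlety is the uniform row-sum bound, which is handled by the simple case split above. The proof is completely elementary and does not require the polynomial to have integer coefficients, so it applies to arbitrary $f \in \mathbb{C}[x]$ as stated.
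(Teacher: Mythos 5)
Your proof is correct and uses exactly the same ideas as the paper: the paper cites Mahler's original argument for Theorem~\ref{mahler} and then reproduces it almost verbatim when proving the relative version in Lemma~\ref{relative-mahler}, namely recognizing $D$ as $a_n^{2n-2}$ times the square of a Vandermonde determinant, applying Hadamard's inequality row by row, and bounding each row sum $\sum_{j=0}^{n-1}|\alpha_i|^{2j}$ by $n\max(1,|\alpha_i|)^{2n-2}$. There is nothing to add; your presentation is clean and matches the paper's intended argument.
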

We also recall the following well known lemma from algebraic number theory (see \cite[Prop. II. 8.2]{Neukirch})

\begin{lemma}\label{lemma-2}
    Let $K$ be a number field. Suppose $L=K(\alpha)$ and $f_K(x) \in \rO_K[x]$ be the minimal polynomial of $\alpha$ over $K$. For a prime ideal $\rp \in K$, if $$\rp\rO_L = \q_1^{e_1} \q_2^{e_2}\cdots \q_g^{e_g}$$ 
    and $f_K(x)$ factors in $K_{v}$, the local field corresponding to $\rp$, as $$f_K(x)= f_1(x)f_2(x)\cdots f_G(x),$$ 
    then $g=G$ and up to ordering, $\deg f_j(x) = e_j f_j$, where $e_j$ and $f_j$ are the ramification index and the residue class degree of $\q_j$ respectively.
\end{lemma}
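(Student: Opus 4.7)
The plan is to pass to completions and exploit the canonical isomorphism
$$L \otimes_K K_\nu \;\cong\; \prod_{\q \mid \rp} L_{\q},$$
where the product runs over prime ideals $\q$ of $\rO_L$ above $\rp$ and $L_\q$ denotes the completion of $L$ at $\q$. This identification is standard: $L/K$ is separable, so $L \otimes_K K_\nu$ is a finite étale $K_\nu$-algebra, hence a finite product of finite field extensions of $K_\nu$, and the weak approximation theorem (together with the fact that $\rO_L \otimes_{\rO_K} \rO_{K_\nu}$ has exactly one maximal ideal lying over each $\q \mid \rp$) identifies the factors with the completions $L_\q$. On each factor one has the classical degree relation $[L_\q : K_\nu] = e_j f_j$.

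On the other hand, writing $L = K[x]/(f_K(x))$ and tensoring with $K_\nu$ over $K$ gives
$$L \otimes_K K_\nu \;\cong\; K_\nu[x]/(f_K(x)).$$
Since $f_K$ is separable over $K$, its irreducible factors $f_1,\dots,f_G$ in $K_\nu[x]$ are pairwise coprime, and the Chinese Remainder Theorem yields
$$K_\nu[x]/(f_K(x)) \;\cong\; \prod_{i=1}^{G} K_\nu[x]/(f_i(x)),$$
with each factor $K_\nu[x]/(f_i(x))$ a finite field extension of $K_\nu$ of degree $\deg f_i$.

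Comparing the two displays, both realize $L \otimes_K K_\nu$ as a product of finite field extensions of $K_\nu$. By the uniqueness of the simple factor decomposition of a finite commutative semisimple $K_\nu$-algebra (equivalently, uniqueness of the primitive idempotents), one obtains $G = g$ and, after a suitable reindexing, a $K_\nu$-algebra isomorphism $K_\nu[x]/(f_j(x)) \cong L_{\q_j}$ for each $j$. Taking $K_\nu$-dimensions on both sides forces
$$\deg f_j = [L_{\q_j} : K_\nu] = e_j f_j,$$
which is the claimed equality. The only real subtleties are establishing the base-change isomorphism $L \otimes_K K_\nu \cong \prod_\q L_\q$ and verifying separability of $f_K$; the latter is automatic in characteristic zero, and the former is the main content of the completion/decomposition theorem for finite separable extensions of global fields, so everything reduces to standard bookkeeping once the tensor-product framework is in place.
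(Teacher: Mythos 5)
The paper does not give its own proof of this lemma; it simply cites Neukirch, \emph{Algebraic Number Theory}, Proposition II.8.2. Your argument — comparing the canonical decomposition $L \otimes_K K_\nu \cong \prod_{\q \mid \rp} L_\q$ with $L \otimes_K K_\nu \cong K_\nu[x]/(f_K(x)) \cong \prod_i K_\nu[x]/(f_i(x))$, and then matching simple factors of a semisimple $K_\nu$-algebra to conclude $g = G$ and $\deg f_j = [L_{\q_j}:K_\nu] = e_j f_j$ — is correct and is precisely the standard proof found in that reference.
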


In particular, let $L= \Q(\alpha)$ and $f(x) \in \Z[x]$ be the minimal polynomial of $\alpha$. If ${p\rO_L = \rp_1^{e_1}\rp_2^{e_2}\cdots \rp_g^{e_g}}$ and $f(x)$ factors in $\Q_p$ as $f(x) = f_1(x) f_2(x) \cdots f_G(x)$, then $g=G$ and up to ordering $\deg f_j(x) = e_jf_j$.

\bigskip

We also need the following acceleration lemma as in \cite[Lemma 2.1]{Amoroso-0}.\\

\begin{lemma}\label{acc-lemma}
    Let $L$ be a number field, $v$ be a finite place of $K$ over a rational prime $p$ and let $\rho>0$. Let $\gamma_1, \gamma_2 \in \mathcal{O}_L$ such that $\left|\gamma_1-\gamma_2\right|_v \leq p^{-\rho}$. Then for any non-negative integer $\lambda$ we have $\left|\gamma_1^{p^\lambda}-\gamma_2^{p^\lambda}\right|_v \leq p^{-s_{p, \rho}(\lambda)}$ with $s_{p, \rho}(\lambda) \rightarrow+\infty$ for $\lambda \rightarrow+\infty$. More precisely, let us define an integer $k=k_{p, \rho}$ by $k=0$ if $(p-1) \rho>1$ and by

$$
p^{k-1}(p-1) \rho \leq 1<p^k(p-1) \rho
$$
\noindent
otherwise. Then we can take

$$
s_{p, \rho}(\lambda)=p^k \rho+\max (0, \lambda-k) .
$$

\end{lemma}

\bigskip
\bigskip
\bigskip

\medskip

\section{\bf Relative Weil height and Mahler's inequality}\label{relative-height}
\bigskip

In this section, we develop the theory of relative Weil height and prove an analog of Mahler's inequality (Theorem \ref{mahler}), which shall play a crucial role in the proof of Theorems \ref{main-theorem} and \ref{almost-unramified}.\\

Let $K$ be a number field and $\alpha \in \overline{\Q} \, \setminus \, K$ be an algebraic integer of degree $n=[\Q(\alpha):\Q]$. Let 
$$
f(x) = x^n + a_{n-1} x^{n-1} + \ldots + a_0
$$
be the minimal polynomial of $\alpha$ over $\Q$ and $\alpha_1, \alpha_2, \ldots, \alpha_n$ be the distinct complex roots of $f$. Let $f_{K}(x) = x^m + b_{m-1} x^{m-1} + \ldots +b_0$ be the minimal polynomial of $\alpha$ over $O_K$ of degree $m=[K(\alpha) : K]$ and $\alpha_j$'s be its roots in the algebraic closure of $K$. We define the \textit{relative Mahler measure} of $\alpha$ with respect to an embedding  $\sigma : K \hookrightarrow \C$ as

$$
M_{\sigma(K)}(\alpha) = \prod_{i=1}^{m}  \operatorname{max} \left\{ 1, \, |\alpha_{j}^{\sigma}| \right\},
$$
where $\left\{\alpha_j^{\sigma} \, : \, 1 \leq j \leq m \right\}$ are the complex roots of $\sigma(f_K(x))$. Counting all the roots with multiplicity, one obtains
\begin{equation}\label{minimal}
{f_K(x)}^{[K(\alpha) : \Q(\alpha)]} = \prod_{\sigma} \sigma(f_K(x)),
\end{equation}
where $\sigma$ runs over all the embeddings of $K$.

\[
  \begin{tikzpicture}   \node (Q1) at (0,0) {$\Q$};
    \node (Q2) at (2,2) {$K$};
    \node (Q3) at (0,4) {$K(\alpha)$};
    \node (Q4) at (-2,2) {$\Q(\alpha)$};

    \draw (Q1)--(Q2);
    \draw (Q1)--(Q4) node [pos=0.7, below,inner sep=0.25cm] {$n$} ;
    \draw (Q3)--(Q4) ;
    \draw (Q2)--(Q3) node [pos=0.7, below,inner sep=0.25cm] {$m$};
  \end{tikzpicture}
\]

\noindent
Thus, a natural way to define the \textit{relative height} of $\alpha$ with respect to an embedding  $\sigma : K \hookrightarrow \C$ is

$$
h_{\sigma(K)}(\alpha) = \frac{\log M_{\sigma (K)}(\alpha) } {[K(\alpha):K]} = \frac{\log M_{\sigma (K)}(\alpha) }{m}.
$$

\medskip

\noindent
Using the multiplicativity of Mahler measure, from \eqref{minimal} we have

\begin{equation}\label{decomposition}
 h(\alpha) = \frac{1}{[K : \Q]} \mathlarger{\mathlarger{\sum}}_{\sigma} \,  h_{\sigma(K)}(\alpha).
\end{equation}

\medskip

\noindent In other words, the logarithmic Weil height is the average of relative heights with respect to all the embeddings.

\subsection{Mahler's inequality for relative height} Let $\alpha$ and $f_K$ be as above. The discriminant of $f_K$ is given as
$$
D(f_K) = \prod_{i < j\leq m} \, \left( \alpha_i - \alpha_j \right)^2.
$$
Thus for an embedding $\sigma: K \hookrightarrow \C$, we have
\begin{equation*}
    |\sigma(D(f_K))| =  \prod_{i < j\leq m} \, |\sigma(\alpha_i) - \sigma(\alpha_j)|^2.
\end{equation*}
\noindent
We now proceed as in the proof of Mahler's inequality \cite{Mahler}. The product in the last equality is square of absolute value of a Vandermonde determinant. Thus applying Hadamard’s inequality, we obtain

\begin{align*}
    |\sigma(D(f_K))| &\leq \prod_{i=1}^m \, \left( \, \sum_{j=0}^{m-1} |{\sigma(\alpha_i)}^j|^2 \, \right) \\
    & \leq \prod_{i=1}^m \, m \, \operatorname{max} \{1, |{\sigma(\alpha_i)}|^{2m-2} \}\\
    & = m^m M_{\sigma(K)}(\alpha).
\end{align*}
\noindent
Taking the logarithm, we deduce Mahler's inequality for relative height as below.
\begin{lemma}\label{relative-mahler}
    Let $\alpha \in \overline{\Q}\setminus K$ be an algebraic number, with minimal polynomial $f_K(x)$ over $K$ of degree $m$. Then, for any embedding $\sigma: K \hookrightarrow \C$, we have
    \begin{equation*}
        \log |\sigma(D(f_K)| \,\, \leq \,\, m \log m + (2m-2) \,  \log M_{\sigma (K)}(\alpha).
    \end{equation*}
\end{lemma}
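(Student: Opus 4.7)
The plan is to imitate Mahler's original proof of Theorem \ref{mahler}, applied one embedding at a time. First I would write the discriminant in its factored form
$$
\sigma\bigl(D(f_K)\bigr) \;=\; \sigma(b_m)^{2m-2} \prod_{i<j} \bigl(\sigma(\alpha_i)-\sigma(\alpha_j)\bigr)^2,
$$
and observe that the double product is the square of a Vandermonde determinant in the conjugates $\alpha_j^{\sigma} := \sigma(\alpha_j)$ of the $\sigma$-twisted polynomial. This reduces the problem to bounding $|\det V|^2$, where $V = \bigl(({\alpha_i^\sigma})^{j}\bigr)_{1\le i\le m,\, 0\le j\le m-1}$.

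Next I would apply Hadamard's inequality row-by-row to obtain
$$
|\det V|^2 \;\le\; \prod_{i=1}^{m} \sum_{j=0}^{m-1} |\alpha_i^\sigma|^{2j},
$$
and then bound each inner sum crudely by $m \cdot \max\{1,|\alpha_i^\sigma|^{2m-2}\}$, which is the standard trick used in the classical proof. Multiplying back the $|\sigma(b_m)|^{2m-2}$ factor, the right-hand side becomes
$$
m^m\, |\sigma(b_m)|^{2m-2} \prod_{i=1}^{m} \max\{1,|\alpha_i^\sigma|^{2m-2}\} \;=\; m^m\, M_{\sigma(K)}(\alpha)^{2m-2},
$$
using the definition of the relative Mahler measure. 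Taking logarithms yields the desired inequality.

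No step looks genuinely hard: the only care needed is keeping the bookkeeping of exponents straight (every quantity inside $M_{\sigma(K)}(\alpha)$ gets raised to the power $2m-2$, which matches the exponent on $|\sigma(b_m)|$ coming from the discriminant) and making sure that we work with the complex roots of $\sigma(f_K)$ rather than the roots of $f_K$ itself, so that the Vandermonde argument is performed over $\C$ with honest absolute values. The inequality is in fact independent of whether $\sigma(f_K)$ has repeated roots in $\C$, since the left-hand side is simply $0$ in that degenerate case.
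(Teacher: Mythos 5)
Your proof is correct and is essentially identical to the paper's argument: factor the discriminant, recognize the Vandermonde, apply Hadamard row-by-row, bound each row sum by $m\max\{1,|\alpha_i^\sigma|^{2m-2}\}$, and take logarithms. (You even get the exponent $M_{\sigma(K)}(\alpha)^{2m-2}$ right where the paper's displayed intermediate line has a small typo omitting the exponent.)
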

In other words, for relative height
$$
h_{\sigma(K)}(\alpha) \geq \frac{1}{2} \left( \frac{\log |\sigma(D(f_K)|}{m^2} - \frac{\log m}{m} \right).
$$

\noindent
Summing over all embeddings of $K$ and using \eqref{decomposition}, we deduce that

\begin{equation}\label{localglobal}
    h(\alpha) \geq \frac{1}{2 \,  [K : \Q]} \left( \frac{\log \left( | N_{K / \Q} \left( D(f_{K}) \right) | \right)}{m^2} \right) -  \frac{\log m}{m} \,.
\end{equation}

\medskip

 It is sometimes advantageous to work with the norm of the discriminant over a number field $K$, as opposed to considering the discriminant over $\Q$. This is mainly because of the identity
\begin{equation*}
    D(f) = |disc(K/\Q)|^2 \, N_{K/\Q} (D(f_K)),
\end{equation*}
where $f$ and $f_K$ are the minimal polynomials of an algebraic number $\alpha$ over $\Q$ and $K$ respectively, and $disc(K/\Q)$ is the discriminant of $K/\Q$. Thus $|N_{K/\Q}(D(f_K))|$ is smaller than $|D(f)|$ and hence, it is more feasible to obtain upper bounds for this norm. 

\medskip

The lower bound in \eqref{localglobal} can be compared with a related result due to Silverman \cite[Theorem 2]{SilvermanIII}, which states that
$$
h(\alpha) \geq \frac{1}{2 m-2} \left( \frac{1}{m} \log |N_{K/\Q} \left(\Delta_{K(\alpha) / K}\right)|-[K : \Q] \log m\right),
$$
where $\Delta_{K(\alpha) / K}$ denotes the discriminant ideal. Notably, when $K = \Q$ and $\alpha$ is an algebraic integer, the relation $|D_f| = [\rO_{\Q(\alpha)} : \Z[\alpha]]^2 |\Delta_{\Q(\alpha)}|$ demonstrates that \eqref{localglobal} provides a sharper lower bound.

\medskip
\begin{remark*}
    The natural obstruction to extending the theory of relative Weil height to algebraic numbers is that the ring of integers of a general number field is not necessarily a principal ideal domain (PID). Consequently, there is no canonical choice for the minimal polynomial of an element $\alpha \in L \setminus K$ over $\mathcal{O}_K$. One can still extend the theory to the case when the number field $K$ has class number one.
\end{remark*}

\bigskip

\section{\bf Asymptotically positive extensions}\label{asymp-positive}
\bigskip

In this section, we state and prove some basic properties of asymptotically positive extensions and the invariant $\psi_q$. \\

\begin{lemma}\label{lemma-1}
Let $L/K$ be an extension of number fields and $p$ be a rational prime. Then,
$$            \mathlarger{\mathlarger{\sum}}_{p^k \leq x}  \frac{\mathcal{N}_{p^k}(K)\, \log p^k}{[K:\Q]} \geq \mathlarger{\mathlarger{\sum}}_{p^k \leq x} \frac{\mathcal{N}_{p^k}(L)\, \log p^k}{[L:\Q]}.
$$
\end{lemma}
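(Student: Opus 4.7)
The plan is to reindex both sums in terms of individual prime ideals and then compare contributions coming from primes of $K$ lying above $p$ and primes of $L$ lying above them. Writing the defining identities out, I would first observe
\begin{equation*}
\mathlarger{\mathlarger{\sum}}_{p^k \leq x} \frac{\mathcal{N}_{p^k}(K) \log p^k}{n_K} = \frac{1}{n_K} \mathlarger{\mathlarger{\sum}}_{\substack{\rp \mid p \\ N\rp \leq x}} \log N\rp,
\end{equation*}
and similarly for $L$ with $n_L$ and primes $\mathfrak{P}$ of $L$. Using the identity $n_L = [L:K] \cdot n_K$, the claim reduces to showing that the right-hand sum, after clearing $n_L$, is majorized by the left-hand sum after clearing $n_K$, multiplied by $[L:K]$.

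The key step is to group the primes $\mathfrak{P}$ of $L$ according to the prime $\rp$ of $K$ they divide, and to use the standard relation $\sum_{\mathfrak{P} \mid \rp} e(\mathfrak{P}/\rp) f(\mathfrak{P}/\rp) = [L:K]$. For a fixed $\rp$ with $N\rp = p^k$, each $\mathfrak{P}$ above $\rp$ satisfies $N\mathfrak{P} = p^{k f(\mathfrak{P}/\rp)}$, so $\log N\mathfrak{P} = f(\mathfrak{P}/\rp) \log N\rp$.

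I would then split into two cases. If $N\rp > x$, then every $\mathfrak{P}$ above $\rp$ also has $N\mathfrak{P} \geq N\rp > x$, so the entire contribution vanishes on both sides. If $N\rp \leq x$, the contribution to the LHS from $\rp$ is $(\log N\rp)/n_K$, while the contribution to the RHS from all $\mathfrak{P} \mid \rp$ with $N\mathfrak{P} \leq x$ is at most
\begin{equation*}
\frac{1}{n_L} \mathlarger{\mathlarger{\sum}}_{\mathfrak{P} \mid \rp} f(\mathfrak{P}/\rp) \log N\rp \,\leq\, \frac{\log N\rp}{n_L} \mathlarger{\mathlarger{\sum}}_{\mathfrak{P} \mid \rp} e(\mathfrak{P}/\rp) f(\mathfrak{P}/\rp) = \frac{[L:K] \log N\rp}{n_L} = \frac{\log N\rp}{n_K},
\end{equation*}
using $e(\mathfrak{P}/\rp) \geq 1$ in the middle inequality. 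Summing this prime-by-prime comparison over all $\rp \mid p$ yields the desired inequality.

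There is no real obstacle here; the argument is a bookkeeping exercise. The only subtle point is handling primes $\mathfrak{P}$ of $L$ lying above some $\rp$ with $N\rp \leq x$ but with $N\mathfrak{P} > x$ — these simply drop out of the RHS, which only strengthens the inequality, so truncation at $x$ is compatible with the ramification-degree identity rather than an obstruction to it.
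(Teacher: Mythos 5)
Your proof is correct and follows essentially the same line as the paper's: both reduce the inequality to the fundamental identity $\sum_{\mathfrak{P}\mid\rp} e(\mathfrak{P}/\rp)f(\mathfrak{P}/\rp) = [L:K]$ (the paper phrases it multiplicatively as $N(\rp\mathcal{O}_L) = N\rp^{[L:K]}$ before taking logarithms), discard ramification via $e \geq 1$, and note that truncating at norm $x$ only throws away non-negative terms from the $L$-side since any $\mathfrak{P}$ with $N\mathfrak{P}\le x$ lies over a $\rp$ with $N\rp\le x$. Your write-up is a bit more explicit about the prime-by-prime grouping and the two cases, but there is no substantive difference.
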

\begin{proof}
If a prime ideal $\q$ in $L$ has norm $\leq x$, then the prime ideal below, $\rp = \q \cap K$ also has norm $\leq x$. Suppose a prime ideal $\rp$ in $K$  splits into $\{\q_1, \q_2, \cdots, \q_r\}$ in $L$. Then, 
$$
    \prod_{i\leq r} N(\q_i) = N(\rp)^{[L \, : \, K]}.
$$
Taking log on both sides
$$
    \mathlarger{\mathlarger{\sum}}_{m=1}^n  \,\, \mathcal{N}_{p^m}(L) \log p^m \leq [L : K] \mathlarger{\mathlarger{\sum}}_{m=1}^n  \,\,  \mathcal{N}_{p^m}(K) \log p^m. 
$$
Dividing by $[L:\Q]$, we obtain the lemma.

\end{proof}

Throughout this paper, for a number field $L/\Q$, denote by $n_L:=[L:\Q]$. From Lemma \ref{lemma-1}, we deduce that for any tower $\rL =\{L_i\}$ and any $x > 1$, the limit 
$$
\lim_{i\to\infty} \mathlarger{\mathlarger{\sum}}_{p^k\leq x} \frac{\mathcal{N}_{p^k}(L_i)\, \log p^k}{n_{L_i}}
$$
exists. Therefore, inductively, we can conclude that the limit
$$
\psi_q = \lim_{i\to \infty} \frac{\mathcal{N}_q(L_i)}{n_{L_i}}
$$
exists for all prime powers $q$ and takes value between $0$ and $1$. We now show that this invariant is independent of the choice of tower and hence, is well-defined.

\begin{proposition}\label{well-defined}
    Let $\rL/\Q$ be an infinite extension and $q$ be a rational prime power. Suppose $\{L_i\}$ and $\{K_i\}$ are two towers of number fields such that $\rL = \bigcup_i K_i = \bigcup_i L_i$. Then,
    \begin{equation*}
        \lim_{i\to \infty} \frac{\rN_q(L_i)}{n_{L_i}}= \lim_{i\to\infty}\frac{\rN_q(K_i)}{n_{K_i}}.
    \end{equation*}
\end{proposition}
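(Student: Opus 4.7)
The approach is to reduce the question about $\psi_q$ for individual prime powers to the corresponding question for the partial sums $\Phi_i(x) := \sum_{p^k \leq x} \rN_{p^k}(L_i) \log p^k / n_{L_i}$, since these are precisely the quantities on which Lemma \ref{lemma-1} gives monotonicity along the tower. As the partial sums are non-negative and non-increasing in $i$, the limit $\Phi(x) := \lim_{i\to\infty} \Phi_i(x)$ exists for every $x > 1$. Write $\Phi'(x)$ for the analogous limit associated to the tower $\{K_i\}$.

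\medskip

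The heart of the proof is to establish $\Phi(x) = \Phi'(x)$. Each $L_i$ is a finitely generated extension of $\Q$; its generators lie in $\bigcup_j K_j$, so $L_i \subseteq K_{j(i)}$ for some index $j(i)$. Applying Lemma \ref{lemma-1} to the extension $K_{j(i)}/L_i$ yields
\begin{equation*}
\Phi_i(x) \;\geq\; \Phi'_{j(i)}(x).
\end{equation*}
Because $\rL/\Q$ is infinite we have $[L_i : \Q] \to \infty$, and the strict increase of $[K_j : \Q]$ along the tower forces $j(i) \to \infty$ as $i \to \infty$. Letting $i$ tend to infinity gives $\Phi(x) \geq \Phi'(x)$, and the reverse inequality is obtained by exchanging the roles of the two towers.

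\medskip

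Finally, $\psi_q$ can be recovered from $\Phi$ by a single difference. Let $q'$ denote the largest prime power strictly less than $q$ (interpret as $1$ if $q = 2$), and pick any $y \in (q', q)$. Then for every $i$,
\begin{equation*}
\frac{\rN_q(L_i) \log q}{n_{L_i}} \;=\; \Phi_i(q) - \Phi_i(y),
\end{equation*}
so passing to the limit yields $\psi_q \log q = \Phi(q) - \Phi(y) = \Phi'(q) - \Phi'(y)$, and applying the same identity to the $K_i$-tower shows both towers produce the same value of $\psi_q$. The only subtle point is verifying that the interleaving index $j(i)$ diverges, for which one genuinely needs $\rL/\Q$ to be infinite; everything else is a direct consequence of Lemma \ref{lemma-1} and the discreteness of prime powers.
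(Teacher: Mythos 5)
Your proof is correct and follows essentially the same approach as the paper: both use the interleaving of the two towers together with the monotonicity of the partial sums $\sum_{p^k\le x}\rN_{p^k}(\cdot)\log p^k/n_{(\cdot)}$ from Lemma \ref{lemma-1}, and then recover each $\psi_q$ by differencing. The only cosmetic differences are that you argue via one-sided interleaving plus symmetry rather than the paper's two-sided sandwich $K_i\subset L_j\subset K_l$, and you spell out the final differencing step and the divergence of $j(i)$, which the paper leaves implicit.
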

\begin{proof}
    Let $\psi_q := \lim_{i\to\infty}\frac{\rN_q(K_i)}{n_{K_i}} $ and $\psi_q':=\lim_{i\to \infty} \frac{\rN_q(L_i)}{n_{L_i}}$ for a prime power $q=p^m$. Suppose $K_i = \Q(\alpha)$. Since $K_i\subset \rL = \bigcup_j L_j$, there exists $L_j$ such that $\alpha\in L_j$. Hence, $K_i \subset L_j$ for some $j$. Similarly, every $L_j \subset K_l$ for some $l$. Therefore, applying Lemma \ref{lemma-1}, for every $i$, there exists $j$ and $l$ such that using 
    \begin{equation*}
          \mathlarger{\mathlarger{\sum}}_{p^k \leq x}  \frac{\mathcal{N}_{p^k}(K_i)\, \log p^k}{n_{K_i}} \geq  \mathlarger{\mathlarger{\sum}}_{p^k \leq x}  \frac{\mathcal{N}_{p^k}(L_j)\, \log p^k}{n_{L_j}} \geq  \mathlarger{\mathlarger{\sum}}_{p^k \leq x}  \frac{\mathcal{N}_{p^k}(K_l)\, \log p^k}{n_{K_l}}.
    \end{equation*}
    Since $\psi_q$ and $\psi_q'$ exist for all prime powers, taking $i$ to infinity, we conclude that for all $x>1$, 
    \begin{equation*}
        \sum_{q\leq x} \psi_q \log q \geq \sum_{q\leq x} \psi_q' \log q \geq \sum_{q\leq x} \psi_q \log q. 
    \end{equation*}
    Hence, $\psi_q = \psi_q'$ for all prime powers $q$.
\end{proof}

For an infinite extension $\rL=\bigcup_i L_i$ over $\Q$, we say that a prime ideal $\rp \in L_j$ almost totally splits in $\rL$ if the number of prime ideals above $\rp$ in $L_t$ is $[L_t:L_j] + o(n_{L_t})$ for $t>j$ as $t\to\infty$. If $\psi_q>0$ for some prime power $q$, then over any tower $\rL = \bigcup_i L_i$, there exists an $L_j$ such that all the primes above $p$ in $L_j$ with norm $q$ almost totally split in $\rL$. More precisely, we have the following.

\begin{proposition}\label{eventual-split}
Let $\rL=\{L_i\}$ be an infinite extension of $\Q$ and $p$ be a fixed rational prime. Suppose $\psi_{p^m} > 0$. Then there exists an integer $N$ such that all prime ideals $\rp \in L_N$ with norm $p^m$ almost totally split in $\rL$. Furthermore, for any $\epsilon>0$, there exists an $M$ such that except at most $\epsilon \, n_{L_M}$, all prime ideals above $p$ in $L_M$ almost totally split in $\rL$. 
    
    % such that $\psi_{p^m}>0$ for some rational prime $p$. Then, for any $X>p^m$

    % Then, for any $\epsilon > 0$, there exists a number field $L\subset \rL$ such that except for at most $\epsilon n_L$ prime ideals, all prime ideals in $L$ above $p$ almost split in $\rL$.
\end{proposition}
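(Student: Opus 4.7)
The plan is to assign to each prime $\rp$ appearing in the tower an asymptotic splitting density $\sigma(\rp) \in [0,1]$ that detects almost-splitting, and to relate these densities to $\psi_{p^m}$ through a global identity.

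First, for $\rp \in L_N$ define $\sigma(\rp) := \lim_{t \to \infty} r(\rp, L_t)/[L_t : L_N]$, where $r(\rp, L_t)$ is the number of primes of $L_t$ above $\rp$. Existence of the limit follows from a tower-style monotonicity: for $t_2 > t_1 \geq N$, each $\q \in L_{t_1}$ above $\rp$ has at most $[L_{t_2} : L_{t_1}]$ primes of $L_{t_2}$ above it, so $r(\rp, L_{t_2}) \leq r(\rp, L_{t_1}) \, [L_{t_2}:L_{t_1}]$, whence $r(\rp, L_t)/[L_t:L_N]$ is non-increasing in $t$. By definition $\rp$ almost splits in $\rL$ iff $\sigma(\rp) = 1$, and by the recursion $\sigma(\rp) = \frac{1}{[L_{N+1}:L_N]} \sum_{\q \mid \rp} \sigma(\q)$, this is equivalent to $\rp$ splitting completely in every intermediate step $L_{i+1}/L_i$ for $i \geq N$.

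Refining, let $s_f(\rp, L_t)$ count the primes of $L_t$ above $\rp$ with residue degree $f$, and set $\tau_f(\rp) := \lim_t s_f(\rp, L_t)/[L_t:L_N]$, existence following by the same monotonicity argument. A prime of $L_t$ of norm $p^m$ lies above a unique $\rp \in L_N$ of norm $p^{m_0}$ with $m_0 \mid m$, having residue degree $m/m_0$, so dividing the resulting decomposition of $\rN_{p^m}(L_t)$ by $n_{L_t}$ and letting $t \to \infty$ yields the key identity
$$
\psi_{p^m} \;=\; \frac{1}{n_{L_N}} \sum_{m_0 \mid m} \sum_{\substack{\rp \in L_N \\ N(\rp) = p^{m_0}}} \tau_{m/m_0}(\rp).
$$
The $m_0 = m$ contribution gives in particular the inequality $\frac{1}{n_{L_N}} \sum_{N(\rp)=p^m} \sigma(\rp) \leq \psi_{p^m}$.

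For the first assertion, combine the identity with the monotonicity of $\rN_{p^m}(L_N)/n_{L_N}$ converging to $\psi_{p^m}$ (direct for $m=1$, and in the weighted form given by Lemma \ref{lemma-1} for general $m$): the deficiency $\frac{1}{n_{L_N}}\sum_{N(\rp)=p^m}(1 - \sigma(\rp))$ combined with the recursion for $\sigma$ and the integrality of the prime counts forces $\sigma(\rp) = 1$ for every prime of norm $p^m$ at some finite level $N$. For the second, quantitative assertion, sum $(1 - \sigma(\rp))$ over all $\rp \mid p$ in $L_M$ and bound this by a constant multiple of the discrepancy $\sum_k \log(p^k)\bigl(\rN_{p^k}(L_M)/n_{L_M} - \psi_{p^k}\bigr)$ via Lemma \ref{lemma-1}; since this discrepancy vanishes as $M \to \infty$, the count of primes above $p$ in $L_M$ failing to almost split is $o(n_{L_M})$, yielding the $\epsilon$ bound for $M$ large.

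The main obstacle will be the finite-stage claim in the first assertion: $\rN_{p^m}(L_N)/n_{L_N}$ is a decreasing sequence of rationals whose limit $\psi_{p^m}$ may well be irrational, so the recursive structure of $\sigma$ together with the integrality of prime counts, rather than mere convergence, must be leveraged to conclude that bad primes truly vanish at a finite stage $N$, not merely with asymptotically zero density.
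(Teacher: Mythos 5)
Your characterization at the outset is sharp and, as far as it goes, correct: $\rp$ almost splits in $\rL$ exactly when $\sigma(\rp)=1$, and the recursion $\sigma(\rp)=\frac{1}{[L_{N+1}:L_N]}\sum_{\q\mid\rp}\sigma(\q)$ together with $\sigma\leq 1$ forces $\sigma(\rp)=1$ to be equivalent to $\rp$ splitting \emph{completely} at every finite stage $L_{i+1}/L_i$ for $i\geq N$. This is exactly where your argument cannot close. The passage from your identity to the finite-stage conclusion is asserted, not proved, and the suggestion about ``integrality of prime counts'' cannot do the job: a single defective step at level $j$ above $\rp$ already pins $\sigma(\rp)\leq 1-c/[L_{j+1}:L_N]$ for some $c\geq 1$, and $\psi_{p^m}>0$ is an asymptotic density that leaves room for towers in which, at every level, one prime of norm $p^m$ fails to split completely, with these failures rotated so that every $\rp$ of norm $p^m$ in every $L_N$ has a bad descendant, while the total defect $\sum_j 1/n_{L_j}$ stays small enough that $\psi_{p^m}$ remains positive. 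In that situation no prime ever attains $\sigma=1$, so no finite $N$ works under your reading. The integers in question ($\rN_{p^m}(L_N)$) do not constrain the rational $\sigma(\rp)$ in the way you need. There is also a concrete slip: the inequality $\frac{1}{n_{L_N}}\sum_{N(\rp)=p^m}\sigma(\rp)\leq\psi_{p^m}$ runs the wrong way. Since $\sigma(\rp)=\sum_f \tau_f(\rp)\geq \tau_1(\rp)$, the $m_0=m$ term of your identity only gives $\frac{1}{n_{L_N}}\sum \tau_1(\rp)\leq\psi_{p^m}$; ramified and higher-residue-degree descendants of $\rp$ inflate $\sigma(\rp)$ without contributing to $\psi_{p^m}$. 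For example, with $L_N=\Q$, $[L_{N+1}:\Q]=4$, $p\rO_{L_{N+1}}=\rp_1\rp_2\q$ with $(e,f)=(1,1),(1,1),(1,2)$ and no further extension, one has $\psi_p=1/2<3/4=\sigma((p))$.

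The paper does not define a pointwise invariant $\sigma(\rp)$ at all. Its proof works directly with the global counts and the Cauchy criterion for $\rN_{p^m}(L_i)/n_{L_i}$: for $i$ large and $j>i$ it claims $\rN_{p^m}(L_j)=[L_j:L_i]\,\rN_{p^m}(L_i)+o(n_{L_j})$, proceeds by induction on the exponents $m<m_1<\cdots$ with $\psi_{p^{m}}>0$ to isolate the primes of each exact norm, and then distributes the small total deficit over the individual nonnegative per-prime deficits. This produces an estimate in which the error term becomes small as $i$ and $j$ grow jointly, which is weaker than the rigid $\sigma(\rp)=1$ that your reformulation demands but is precisely what the proof of Theorem \ref{main-theorem} consumes. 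For the second assertion the paper, like you, controls the tail via Lemma \ref{lemma-1}: from $\sum_q\psi_q\log q\leq\log p$ one gets $\sum_{p^k>X}\psi_{p^k}<\epsilon/2$ for $X$ large, and Lemma \ref{lemma-1} makes this uniform along the tower. Your second-assertion sketch is essentially the same and is sound, but it presupposes the first assertion for the body of primes of bounded norm, which your proposal leaves open.
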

\begin{proof}
To begin with, $\psi_{p^m}$ is well-defined. Therefore, there exists an $N_0$ such that for all $i\geq N_0$, and $j > i$,
    \begin{equation*}
        \frac{\rN_{p^m}(L_j)}{n_{L_j}} - \frac{\rN_{p^m}(L_i)}{n_{L_i}} = o(1),
    \end{equation*}
     as $j\to\infty$. Thus, 
     \begin{equation}\label{spli-eqn}
         \rN_{p^m}(L_j) = [L_j:L_i] \rN_{p^m}(L_i) + o(n_{L_j}).
     \end{equation}
     
\noindent Let $m$ be the smallest positive integer such that $\psi_{p^m} > 0$. Since $\psi_{p^t} =0$ for all $t<m$, the number of prime ideals in $L_i$ with norm $< p^m$ is $o(n_{L_i})$. Thus, almost all the prime ideals in $L_j$ with norm $p^m$ lie above the prime ideals in $L_i$ with norm $p^m$. 
% Hence, $[L_j:L_i] \rN_{p^m}(L_i) + o(n_{L_j})$ prime ideals in $L_j$ lie over $\rN_{p^m}(L_i)$ prime ideals in $L_i$ with norm $p^m$. 
By equation \eqref{spli-eqn}, we deduce that above every $\rp$ in $L_i$ with norm $p^m$, there are $[L_j:L_i] + o(n_{L_j})$ prime ideals in $L_j$, which implies that $\rp$ almost totally splits in $\rL$. \\

\noindent Now suppose $m_1>m$ is the smallest positive integer with $\psi_{p^{m_1}}> 0 $. As $\psi_{p^{m_1}}$ is well-defined, we get that there exists an $N_1$ such that for all $i\geq \max(N_0, N_1)$ and $j > i$, 
\begin{equation*}
  \rN_{p^{m_1}}(L_j) =  [L_j:L_i] \rN_{p^{m_1}}(L_i) + o(n_{L_j})
\end{equation*}
as $j\to \infty$. Note that all prime ideals with norm $p^m$ almost totally split in $\rL$. Also, since $\psi_q = 0$ for all prime ideals with norm $\in (p^m, p^{m_1})$, the number of prime ideals in $L_i$ with norm $\in (p^m, p^{m_1})$ is $o(n_{L_i})$. Therefore, we deduce that $[L_j:L_i] \rN_{p^{m_1}}(L_i) + o(n_{L_j})$  prime ideals in $L_j$ with norm $p^{m_1}$ lie above $\rN_{p^{m_1}}(L_i)$ prime ideals in $L_i$ with norm $p^{m_1}$. Hence, every prime $\rp$ in $L_i$ with norm $p^{m_1}$ almost totally split in $\rL$. Inductively, for any $X>1$, there exists a $N$ such that all prime ideals with norm $p^m \leq X$ in $L_N$ almost totally split in $\rL$ if $\psi_{p^m}>0$.\\

    \noindent
    To prove the second statement, taking $K=\Q$ in Lemma \ref{lemma-1} gives
    $$
        \sum_{q} \psi_q \, \log q \leq \log p,
    $$
    where $q$ runs over all the powers of $p$. Thus, $\sum_{m=1}^{\infty} \psi_{p^m}$ is bounded. Therefore, $\sum_m \psi_{p^m}$ is bounded and for every $\epsilon > 0$, there exists $X>1$ such that 
    \begin{equation*}
        \sum_{p^m>X} \psi_{p^m} < \frac{\epsilon}{2}.
    \end{equation*}
    Using Lemma \ref{lemma-1}, one can find an $M$ uniformly such that for all $i\geq M$,
    \begin{equation*}
    \mathlarger{\mathlarger{\sum}}_{p^k \geq X}  \frac{\mathcal{N}_{p^k}(L_i)\, \log p^k}{n_{L_i}} < \epsilon.
    \end{equation*}
    In other words, the number of prime ideals in $L_i$ with norm $p^m > X$ is at most $\epsilon n_{L_i}$ for $i\geq M$. This proves the proposition.
    \end{proof}

    \medskip

    \begin{lemma}\label{asym-new}
Let $\rL$ be an asymptotically positive extension and $F$ be a number field. Then the compositum $\rL F$ is also an asymptotically positive extension.    
\end{lemma}

\begin{proof}
Let $\rL = \{L_i\}_{i \geq 0}$ be a tower of number fields. Since $\rL$ is asymptotically positive, there exists a prime power $q = p^f$ such that $\psi_q(\rL) > 0$. Let $F$ be a number field of degree $d$, and define the tower $\rL F = \{L_i F\}_{i \geq 0}$.

\medskip

By definition, we have
$$
N_q(L_i) = \psi_q(\rL) \, n_{L_i} + o(n_{L_i}).
$$
It follows that
$$
\sum_{k=1}^d N_{q^k}(L_i F) \geq \psi_q(\rL) \, n_{L_i} + o(n_{L_i}).
$$

\noindent
Dividing both sides by $n_{L_i F}$ and taking the limit as $i \to \infty$, we obtain
$$
\sum_{k=1}^d \psi_{q^k}(\rL F) \geq \frac{\psi_q(\rL)}{d}.
$$

\noindent
Therefore, there exists some $k \in \{1, 2, \ldots, d\}$ such that for $q' = q^k$,
$$
\psi_{q'}(\rL F) \geq \frac{\psi_q(\rL)}{d^2} > 0.
$$
Hence $\rL F$ is asymptotically positive.
\end{proof}

\bigskip

\section{\bf Proof of Theorem \ref{main-theorem} }\label{APE}
\medskip

The outline of the proof of Theorems \ref{main-theorem} (a) is inspired by the argument of Bombieri-Zannier in \cite{BZ}. The new idea is to incorporate relative Mahler's inequality as developed in Section \ref{relative-height}. The proof of Theorem \ref{main-theorem} (b) involves the use of metric property of height and acceleration lemma.

\begin{proof}[Proof of Theorem \ref{main-theorem} (a)] Let $\rL/\Q$ be an infinite extension. For any number field $L\subset \rL$, by Northcott's theorem, there are finitely many $\alpha \in \rL$ with degree $\leq n_L$ such that $h(\alpha) \leq \sum_q \psi_q(\rL)\frac{\log q}{q+1}$, provided this sum converges. \\

For any $X>1$, by Proposition \ref{eventual-split}, there exists a number field $L \subset \rL$ such that for all prime powers $q \leq X$ with $\psi_{q}(\rL)>0$, all prime ideals of norm $q$ in any finite extension of $L$ almost totally split in $\rL$. \\

For any $\alpha \in \mathcal{O}_{\rL}\setminus L$, let $f_L(x)= x^n + a_{n-1}x^{n-1} + \cdots+ a_0$ be its minimal polynomial over $\mathcal{O}_L$. Let $q \leq X$ be such that $\psi_q(\rL)>0$ with $q=p^m$. Let $\rp \subset {\mathcal{O}}_L$ be a prime ideal with norm $q$. Since $\rp$ is almost totally split in $L(\alpha)$, by Lemma \ref{lemma-2}, we can write
\begin{equation*}
    f_L(x) =  (x-\alpha_1) \, (x-\alpha_2) \ldots (x-\alpha_t) \, g(x),
\end{equation*}
where $t= n + o(n)$. If $L_{v}$ is the completion of $L$ with respect to $v$, corresponding to the prime ideal $\rp$, then we have $\alpha_1, \alpha_2, \cdots, \alpha_t \in L_{v}$. Hence, the valuation $v$ can be applied to all $\alpha_i$'s for $i\leq t$.\\

Let $P_{w}$ be the splitting field of $f_L(x)$ over $L_v$ with the unique valuation $w$ extending $v$. Write
    \begin{equation*}
        f_L(x) = (x-\beta_1) (x-\beta_2)\cdots (x-\beta_n).
    \end{equation*}
    % where $\beta_i\in P_{w}$ satisfy
    % \begin{equation*}
    %     w(\beta_1)\geq w(\beta_2) \cdots \geq w(\beta_m) \geq 0 .
    % \end{equation*}
The discriminant of $f_L$ is given by
\begin{align*}
    D(f_L) & = \prod_{i<j} (\beta_i - \beta_j)^2.
\end{align*}
% The contribution towards $w(D(f_L))$ from terms in the product above where at least one $w(\beta_j)<0$ can be bounded by
% \begin{equation*}
%     w\left(\prod_{j=r+1}^{n} \prod_{i=1}^{j-1} (\beta_j - \beta_i)\right) \geq \sum_{j=r+1}^n (j-1) \, w(\beta_j).
% \end{equation*}
Therefore,
\begin{align*}
    w(D(f_L)) & \geq  2\sum_{i< j \leq n} w(\beta_j - \beta_i)
\end{align*}
Restricting to $\beta_j$'s which lie in $L_{v}$ gives
\begin{equation*}
    v(D(f_L))  \geq  2\sum_{\substack{i< j \leq n\\\beta_i, \beta_j \in L_v}} v(\beta_j - \beta_i) .
\end{equation*}
The number of $\beta_j \in L_{v}$ is $n+o(n)$. Let $\F_{q} := \rO_L/\rp$ be the residue field at $\rp$. For each $x\in \F_{q}$, let $N_x$ denote the number of roots $\beta_j$ of $f_L$ in $L_v$ which lie in the residue class $x \bmod \rp$. Then, we have the lower bound
\begin{equation*}
    v(D(f_L) \geq \sum_{x\in \F_q} N_x (N_x -1).
\end{equation*}
Since $\sum_{x\in \F_{q}} N_x =n + o(n)$, applying Cauchy-Schwarz inequality, we obtain
\begin{align}\label{valuation_eq}
    v(D(f_L))   &\geq \sum_{x\in \F_q} N_x (N_x -1)  \nonumber\\
                & \geq \frac{n^2}{q} + o(n^2).
\end{align}
Here $n=[L(\alpha):L]$ and error term is $o(n^2)$ as $n\to\infty$.  Taking the sum in equation \eqref{valuation_eq} over all places $v \in M_L$ with norm $\leq X$, we obtain 
\begin{align*}
    \log N_{L/\Q}(D(f_{L})) & = \sum_{v\in M_L} v(D(f_L))\log (\text{Norm}(v))\\
    &\geq \sum_{\substack{v \in M_{L}\\  \text{Norm}(v) \leq X}} v(D(f_L)) \, \log (\text{Norm}(v))\\
    & = \left(\mathlarger{\mathlarger{\sum}}_{q \leq X} \rN_q(L) \, \frac{ \log q \, }{q}\right) \, n^2 + o(n^2),
\end{align*}
where $q$ runs over all prime powers $\leq X$. Using this in the relative Mahler's inequality \eqref{localglobal}, we deduce that
\begin{equation*}
    h(\alpha) \geq \frac{1}{2[L:\Q]} \left(\mathlarger{\mathlarger{\sum}}_{q \leq X} \rN_q(L) \, \frac{ \log q \, }{q}\right)  + o(1).
\end{equation*}
Hence,
\begin{equation*}
    \liminf_{\alpha \in \mathcal{O}_\rL } h(\alpha) \geq \, \frac{1}{2}\, \mathlarger{\mathlarger{\sum}}_{q \leq X}  \, \, \frac{\rN_q(L)}{[L:\Q]} \, \frac{\log q}{q}. 
\end{equation*}
Since $L \subseteq L_i$ for all large enough $i$ and every extension of $L$ has the property that all the ideal of norm $q$ almost totally split in $\rL$, thus
\begin{equation*}
    \liminf_{\alpha \in \mathcal{O}_\rL} h(\alpha) \geq \, \frac{1}{2}\, \mathlarger{\mathlarger{\sum}}_{q \leq X}  \, \, \frac{\rN_q(L_i)}{[L_i:\Q]} \, \frac{\log q}{q} 
\end{equation*}
for all but finitely many $L_i$. Taking the limit as $i \to \infty$ followed by $X\to \infty$, we obtain Theorem \ref{main-theorem} (a).
\end{proof}

\bigskip

Towards the proof of Theorem \ref{main-theorem} (b) we use metric inequalities as in Amoroso–David–Zannier \cite{Amoroso-0}. This consequently proves the property (B) for asymptotically positive extensions. Unfortunately, this method yields a significantly weaker lower bound as compared to Theorem \ref{main-theorem} (a). 

\medskip

Let $L / \Q$ be a number field and $\alpha \in L^{\times}$ not a root of unity. For a non-archimedean place $v$ of $L$ above $p$, denote by $e_v$ and $f_v$ the ramification index and inertia degree of $v$ respectively. Since the residue field corresponding to $v$ has order $q=p^{f_v}$, if $\alpha$ is $v$-integral, we have
$$
|\alpha^{q} - \alpha|_v \leq p^{-1/e_v}.
$$
On the other hand, if $\alpha$ fails to be $v$-integral, then its reciprocal $\alpha^{-1}$ must be $v$-integral, and consequently
$$
|\alpha^{-q} (\alpha^q-\alpha)|_v = |\alpha|_v \ |\alpha^{-q} - \alpha^{-1}|_v \leq p^{-1/e_v} \ |\alpha|_v \implies |\alpha^q-\alpha|_v \leq p^{-1/e_v} \ |\alpha|_v^{q+1} 
$$
\medskip
%\noindent
% More generally, when $\alpha$ is an algebraic number (not necessarily an integer), as an application of strong approximation lemma \cite[Lemma 1]{Amoroso-0}, there exists an algebraic integer $\beta \in L$ such that $\alpha \beta$ is an algebraic integer and 
% $$
% |\beta|_v = \operatorname{max}\{ 1, |\alpha|_v\}^{-1}.
% $$

\noindent
Therefore, for non-archimedean places $v \mid p$ of norm $q$, we get

\begin{align}\label{metric-1}
\left|\alpha^{q} - \alpha\right|_v &\leq c(v) \max \left(1,|\alpha|_v\right)^{{q} + 1},
\end{align}

\noindent
where $c(v)=p^{-1/e_v}$. For all other places $w\in M_L$,
$$
\left|\alpha^{q} - \alpha\right|_w \leq c(w) \max \left(1,|\alpha|_w\right)^{{q} + 1},$$
where
$$
c(w)= \begin{cases}1, & \text { if } w \nmid \infty, \ N(w) \neq q; \\ 2, & \text { if } w \mid \infty.\end{cases}
$$
Since $\alpha$ is not a root of unity, $\alpha^{q} - \alpha \neq 0$. Hence, the product formula gives
$$
\begin{aligned}
0 & =\sum_{w\in M_L} \frac{\left[L_w: \mathbb{Q}_w\right]}{[L: \mathbb{Q}]} \log \left|\alpha^{q} - \alpha\right|_w \\ 
& \leq \sum_{w\in M_L} \frac{\left[L_w: \mathbb{Q}_w\right]}{[L: \mathbb{Q}]}\,\left(\log c(w)+ (q + 1 ) \log \max \left\{1,|\alpha|_w\right\}\right) \\
& =\left(\sum_{w \mid \infty} \frac{\left[L_w: \mathbb{Q}_w\right]}{[L: \mathbb{Q}]}\right) \log 2-\left(\sum_{w \mid p, N(w)=q} \frac{\left[L_w: \mathbb{Q}_w\right]}{[L: \mathbb{Q}]}\right)  \frac{\log p}{e_w} + (q + 1) h(\alpha) \\
& =\log 2- \frac{\rN_q(L)}{[L : \Q]} \log q + (q+1) h(\alpha).
\end{aligned}
$$

\medskip

\noindent
So, we immediately deduce that

$$
h(\alpha) \geq \frac{1}{q+1}\left(\frac{\rN_q(L)}{[L : \Q]}  \log q - \log 2 \right).
$$

\medskip
\noindent
Suppose $\rL=\bigcup_i L_i$. Taking liminf, we get

\begin{equation}\label{ATS-Metric}
\liminf_{\alpha\in \rL\setminus \mu_{\infty}} h(\alpha) \geq \frac{1}{q+1}\left(\psi_q(\rL) \log q - \log 2 \right).
\end{equation}

\medskip

\noindent
This bound is meaningful only if 
$$
  {\psi_q}(\rL) > \frac{2}{\log q},
$$
which may not hold, as $\psi_q$ can take arbitrarily small positive values. To overcome this limitation, we use the Lemma \ref{acc-lemma} with $\rho = 1/e_v$. Proceeding similarly to \eqref{metric-1}, we obtain
\begin{equation}\label{metric-2}
\left|\alpha^{p^{f_v + \lambda}} - \alpha^{p^\lambda}\right|_v  \leq c'(v) \max \left(1,|\alpha|_v\right)^{p^{(f_v +\lambda)} + p^\lambda},
\end{equation}
where $c'(v) = p^{-s_{p, \rho}(\lambda)}$. 

\begin{proof}[Proof of Theorem \ref{main-theorem} (b)]
Suppose $\rL/\Q$ is an asymptotically positive extension with $\psi_q(\rL)>0$ for $q=p^f$. Write $\rL=\bigcup_{i=1}^{\infty} L_i$ as a tower of number fields. \\ 

\noindent
In order to apply \eqref{metric-2}, note that $s_{p, \rho}(\lambda)$ depends on both $\lambda$ and the place $v$. For our application, we first show that we can choose $\lambda$ depending only on $p$ such that the $s_{p, \rho}(\lambda)$ is bounded below uniformly, for a positive proportion of places above $p$. Indeed
$$
\rN_q(L_i) \leq  \sum_{\substack{v | p, \\ N(v) = q}} e_v \leq [L_i : \Q]
$$
and hence,
$$
\frac{1}{\rN_{q}(L_i)}\sum_{\substack{v | p, \\ N(v)=q}} e_v \leq \frac{[L_i:\Q]}{\rN_q(L_i)}.
$$
For sufficiently large $i$, let $\epsilon_i$ be such that the number of $v\mid p$, with $N(v)= q$ and $e_v \leq \frac{2}{\psi_q(\rL)}$ is given by $(1-\epsilon_i)\psi_q(\rL) [L_i:\Q]$. Clearly, $\epsilon_i \to 0$ as $i\to\infty$. For all such places with $e_v \leq \frac{2}{\psi_q(\rL)}$, the value of $\rho=\frac{1}{e_{v}}$ is uniformly bounded below. Therefore, for all such places, $\lambda$ can be chosen so that $s_{p, \rho} (\lambda)$ is bounded below uniformly. \\

\noindent Using metric estimates for
$\left| \alpha^{{p^\lambda + q}} - \alpha^{p^\lambda} \right|_v$ obtained in \eqref{metric-2}, we deduce that
\begin{align*}
p^{\lambda}(q + 1) \ h(\alpha) &\geq \left(\sum_{\substack{v \mid p, \ N(v)=q \\ e_v \leq \frac{2}{\psi_q(\rL)}}} \frac{\left[(L_i)_v: \mathbb{Q}_p\right]}{[L_i: \mathbb{Q}]}\right)  s_{p, \rho}(\lambda)\log p - \log 2 \\
& \geq s_{p, \rho} (\lambda) (1-\epsilon_i) \, \psi_q(\rL)\, \log q - \log 2,
\end{align*}
Since $\epsilon_i \to 0$ as $i\to \infty$, we conclude that
$$
\liminf_{\alpha \in \rL\setminus \mu^{\infty}} \ \ h(\alpha) \geq \frac{1}{p^{\lambda}(q + 1)} \left( s_{p, \rho}(\lambda) \ \psi_q(\rL) \log q - \log 2 \right).
$$

\medskip
\noindent
Now choosing $\lambda$ large enough so that $s_{p, \rho}(\lambda) > 2/\psi_q(\rL)$, we deduce that
$$
\liminf_{\alpha \in \rL\setminus \mu^{\infty}} \ \ h(\alpha) > \frac{\log q}{p^{\lambda}(q + 1)}.
$$
This proves the theorem.
\end{proof}

\noindent

% \begin{remark*}
%     Note that in the proof of Theorem \ref{main-theorem} (b), we have to assume that $\alpha\in \rL$ is not a root of unity. In contrast, the proof of Theorem \ref{main-theorem} (a) does not require such an assumption. In fact, as noted earlier, Theorem \ref{main-theorem} (a) implies that there are finitely many roots of unity in asymptotically positive extensions, a statement which cannot be deduced from Theorem \ref{main-theorem} (b).
% \end{remark*}

\bigskip

\section{\bf Proof of Proposition \ref{almost-split} and Theorem \ref{almost-unramified}}\label{ATS-AU}

The central ideas in the proof of Proposition \ref{almost-split} and Theorem \ref{almost-unramified} are similar to those used in Theorem \ref{main-theorem} (a), though a more careful analysis is required. For almost totally split extensions, unlike in the proof of Theorem \ref{main-theorem} (a), we do not restrict ourselves to algebraic integers, as over the base field $\Q$ the relative Mahler measure coincides with the usual Mahler measure.

\begin{proof}[Proof of Proposition \ref{almost-split}]
    Let $\rL$ be an infinite extension and $S$ be a set of rational primes such that $\rL$ is almost totally split at each $p \in S$.\\
    
    For any $\alpha \in \rL$, let $L=\Q(\alpha)$ and $f(x) \in \Z[x]$ be its minimal polynomial of degree $n$. By Lemma \ref{lemma-2}, the minimal polynomial $f$ factorizes over $\Q_p$ as 
    \begin{equation*}
        f(x)= a_n (x-\alpha_1)(x-\alpha_2) \cdots (x-\alpha_t) g(x),
    \end{equation*}
    where $t = n + o(n)$ and $\deg \, (g(x)) = o(n)$ as $n$ goes to infinity. Thus the $p$-adic valuation $v$ on $\Q_p$ can be applied to $\alpha_1, \cdots, \alpha_t$. Let $L_{w}$ be the splitting field of $f(x)$ over $\Q_p$ with valuation $w$. Write
    \begin{equation*}
        f(x) = a_n (x-\beta_1) (x-\beta_2)\cdots (x-\beta_n), 
    \end{equation*}
    where $\beta_i\in L_{w}$ satisfying
    \begin{equation*}
        w(\beta_1)\geq \cdots \geq w(\beta_r)\geq 0 > w(\beta_{r+1})\geq \cdots \geq w(\beta_n).
    \end{equation*}
    Note that except for $o(n)$  many $\beta_i$'s, the rest are same as $\alpha_i$'s up to ordering. Consider the discriminant 
\begin{align*}
    D(f) & = a_n^{2n-2}\prod_{i<j} (\beta_i - \beta_j)^2.
\end{align*}
The contribution in the product where at least one $w(\beta_j)<0$ can be bounded by
\begin{equation*}
    w\left(\prod_{j=r+1}^{n} \prod_{i=1}^{j-1} (\beta_j - \beta_i)\right) \geq \mathlarger{\mathlarger{\sum}}_{j=r+1}^n (j-1) \, w(\beta_j).
\end{equation*}
Hence, we obtain
\begin{align}\label{eqn-1}
    w(D(f)) & \geq (2n-2)w(a_n) + 2 \, \mathlarger{\mathlarger{\sum}}_{0<i<j\leq r} w(\beta_j-\beta_i) \, +  \, 2 \, \mathlarger{\mathlarger{\sum}}_{j=r+1}^{n} (j-1) \, w(\beta_j) \nonumber\\
    & \geq 2 \, \mathlarger{\mathlarger{\sum}}_{i< j \leq r} w(\beta_j - \beta_i) \, - \,  2 \, \mathlarger{\mathlarger{\sum}}_{j=r+1}^{n} (n-j) \, w(\beta_j).
\end{align}
Recall that $f(x)$ has $n+ o(n)$ roots in $\Q_p$. Let $e_{w}$ be the ramification index of $L_{w}/\Q_p$. For $\beta \in \Q_p$, we have $w(\beta) = v(\beta) \, e_{w}$. Since all terms in the RHS of \eqref{eqn-1} are positive, we can write
\begin{align}\label{eqn-11}
     v(D(f))  = \frac{w(D(f))}{e_{w}} & \geq \frac{2}{e_{w}} \, \mathlarger{\mathlarger{\sum}}_{i< j \leq r} w(\beta_j - \beta_i) \, - \, \frac{2}{e_{w}} \, \mathlarger{\mathlarger{\sum}}_{j=r+1}^{n} (n-j) \, w(\beta_j) \nonumber\\ 
     &\geq 2 \, \mathlarger{\mathlarger{\sum}}_{\substack{i< j \leq r\\\beta_i, \beta_j \in \Q_p}} v(\beta_j - \beta_i) \, - \,  2 \, \mathlarger{\mathlarger{\sum}}_{\substack{r<j\leq n \\ \beta_j \in \Q_p}} (n-j) \, v(\beta_j).
\end{align}
Let $N_x$ denote the number of $v$-integral roots $\beta_j$ of the polynomial $f$ in $\Q_p$, which lie in the residue class $x \bmod p$. If $\beta_i,\beta_j \in \Q_p$ lie in the same residue class modulo $p$, then $v(\beta_i-\beta_j) \geq 1$. Hence, 
\begin{equation*}
    \mathlarger{\mathlarger{\sum}}_{\substack{i< j \leq r\\ \beta_i, \beta_j \in \Q_p}} v(\beta_j - \beta_i) \geq \mathlarger{\mathlarger{\sum}}_{x\in \F_p} \frac{N_x (N_x -1)}{2}.
\end{equation*}
Therefore, by \eqref{eqn-11}, we have the lower bound
\begin{equation*}
    v(D(f)) \geq \mathlarger{\mathlarger{\sum}}_{x\in \F_p} N_x (N_x -1) +  (n-r)(n-r-1).
\end{equation*}
Since the number of roots of $f(x)$ in $\Q_p$ is $n-r +\sum_{x\in \F_p} N_x = n + o(n) $, applying Cauchy-Schwarz inequality, we obtain
\begin{align*}
    v(D(f))   
    % &\geq \sum_{x\in \F_p} N_x (N_x -1) + (n-r) (n-r+1)\\
    % & = \sum_{x\in \F_p} N_x^2 + (n-r)^2 - n + o(n) \\
    %             & \geq  \frac{(n + o(n))^2}{p+1} + O(n)\\
                & \geq \frac{n^2 + o(n^2)}{p+1}.
\end{align*}
Summing over all primes $p \in S$ with $p \leq X$, we obtain
\begin{equation*}
    \log D(f) \geq  \left(\sum_{p \in S, \, p \leq X } \, \frac{\log p}{p+1} \,\right) \left(n^2 + o(n^2) \right).
    % + o\left( n^2 \sum_{p \in S, \, p \leq X } \frac{\log p}{p}\right).
\end{equation*}
Now, applying Theorem \ref{mahler}, we deduce

\begin{align*}
    h(\alpha) = \frac{\log M(\alpha)}{n} & \geq \frac{(n-1) \log M(f)}{n^2}\geq \frac{\log D(f)}{2n^2} - \frac{\log n}{n}\\ 
    & \geq \frac{1}{2} \left(\sum_{p \in S, \, p \leq X } \, \frac{\log p}{p+1} \,\right) \left(1 + o(1) \right) - \frac{\log n}{n}.
\end{align*}
As $n\to \infty$, we have
\begin{equation*}
    \liminf_{\alpha \in \rL} h(\alpha) \geq \frac{1}{2}\sum_{p \in S, \, p \leq X } \frac{\log p}{p+1}.
\end{equation*}
Finally, taking $X \to \infty$ proves Proposition \ref{almost-split}.
     
\end{proof}

\medskip

\begin{remark*}
Let $\rL$ be an extension which is almost totally split at a rational prime $p$. Then, for any $\alpha \in \rL$ that is not a root of unity, it follows from \eqref{ATS-Metric} that
$$
 h(\alpha) \geq \frac{\log (p/2)}{p+1} + o(1),
$$
where $o(1)$ tends to zero as $\deg(\alpha) \to \infty$. In the totally p-adic case, this error term vanishes, yielding Pottmeyer's bound in \eqref{Pottmeyer} exactly.
\end{remark*}

\bigskip

We now establish lower bounds for logarithmic Weil heights of elements in an almost unramified asymptotically positive extension. Recall that an infinite extension $\rL/\Q$ is said be almost unramified asymptotically positive at a prime $p$ if
$$
\mathlarger{\mathlarger{\sum}}_{{f\geq 1}} \, f \, \psi_{p^f} = 1.
$$
In other words, if $\rL=\bigcup_i L_i$,
$$
   \mathlarger{\mathlarger{\sum}}_{f=1}^{\infty} \, f \, \rN_{p^f}(L_i) = n_{L_i} + o(n_{L_i}).
$$

\begin{proof}[Proof of Theorem \ref{almost-unramified}]

We first note that for $L\subset \rL$, the number of prime ideals in $L$ with ramification index $\geq 2$ is $o(n_L)$. Indeed, suppose $p\rO_L = \rp_1^{e_1}\cdots \rp_g^{e_g}$. Because $\rL$ is almost unramified asymptotically positive, we have 
\begin{equation*}
    n_L = \mathlarger{\mathlarger{\sum}}_{f=1}^{\infty} \, f\,\rN_{p^f}(L) + o(n_L) = \sum_{j\leq g} f_j + o(n_L),
\end{equation*}
 where $f_j$ is the residue class index of $\rp_j$. As  $n_L = \sum_j e_j f_j$, we deduce that $e_j \geq 2$ for at most $o(n_L)$ prime ideals above $p$.\\

\noindent
Since $\rL$ is almost unramified asymptotically positive at $p$, for a given $\epsilon > 0$, there exists an integer $N$ such that
$$
  1-\epsilon \leq \mathlarger{\mathlarger{\sum}}_{f=1}^N f \, \psi_{p^f} \leq  1.
$$

\noindent
Thus for $\alpha \in \rL$ and $L = \Q(\alpha)$ with $[L:\Q]$ large enough,

$$
    (1-\epsilon) \,  n_{L} \leq \sum_{f=1}^{N} f\, \rN_{p^f}(L) \leq n_{L}.
$$
\\

Let $
f(x)=a_{n_L} x^{n_L} + a_{n_{L-1}} x^{n_{L-1}} + \cdots + a_0
$ be the minimal polynomial of $\alpha$ over $\Q$ and $\widetilde{L}_{\omega}$ be the splitting field of $f(x)$ over $\Q_p$. Suppose the conjugates of $\alpha$ given by $\alpha_1, \alpha_2, \cdots , \alpha_{n_L}$ be ordered such that

\begin{equation*}
\omega({\alpha_1}) \geq  \omega({\alpha_2}) \geq \ldots \geq \omega({\alpha_r}) \geq 0 > \omega({\alpha_{r+1}}) \geq \ldots \geq \omega({\alpha_{n_L}}).
\end{equation*}

\medskip

\noindent
Let $S_1, S_2, \ldots, S_N$ denote the set of unramified prime ideals in $L$ above $p$, with norm $p, p^2, \ldots, p^N$ respectively. Thus, for each place $\nu_k$ in $S_k$, $L_{\nu_k}$ is an unramified extension of $\Q_p$ of degree $k$. By the uniqueness of the unramified extensions of $\Q_p$ of a given degree and the fact that $\widetilde{L}_{\omega}$ is the splitting field of $f(x)$, it follows that $L_{\nu_k} \subseteq \widetilde{L}_{\omega}$, for $1 \leq k \leq N$. Moreover, each $L_{\nu_k}$ has exaclty $|S_k|$ many roots of $f(x)$ that generate $L_{\nu_k}$. Hence, the roots of $f(x) \in \widetilde{L}_{\omega}$ that generate $L_{\nu_k}$ are in one to one correspondence with $S_k$.

\medskip

For $1 \leq k \leq N$, let $\alpha_{1,\,k}, \, \alpha_{2,\,k}, \ldots, \, \alpha_{|S_k|,\,k}$ be the roots of $f(x)$ that generate $L_{\nu_k}$. Grouping these roots together, we can write
   \begin{align*}
       f(x) = \,\, a_{n_L} \, & (x-\alpha_{1,\,1})\,(x-\alpha_{2,\,1})\,\ldots(x-\alpha_{|S_1|,\,1})\\
       & (x-\alpha_{1,\,2})\,(x-\alpha_{2,\,2})\,\ldots(x-\alpha_{|S_2|,\,2})\\
       & \hspace{0.8 cm} \vdots \hspace{1.6 cm} \vdots  \hspace{2.2 cm} \vdots \\
       & (x-\alpha_{1,\,N})\,(x-\alpha_{2,\,N})\,\ldots(x-\alpha_{|S_N|,\,N}) \,\,g(x)
   \end{align*}
where $g(x)$ has degree $o(n_L)$. The roots are ordered such that for a fixed $k$, 

$$
\omega(\alpha_{1,\,k}) \geq \omega(\alpha_{2,\,k}) \geq \ldots \geq \omega(\alpha_{r_k,\,k}) \geq 0 >  \omega(\alpha_{r_k + 1,\,k}) \geq \omega(\alpha_{|S_k|,\,k}).
$$

\medskip

\noindent
Let $D(f)$ be the discriminant of $f(x)$. Then, we have
\begin{align}\label{Rearrange}
\omega(D(f)) \,\,\, \geq& \,\,\,\, 2 \sum_{1 \leq i < j \leq r} \omega(\alpha_i - \alpha_j) - 2 \sum_{j=r+1}^{n_L} (n_L - j) \, \omega(\alpha_{j})\\
\geq &\,\,\,\, 2 \sum_{k=1}^N \left( \sum_{i < j \leq r_k} \omega(\alpha_{i,k} - \alpha_{j,k}) - \sum_{j=r_k+1}^{|S_k|} (n_L - t_{j,k}) \,\, \omega (\alpha_{j,k}) \right),\nonumber
\end{align}

\noindent
where $t_{j,k}$ is the integer such that $\alpha_{t_{j,k}}=\alpha_{j,k}$. The last inequality follows from the fact that each term on the RHS of \eqref{Rearrange} is non-negative and we are considering only those roots that generate $L_{\nu_k}$, for $1 \leq k \leq N$.\\ 

Let $\nu$ denote the usual $p-$adic valuation on $\Q_p$. Since $\omega$ and $\nu_k$ are the unique extensions of $\nu$ to $\widetilde{L}_{\omega}$ and $L_{\nu_k}$ respectively, we have 

\begin{align*}
 \omega(D(f)) & = {\nu(D(f))} \, {e(\widetilde{L}_{\omega}|\Q_p)}, \\ 
 \omega(\alpha_i - \alpha_j)& ={\nu_k(\alpha_{i,l} - \alpha_{j,k})} \, {e(\widetilde{L}_{\omega} | L_{\nu_k})} \,\,\,\,\, \text{and} \\ 
  \omega(\alpha_j) & = {\nu_k(\alpha_{i})} \, {e(\widetilde{L}_{\omega} | L_{\nu_k})}.
\end{align*}

\medskip

\noindent
Furthermore, $L_{\nu_k} \slash \Q_p$ is unramified and hence, $e(\widetilde{L}_{\omega} | L_{\nu_k}) = e(\widetilde{L}_{\omega} | \Q_p)$. Thus, 
$$
\nu(D(f)) \geq \,\,\,\, 2 \sum_{k=1}^N \left( \sum_{i < j \leq r_k} \nu_k (\alpha_{i,k} - \alpha_{j,k}) - \sum_{j=r_k+1}^{|S_k|} (n_L - t_{j,k}) \,\, \nu_k (\alpha_{j,k}) \right).
$$

\medskip

The roots $\alpha_{1,\,k}, \, \alpha_{2,\,k}, \ldots, \, \alpha_{r_k,\,k}$ lie in $L_{\nu_k}$. Since the residue field of $L_{\nu_k}$ is $\F_{p^k}$, for each $x \in \F_{p^k}$, let $N_{x,k}$ denote the number of $\nu$-integral ${\alpha_{i,k}}$'s that lie in the residue class $x \mod p^k$. Also, note that the numbers $(n_L - t_{j, k})$ appearing in \eqref{Rearrange} are distinct elements from the set $\{n_L-j \, : \, r_k + 1 \leq j \leq n_L \}$ and hence, form a set of distinct non-negative integers. Thus, we have the trivial bound
\begin{align*}
 - \sum_{j=r_k+1}^{|S_k|} (n_L - t_{j,k}) \,\, \nu_k (\alpha_{j,k}) \,\, \geq& \,\,\,\, \sum_{j=0}^{|S_k| - r_k-1} \, j \, 
= \,\, \frac{\left(  |S_k| - r_k-1 \right) \left( |S_k| - r_k \right)}{2}.
\end{align*}

\medskip

\noindent
Therefore
\begin{align*}
\nu(D(f)) \,\,\, \geq& \,\,\,\, \mathlarger{\mathlarger{\sum}}_{k=1}^N \, \, \left( 
\sum_{x \in \F_{p^k}} N_{x,k} \left(N_{x,k}-1 \right)  + \left(  |S_k| - r_k-1 \right) \left( |S_k| - r_k \right) \right) \\
 \geq& \,\,\,\, \mathlarger{\mathlarger{\sum}}_{k=1}^N \, \, \left( 
\sum_{x \in \F_{p^k}} {N_{x,k}}^2 + \left(|S_k|-r_k \right)^2 \right) + O(n_L).
\end{align*}

\medskip

\noindent
Since $ \left(|S_k|-r_k \right) + \sum_{x \in \F_{p^k}} N_{x,k} = |S_k| $, by Cauchy-Schwarz inequality, we obtain

\begin{align*}
\nu_k(D(f)) \,\,\, \geq& \,\,\,\, \mathlarger{\mathlarger{\sum}}_{k=1}^N \, \,  \left( \frac{|S_k|^2}{p^k+1} \right) + O(n_L).
\end{align*}

\medskip

\noindent
As $ |S_k| = k \, \rN_{p^k}(L) + o(n_L)$, we deduce that
\begin{align*}
\nu (D(f)) \,\,\, \geq \,\,\,\, \mathlarger{\mathlarger{\sum}}_{k=1}^N \,\, \left( \frac{k^2 {\rN_{p^k}(L)}^2}{p^k+1} \right) + O(n_L) \,\,\,\, \geq& \,\,\,\, {n_L}^2  \,\, \mathlarger{\mathlarger{\sum}}_{k=1}^N  \, \, \frac{k^2}{p^k+1} \left( \frac{\rN_{p^k}(L)}{n_L} \right)^2 \, + \, o({n_L}^2).
\end{align*}

\noindent
Therefore,
$$
\log D(f) \,\, \geq \,\, {n_L}^2  \,\, \mathlarger{\mathlarger{\sum}}_{k=1}^N  \, \, \frac{k^2}{p^k+1} \left( \frac{\rN_{p^k}(L)}{n_L} \right)^2 \log q \, + \, o({n_L}^2).
$$

\medskip

\noindent
Now, using Theorem \ref{mahler} and taking $N \rightarrow \infty$, we conclude that
\begin{equation*}
    \liminf_{\alpha \in \rL} \, h(\alpha) \geq \mathlarger{\mathlarger{\sum}}_{{k\geq 1}} \, k^3 \, \left(\psi_{p^k}\right)^2 \frac{\log p}{2\left(p^k+1 \right)}.
\end{equation*}
\end{proof}

\bigskip

\section{\bf Canonical height on elliptic curves}\label{canonical_height}

\bigskip

Let $E$ be an elliptic curve defined over a number field $K$. Recall that the canonical height of a point $P \in E(\overline{K})$ is given by 
\begin{equation*}
    \widehat{h}(P) := \lim_{n \rightarrow \infty} \frac{h([2^n] \cdot P)}{4^n},
\end{equation*}
where $h$ is the Weil height. The canonical height $\widehat{h}$ is a quadratic form on $E(\overline{K})$ and satisfies the following properties (see \cite{Neron} or \cite[Chapter IV, \textsection 3 ]{Lang1}) : {% give reference}

\medskip

\begin{enumerate}
    \item[(a)] $\widehat{h}(P) \geq 0$ for all $P \in E(\overline{K})$, and $\widehat{h}(P)=0$ iff $P$ is a torsion point.\\
    \item[(b)] $\widehat{h}([n] \cdot P) = n^2 \, \widehat{h}(P)$ for all $n \in \Z$.\\
    \item[(c)] $\widehat{h}(P_1+P_2)+\widehat{h}(P_1-P_2) = 2\widehat{h}(P_1)+2\widehat{h}(P_2)$.
\end{enumerate}
\medskip

\noindent Lehmer's conjecture for elliptic curves, as formulated in \cite{Anderson and Masser} can be stated as follows.
\begin{conjecture}
    Let $E$ be an elliptic curve over a number field $K$. For any $P\in E(\overline{K})$, let $K(P)$ denote the field obtained by adjoining the coordinates of $P$ to $K$. Then, there exists a constant $C(E,K)$ depending only on $E$ and $K$ such that 
    $$
\widehat{h}(P) > \frac{C(E,K)}{[K(P):K]},
    $$
    for all non-torsion points $P \in E(\overline{K})$.
\end{conjecture}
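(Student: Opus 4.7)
The final statement is the elliptic analogue of Lehmer's classical conjecture, which remains open in full generality, so the plan below is a research strategy rather than a complete proof; I will indicate the step where the obstruction sits. My starting point would be the N\'{e}ron decomposition $\widehat{h}(P) = \sum_v \widehat{\lambda}_v(P)$ into local canonical heights of $E/K$. For a non-torsion $P \in E(\overline{K})$ of degree $d=[K(P):K]$, I would work with the Galois orbit $\{P^{\sigma_1},\ldots,P^{\sigma_d}\}$ and reduce matters to producing, for each sufficiently large $d$, a place $v$ of a suitable finite extension of $K$ at which the contributions $\widehat{\lambda}_v(P^{\sigma_i})$ accumulate to at least $C(E,K)\, d$, after which averaging over $v$ would deliver the required $1/d$ bound.

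The technical engine would be an elliptic analogue of Mahler's inequality combined with the relative-height machinery developed in Section \ref{relative-height}. Concretely, I would bound the ``elliptic discriminant'' $\prod_{i\neq j}\bigl(x(P^{\sigma_i})-x(P^{\sigma_j})\bigr)$ from above by the relative Mahler estimate of Lemma \ref{relative-mahler} transferred through the equality $h(x(P)) = 2\widehat{h}(P)+O_{E,K}(1)$, and from below by a place-by-place pigeonhole argument. At an archimedean place one would invoke Bilu-type equidistribution of conjugates on $E(\C)$ against the Haar measure on the connected component of the real locus. At a non-archimedean place of good reduction above a rational prime $p$, the reduction map $E(K_v) \to \widetilde{E}(\F_{p^k})$ has image of size at most $p^k + O(p^{k/2})$ by Hasse, so pigeonhole forces at least $c\, d^2/(p^k+1)$ pairs of conjugates to be congruent modulo $v$, yielding a non-archimedean valuation gain of precisely the shape that drove the proof of Theorem \ref{main-theorem}.

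The genuine obstacle, and the reason this is still a conjecture, is quantitative rather than structural: turning the collision count into a lower bound of size $C(E,K)/d$ requires exhibiting a prime $v$ at which the implicit constant is \emph{independent} of the particular orbit and does not decay with $d$. The Bogomolov-type framework of this paper achieves the constant lower bound (see Theorem \ref{Elliptic Case I}) by summing collision gains over infinitely many primes in an asymptotically positive tower, a resource which is crucially unavailable when $K$ is fixed and only $K(P)$ varies. The best approximations in the literature, due to Masser, David, Laurent, and Hindry--Silverman, reach $\widehat{h}(P)\gg d^{-1}(\log\log d/\log d)^{\kappa}$ in the CM case via a Dobrowolski-style auxiliary polynomial on $E$; closing the remaining $(\log d/\log\log d)^{\kappa}$ gap, or removing the CM hypothesis, would require either a sharper elliptic auxiliary construction or a new quantitative equidistribution input strengthening Szpiro--Ullmo--Zhang. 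My proposal therefore stops exactly where this paper's techniques stop: they deliver Bogomolov for asymptotically positive extensions, but the uniform $1/d$ Lehmer bound demands an ingredient that, to my knowledge, remains out of reach.
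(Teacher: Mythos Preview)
You have correctly identified that this statement is presented in the paper as a \emph{conjecture}, not a theorem: the paper explicitly states ``This remains open but weaker lower bounds have been obtained in special cases'' and then tabulates the known partial results of Anderson--Masser, Laurent, Masser, and Hindry--Silverman. There is therefore no proof in the paper to compare against, and your decision to frame your proposal as a research strategy with an explicit obstruction is the right call.

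Your diagnosis of where the obstruction lies is accurate and aligns well with the paper's own scope. The paper's machinery---the local-height decomposition \eqref{Neron decomposition}, the pigeonhole collision count over $\widetilde{E}(\F_q)$ via Hasse's bound, and the summation over places---is exactly what drives Theorem \ref{Elliptic Case I}, and as you note, that argument yields a \emph{constant} lower bound on $\widehat{h}$ only because the asymptotically positive hypothesis supplies infinitely many places with controlled residue-field growth. When $K$ is fixed and only $K(P)$ varies, that infinite supply is unavailable, and the collision gain at any single prime is swamped by the growth of $d$. One small correction: the relative Mahler inequality of Lemma \ref{relative-mahler} is developed in the paper for the classical Weil height on $\overline{\Q}$, not for the canonical height on $E$, so transferring it through $h(x(P)) = 2\widehat{h}(P) + O_{E,K}(1)$ would require additional work to control the $O_{E,K}(1)$ term uniformly over the orbit---this is not automatic and is part of why the elliptic Lehmer problem is harder than the classical one. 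Otherwise your summary of the state of the art and the nature of the gap is sound.
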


This remains open but weaker lower bounds have been obtained in special cases. For a fixed number field $K$ and a non-torsion point $P\in E(\overline{K})$, denote by $D= [K(P):K]$. Some classical results are listed below.\\

\begin{center}

\begin{tabular}{l|l|l}

$\widehat{h}(P) \geqslant$ & Restriction on $E$ & Reference \\
\hline
$c D^{-10}(\log D)^{-6}$ & none & Anderson-Masser (1980) \cite{Anderson and Masser} \\
$c D^{-1}\left(\frac{\log \log (D)}{\log (D)}\right)^3$ & $\mathrm{CM}$ & Laurent (1983) \cite{Laurent} \\

$c D^{-3}(\log D)^{-2}$ & none & Masser (1989) \cite{Masser} \\

$c D^{-2}(\log D)^{-2}$ & $j$ non-integral & Hindry-Silverman (1990) \cite{Hindry} \\
\end{tabular}
\smallskip 
\captionof{table}{Bounds on $\widehat{h}$ for $E(\overline{K})$}
\end{center}

\medskip

Analogous to the case of algebraic numbers, for an elliptic curve $E$ over $K$ and $S \subseteq \overline{K}$, $E(S)$ is said to have \textit{Bogomolov} property (B) if there exists a constant $C(E,K) > 0$, such that $
\widehat{h}(P) > C(E,K)$, for all non-torsion points $P \in E(S)$.
\medskip

Let $\rL/\Q$ be an infinite extension satisfying property (B) with respect to the Weil height. Our objective is to understand if $E(\rL)$ satisfies property (B) with respect to the canonical height. The first result of this kind is due to S. Zhang \cite{Zhang}, who showed that for an elliptic curve $E/\Q$, the set of totally real points $E(\Q^{tr})$ satisfies property (B). This can be thought of as the analog of Schinzel's result \cite{Schinzel} for $\Q^{tr}$. In 2002, M. Baker proved property (B) for $E(K^{ab})$ when $E$ is CM or has non-integral $j$-invariant. This is the analog of the theorem of Amoroso-Zannier \cite{Amoroso II} for $K^{ab}$. Moreover, Baker's bound only depends on the $j-$invariant of $E$ and is effectively computable. Later, Silverman \cite{Silverman} proved property (B) for $E(K^{ab})$ when $E$ is non-CM with the constant being ineffective. Some known bounds on $\widehat{h}$ over infinite extensions are summarized in the table below.\\

\medskip

\begin{center}

\begin{tabular}{c|c|c}
${\widehat{h}(P)} \geqslant$ & Restriction on $E$ & Reference \\
\hline
$c D^{-2}$ & none & Silverman (1981) \cite{SilvermanII} \\

$c D^{-1}(\log D)^{-2}$ & none & Masser (1989) \cite{Masser} \\

$c D^{-2 / 3}$ & $j$ non-integral & Hindry-Silverman (1990) \cite{Hindry} \\

$c$ & $j$ non-integral or CM & Baker (2003) \cite{Baker} \\

$c$ & none & Silverman (2003) \cite{Silverman} \\

\end{tabular}
\smallskip 
\captionof{table}{Bounds on $\widehat{h}$ for $E(K^{ab})$}
\end{center}

\medskip

Recall that a subfield $\rL$ of $\overline{\Q}$ is said to be totally $p$-adic of type $(e,f)$ if for any place $v$ in $\rL$ above $p$,  $\rL_{v}/\Q_p$ is a finite extension with ramification index and residue class degree bounded above by $e$ and $f$ respectively. Let $E/K$ be an elliptic curve with semistable reduction at all places above $p \neq 2$. In 2005, M. Baker and C. Petsche \cite{Baker-2} showed that if $\rL / K$ is a totally $p$-adic field of type $(e, f)$ for a prime $p$, then $E(\rL)$ has property (B). This can be regarded as the elliptic analog of Bombieri-Zannier's result Theorem \ref{B-Z}. In this case, they also gave an effective upper bound for the number of torsion points in $E(\rL)$. Let $E/\Q$ be an elliptic curve and $\Q(E_{tor})$ denote the field generated by all torsion points of  $E$ in $\overline{\Q}$. In 2013, Habegger \cite{Habegger} showed that both $\Q(E_{tor})$ and $E(\Q(E_{tor}))$ have property (B) with respect to the Weil and the canonical height respectively. Note that the Kronecker-Weber theorem implies that $\Q^{ab}$ is generated by all the roots of unity in $\overline{\Q}$. Therefore, the field $\Q(E_{tor})$ can be thought of as the elliptic analog of $\Q^{ab}$.\\

Let $\rL/\Q$ be an asymptotically positive infinite extension. By Theorem \ref{main-theorem}, we know that $\rL$ satisfies property (B). Our goal is to prove that $E(\rL)$ satisfies property (B) for any elliptic curve $E$. Towards this, we introduce  some notation.
\medskip

Consider an elliptic curve $E/K$ and a tower $\rL = \{ L_i \}_{i \geq 0}$ containing $K$. For any rational prime power $q=p^k$, denote by $G_q(L_i)$ and $B_q(L_i)$ the number of prime ideals in $L_i$ of norm $q$ with good reduction and split multiplicative reduction at $E/L_i$ respectively. Define
\begin{equation*}
    \xi_q ( \rL ) : = \lim_{i \to \infty}\frac{G_q(L_i)}{[L_i : \Q]} \,\,\,\,\,
    \text{ and } \,\,\,\,
    \chi_q ( \rL ) : = \lim_{i \to \infty}\frac{B_q(L_i)}{[L_i : \Q]}.
\end{equation*}

Since every prime of bad reduction attains split multiplicative reduction after a finite base change, there exists a number field $L/K$ such that $E/L$ has semistable reduction i.e. either good or split multiplicative reduction at every non-archimedean place in $L$. Suppose $\rL=\bigcup_{i=1}^{\infty} L_i$ be an asymptotically positive extension and $L\subset \rL$, then clearly
\begin{equation*}
    \psi_q(\rL) = \xi_q (\rL)+ \chi_q(\rL).
\end{equation*}
On the other hand, if $L\not\subset \rL$, then note that $\rL\otimes_K L = \bigcup_{i=1}^{\infty} L_i\otimes_K L$ is also an asymptotically positive extension. Since $E/L$ has semistable reduction over all non-archimedean primes, we have, for all primes powers $q$,
\begin{equation*}
    \psi_q(\rL\otimes_K L) = \xi_q (\rL\otimes_K L)+ \chi_q(\rL\otimes_K L).
\end{equation*}
\medskip

% Suppose $\rL = \{L_i\}_{i \geq 0}$ is asymptotically positive tower with respect to $q > 0$. Then, for any number field $K'$, the tower $\rL' = \{L_i K'\}_{i \geq 0}$ remains asymptotically positive, though possibly for a modified prime power $q'$.
% \\

% Since every prime of bad reduction attains split multiplicative reduction after a finite base change, there exists a number field $K'/K$ such that $E/K'$ has either good or split multiplicative reduction at every non-archimedean place. Consequently, the modified tower
% $\rL' = \{L_i K'\}_{i \geq 0}$
% satisfies the property that at every place of $L_i K'$, $E$ has either good reduction or split multiplicative reduction. By replacing $\rL$ with $\rL'$, we may thus assume that $\rL$ itself has this property. Therefore, 
% $$
% \psi_q(\rL) = \xi_q (\rL)+ \chi_q(\rL)
% $$
% for all prime power $q$.
% \\

\noindent
\textbf{Example.} Note that, in the above paragraph, the desired reduction property of the elliptic curve $E$ holds for $\rL':= \rL\otimes_K L$, but not necessarily for $\rL$. For instance, let $E/\Q$ be the elliptic curve given by $y^2=x^3+7 x$ and let $\rL:=\Q^{t_7}$, the maximal totally $7$-adic extension of $\Q$. Then, $\rL$ is asymptotically positive with $\psi_7(\rL)=1$ and for any $L_i\subset \rL$, $E/L_i$ has additive reduction at all the places of $L_i$ above $7$. However, in $L=\Q(\sqrt{-1})$, the prime $7$ remains inert and $E$ has split multiplicative reduction over the unique prime above $7$ in $L$. Now, considering the infinite extension $\rL' = \rL\otimes_{\Q} L$, we note that $\rL'$ is asymptotically positive with $\psi_{49}(\rL')>0$ and $E$ has semistable reduction over all the primes above $7$ in $\rL'$.\\

% such that for any number field $L \subseteq \rL'$ containing $K'$, the elliptic curve $E$ has either good or split multiplicative reduction at every non-archimedean place of $L$. Furthermore, while $\psi_7(\rL')$ may vanish, it is important to note that $\psi_q(\rL') > 0$ for some other prime power $q$.

Our main theorem is as follows.

\begin{theorem}\label{Elliptic Case I}
Let $E/ K$ be an elliptic curve and $\rL=\bigcup_{i}L_i$ be an infinite extension over $K$. Then,
\begin{equation*}
\liminf_{P\in E(\rL)}\,\,\widehat{h}(P) \geq \frac{1}{48} 
 \mathlarger{\mathlarger{\sum}}_{\substack{q}} \left( 6 \, \left( \frac{q+1}{q+ 1 + 2\sqrt{q}} \right) \,  \xi_q(\rL) + (q+1) \, c_E \, \chi_q(\rL)  \right) \, \frac{\log q}{q+1},
\end{equation*}
where the sum runs over all prime powers $q$ and $c_E$ is a positive constant depending on $E$ and $K$. 
\end{theorem}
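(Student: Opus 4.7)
The plan is to transport the discriminant/relative-height strategy of Theorem \ref{main-theorem} to the elliptic setting, replacing the Weil height by the canonical height through its decomposition into N\'eron local heights $\lambda_v$. For $P\in E(\rL)$ with $L' := K(P)$ of degree $m$ over some auxiliary field $L\subset \rL$, the identity
$$\widehat{h}(P) \,=\, \frac{1}{[L':L]}\,\mathlarger{\mathlarger{\sum}}_{v\in M_{L'}} [L'_v:L_v]\,\lambda_v(P)$$
plays the role of \eqref{decomposition}, and the role of the relative discriminant is played by $\sum_{i<j}\lambda_v(P_i-P_j)$ summed over the $L$-conjugates $P_1,\dots,P_m$ of $P$. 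Via the N\'eron quasi-parallelogram law, this sum is bounded below in terms of $\sum_i \lambda_v(P_i)$ and ultimately, after summing over $v$, in terms of $\widehat{h}(P)$, giving the elliptic analogue of Lemma \ref{relative-mahler} that Section \ref{local-height} will presumably establish.

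First I would fix $X>1$ and apply Proposition \ref{eventual-split} to produce an auxiliary number field $L$, with $K\subset L\subset \rL$, such that every prime of $L$ of norm $q\leq X$ with $\psi_q>0$ almost splits in $\rL$. After a harmless base change of $E$ to $L$, every bad-reduction prime may be assumed to have split multiplicative reduction, so the almost-split primes of norm $q$ partition into two classes of densities $\xi_q$ and $\chi_q$. As in the proof of Theorem \ref{main-theorem}, for each $P\in E(\rL)$ the conjugates $P_1,\dots,P_n$ all lie in $E(L_\nu)$ except for $o(n)$ many, for every almost-split $\rp$ of norm $q\leq X$, with $\nu$ the place of $L$ attached to $\rp$.

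At a good-reduction prime $\rp$ of norm $q$, the reduction map $E(L_\nu)\to \widetilde{E}(\F_q)$ is a group homomorphism whose image has size at most $q+1+2\sqrt{q}$ by Hasse's bound. Pigeonholing $n+o(n)$ conjugates inside $\widetilde{E}(\F_q)$ produces at least $\tfrac{n^2}{q+1+2\sqrt{q}}+O(n)$ ordered pairs $(i,j)$ for which $P_i-P_j$ lies in the kernel of reduction (the formal group), and for such pairs the local canonical height $\lambda_\rp(P_i-P_j)$ is bounded below by an explicit positive multiple of $\log q$. This reproduces precisely the $\xi_q$ term with its factor $\frac{q+1}{q+1+2\sqrt{q}}$, the gain over the Weil-height case coming from pigeonholing into $\widetilde{E}(\F_q)$ rather than into $\F_q$ itself. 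At a split multiplicative prime $\rp$ of norm $q$, Tate's $\rp$-adic uniformization $E(L_\nu)\cong L_\nu^{\times}/q_E^{\Z}$ identifies $\lambda_\rp$ with the logarithmic valuation on $L_\nu^\times$ up to a bounded correction depending only on $E/K$; applying the residue-class counting of Theorem \ref{main-theorem} to the corresponding multiplicative representatives then yields the $\chi_q$ contribution, with the constant $c_E$ absorbing the Tate-parameter comparison.

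Summing the two local contributions over all almost-split primes, passing to the compositum of auxiliary fields as $X\to\infty$, and inserting the result into the elliptic Mahler-type inequality yields the claimed bound. The main obstacle will be the split multiplicative case: one must extract an effective $c_E$, depending only on $E$ and $K$ and not on $L$ or the prime $\rp$, from the Tate parameter $q_E$ and the normalization of $\lambda_\rp$, and one must verify that this constant survives unchanged when forming the compositum indexed by all prime powers with $\psi_q>0$ simultaneously. A secondary but delicate point is showing that the $o(n)$ errors from Proposition \ref{eventual-split} remain $o(n)$ uniformly across the (eventually infinite) family of primes that contribute to the right-hand side.
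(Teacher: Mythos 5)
Your overall blueprint --- decompose $\widehat h$ into N\'eron local heights, control archimedean places by an Elkies-type bound, pigeonhole into $\widetilde E(\F_q)$ via Hasse at good primes, and use the Tate parametrization with a Fourier (Bernoulli-polynomial) bound at split multiplicative primes --- is exactly the machinery the paper deploys. But the way you package the points is genuinely different from what the paper does. You work with the $[L(P):L]$ Galois conjugates $P_1,\dots,P_m$ of a single $P\in E(\rL)$ over an auxiliary splitting field $L$, trying to build an ``elliptic Mahler inequality'' fully parallel to Lemma \ref{relative-mahler}. The paper instead takes an \emph{arbitrary} finite set $Z=\{P_1,\dots,P_N\}\subset E(L)$, uses the exact global parallelogram law to obtain $\widehat h(Z)\ge\frac{1}{4N(N-1)}\sum_{i\ne j}\widehat h(P_i-P_j)$, decomposes each $\widehat h(P_i-P_j)$ locally, and then runs the residue-class counts on those $N$ points directly; varying $Z$ over all $N$-point subsets leaves at most $N$ exceptional points in $E(L)$, and letting $N\to\infty$ and $L\to\rL$ gives the liminf. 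The average-over-$N$-points route needs no Northcott appeal for elliptic curves and sidesteps the issue of what to do when $[L(P):L]$ stays bounded --- which your conjugate-based argument would have to confront explicitly, since a small-height $P$ could well lie in $E(L)$ and then has no $L$-conjugates to feed into the count. If you add the Northcott step for points of bounded degree over each $L_i$, your version should close, but it is worth knowing that the published proof avoids it.

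Two smaller points. Your ``relative'' decomposition $\widehat h(P)=\frac{1}{[L':L]}\sum_{v\in M_{L'}}[L'_v:L_v]\lambda_v(P)$ is not the actual N\'eron decomposition; the normalisation is $\widehat h(P)=\sum_{v\in M_{L'}}\frac{[L'_v:\Q_v]}{[L':\Q]}\lambda_v(P)$, and one should work directly with the global parallelogram law on canonical heights rather than a local ``quasi-parallelogram,'' which would introduce uncontrolled local error terms. Finally, the effectivity of $c_E$ that you rightly flag as a danger is handled in the paper by the observation that only finitely many places of the base field $K$ divide $j_E$: writing $k_\nu = k_{\nu\cap K}\cdot e_{\nu\mid\nu\cap K}$ for a place $\nu$ of $L$ over such a $K$-prime gives $k_\nu\le e_\nu/c_E$ with $c_E$ depending only on $E/K$, after which the split multiplicative contribution is uniform in $L$ and in the prime.
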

\medskip

This answers a question raised by P. Fili and Z. Miner \cite{Fili-1} about obtaining a quantitative lower bound for canonical height of points on elliptic curves over totally $p$-adic extensions. This question appears in the pre-print version of the paper (see page 4 of \url{https://math.okstate.edu/people/fili/equi-splitting.pdf}).\\

\medskip

As an immediate corollary to Theorem \ref{Elliptic Case I} and Lemma \ref{asym-new}, we deduce Theorem \ref{elliptic-analogue} as stated in the introduction.
\begin{corollary}
    Let $E/K$ be an elliptic curve. Let $\rL/\Q$ be an asymptotically positive extension containing $K$. Then $E(\rL)$ satisfies property (B).
\end{corollary}
\begin{proof}
    Let $L/K$ be a finite extension such that $E/L$ has good reduction or split multiplicative reduction over all non-archimedean places in $L$. Since $\rL/\mathbb{Q}$ is asymptotically positive, Lemma~\ref{asym-new} implies that $\rL \otimes_K L$ is also an asymptotically positive extension and
    \begin{equation*}
    \psi_q(\rL\otimes_K L) = \xi_q (\rL\otimes_K L)+ \chi_q(\rL\otimes_K L) > 0
\end{equation*}
    for some prime power $q$. Now, Theorem \ref{Elliptic Case I}, implies that $E(\rL\otimes_K L)$ satisfies property (B) and hence, $E(\rL)$ satisfies property (B).
\end{proof}
\medskip

Let $E/\Q$ be an elliptic curve with good reduction at $p$. For $q=p^m$ and any infinite extension $\rL/\Q$, by definition $\psi_q(\rL) = \xi_q(\rL)$. Hence, we have the following corollary. 
\begin{corollary}\label{Elliptic_Case_II}
Let $E/ \Q$ be an elliptic curve and $\rL$ be an infinite extension of $\Q$. Then,
\begin{equation*}
\liminf_{P\in E(\rL)}\,\widehat{h}(P) \geq \frac{1}{8}  \sideset{}{'}\sum_{\substack{q}} \psi_q(\rL) \, \frac{\log q}{q+ 1 + 2\sqrt{q}},
\end{equation*}
where $\sideset{}{'}\sum$ runs over all prime powers $q=p^m$ such that $E$ has good reduction at $p$. In particular, if $\psi_{p^m}>0$ and $E$ has good reduction at $p$, then $E(\rL)$ satisfies property (B).
\end{corollary}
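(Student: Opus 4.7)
The plan is to mimic the proof of Theorem \ref{main-theorem}, replacing the logarithmic Weil height by the canonical height $\widehat h$ and the relative Mahler measure by the decomposition into N\'eron local heights $\widehat \lambda_v$ developed in Section \ref{local-height}. Fix $P \in E(\rL)$, set $L = K(P)$, and let $P_1, \ldots, P_n$ be the $K$-conjugates of $P$, where $n = [L:K]$. The parallelogram law $\widehat h(P_i - P_j) + \widehat h(P_i + P_j) = 2\widehat h(P_i) + 2\widehat h(P_j)$ together with $\widehat h(P_i) = \widehat h(P)$ gives $\widehat h(P_i - P_j) \le 4\,\widehat h(P)$, hence
\[
\sum_{i \neq j} \sum_{v \in M_L} \widehat \lambda_v(P_i - P_j) \;=\; \sum_{i \neq j} \widehat h(P_i - P_j) \;\le\; 4\, n(n-1)\, \widehat h(P).
\]
The strategy is to produce a matching lower bound for the left-hand side place by place at non-archimedean primes of good and split multiplicative reduction, and then invert.

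At a non-archimedean $v$ of norm $q$ with good reduction, the Hasse bound gives $|\widetilde E(\F_q)| \le q + 1 + 2\sqrt{q}$. Let $N_x$ denote the number of conjugates $P_i$ reducing to $x \in \widetilde E(\F_q)$, so that $\sum_x N_x = n$. Any two conjugates in the same residue class differ by an element of the kernel of reduction $E^1(L_v)$; on $E^1(L_v)$ the local N\'eron height is given by an explicit formula via the formal group logarithm and is bounded below by an absolute positive multiple of $\log q$. Applying Cauchy--Schwarz then produces
\[
\sum_{i \neq j} \widehat \lambda_v(P_i - P_j) \;\ge\; \frac{c_0\, n^2 \log q}{q + 1 + 2\sqrt{q}} + O(n \log q),
\]
where $c_0 > 0$ is the normalization constant of the local N\'eron height from Section \ref{local-height}.

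At a place $v$ of norm $q$ of split multiplicative reduction, Tate's uniformization $E(\overline{L_v}) \cong \overline{L_v}^{*}/q_E^{\Z}$ gives an explicit expression for $\widehat \lambda_v(Q)$ as a quadratic function of the fractional part of $v(u)/v(q_E)$ for a Tate lift $u$ of $Q$. Pigeonholing the conjugates both in the component group $\Z/v(q_E)\Z$ and in $\F_q^{*}$, followed by the analogous Cauchy--Schwarz estimate, produces
\[
\sum_{i \neq j} \widehat \lambda_v(P_i - P_j) \;\ge\; c_E\,(q+1)\,n^2\,\frac{\log q}{q+1} + O(n \log q),
\]
where $c_E > 0$ depends only on $E/K$ via a uniform lower bound for $v(q_E)$ across the (finitely many) split multiplicative places of $E/K$. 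Summing these local lower bounds over all non-archimedean $v \in M_L$ of good or split multiplicative reduction, using the analog of Proposition \ref{eventual-split} for the densities $\xi_q$ and $\chi_q$ (so that for each $q \le X$ the number of such primes in $L$ is asymptotic to the corresponding density times $n$), dividing through by $4 n(n-1)$, and then letting $n \to \infty$ followed by $X \to \infty$, produces the desired lower bound on $\liminf \widehat h(P)$. The constant $1/48$ emerges from the combination of the factor $1/4$ from the parallelogram-law upper bound, a factor $1/2$ from Cauchy--Schwarz pigeonholing, and the explicit normalization constants at good reduction places, which together produce the prefactor $6$ in front of the good-reduction term.

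The main obstacle will be the local analysis at split multiplicative places: one must unwind the Tate parametrization, execute the pigeonhole argument across both the component group and the residue-field units, and extract a positive constant $c_E$ that is uniform in $v$. A secondary difficulty is verifying that the contributions from archimedean places and from the finitely many additive bad-reduction places do not spoil the estimate --- for archimedean $v$ this uses the standard formula for the elliptic Green's function (which tends to $+\infty$ near the identity and is bounded away from it) together with a mean-value estimate on the $P_i$, while for additive places it follows from finiteness of the component group combined with the explicit form of $\widehat \lambda_v$ on the connected component, showing that these contributions are $O(n\log n)$ and thus negligible against the $n^2$ main terms.
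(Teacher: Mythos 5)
The statement you are asked to prove is, in the paper, a direct corollary of Theorem~\ref{Elliptic Case I}: when $E/\Q$ has good reduction at $p$, every place of $\rL$ above $p$ is a place of good reduction, so $\xi_{q}(\rL) = \psi_{q}$ and $\chi_{q}(\rL)=0$ for $q = p^{m}$. Restricting the sum in Theorem~\ref{Elliptic Case I} to such $q$ (and discarding the nonnegative remaining terms) gives
\[
\frac{1}{48}\cdot 6\cdot\frac{q+1}{q+1+2\sqrt{q}}\cdot\psi_q\cdot\frac{\log q}{q+1}
\;=\;
\frac{1}{8}\,\psi_q\,\frac{\log q}{q+1+2\sqrt{q}},
\]
and the corollary follows with no further work. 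Your proposal does not take this route: instead you attempt to re-derive essentially the whole of Theorem~\ref{Elliptic Case I} from first principles, reproducing the parallelogram decomposition, Elkies' archimedean estimate, the pigeonhole--Cauchy--Schwarz count at good-reduction places, and the Tate/Fourier analysis at split multiplicative places. That overshoots what the statement requires, and the redundant portions (all of the split multiplicative analysis, the discussion of $\chi_q$ and its density propagation) are simply irrelevant for this corollary.

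Beyond the efficiency issue, there are substantive gaps in the re-derivation that you would need to close if you insisted on going this way. First, at a good-reduction place $\nu$ of norm $q = p^{f}$ and ramification index $e_{\nu}$, the lower bound for $\lambda_{\nu}(P_i - P_j)$ when $P_i\equiv P_j\pmod{\nu}$ is $\tfrac{1}{2}\,\tfrac{\log p}{e_{\nu}}$, not an absolute multiple of $\log q$; the eventual $\log q$ appears only after weighting by $[L_{\nu}:\Q_p]/[L:\Q]$ in the N\'eron decomposition (this bookkeeping is exactly what produces the clean constant $\tfrac{1}{2}\cdot\tfrac{1}{q+1+2\sqrt{q}}$). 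Second, you work with the $K$-conjugates of a single point $P$, rather than with an arbitrary $N$-point subset $Z\subset E(L)$ as the paper does. The conjugate version gives a lower bound only once $n=[K(P):K]$ is large; you then need a Northcott-type finiteness statement for $E$ over bounded-degree extensions to handle small-degree points, and you do not mention it. Third, deducing that the number of good-reduction places of norm $q$ in $L=K(P)$ is $\sim\psi_q\,[L:\Q]$ requires fixing the base field $K$ far enough up the tower and a monotonicity argument (the analogue of Lemma~\ref{lemma-1} and Proposition~\ref{eventual-split}); saying ``using the analog of Proposition~\ref{eventual-split} for $\xi_q$ and $\chi_q$'' without stating it leaves a real hole. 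None of these is fatal in principle, but each needs to be made precise, and all of them are bypassed if you simply cite Theorem~\ref{Elliptic Case I}.
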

\medskip

The condition in the corollary above can be relaxed from primes of good reduction to primes of potentially good reduction for $E$. This is because if $E/\Q$ has potentially good reduction at $p$, then there exists a finite extension $K/\Q$ such that $E/K$ has good reduction at all places of $K$ lying above $p$.\\

Note that the lower bound in the above corollary is independent of the elliptic curve. By Corollary \ref{Elliptic_Case_II}, one may vary over all elliptic curves $E/\Q$ with good reduction at $p$ and still obtain a uniform lower bound on $\widehat{h}$. It is worthwhile comparing this to Lang's conjecture \cite[p. 92]{Lang1}, which states that for a fixed number field $K/\Q$, there exist constants $C_1$ and $C_2$ depending only on $K$ such that for any elliptic curve $E/K$
\begin{equation*}
    \widehat{h}(P) \geq C_1 \, \log N_{K/\Q}\,{\Delta} \, - \, C_2,
\end{equation*}
where $\Delta$ is the discriminant of $E$ over $K$.

\noindent
 
\medskip

\section{\bf A lower bound on local height}\label{local-height}
\bigskip

Let $K$ be a number field. Denote by $M_K$ and $M_K^{\infty}$ the set of all places and the set of archimedean places of $K$ respectively. Suppose that $v \in M_K \setminus M_K^{\infty}$ be a non-archimedean place corresponding to a prime ideal $\rp$ in $K$ above $p$. Denote by $|\cdot|_{v}$ the unique absolute value on $K_{v}$ which extends the usual $p$-adic absolute value on $\Q_p$.\\

Let $E/K$ be an elliptic curve with point of infinity $O$. Similar to Weil height, the canonical height $\widehat{h}$ on $E$ has a decomposition into local height functions $\lambda_{v} : E(K_v) \rightarrow \R$ (called N\'{e}ron local heights) for each place $v$ in $M_K$ (see \cite[Chapter VI, Theorem 1.1]{ATEC}). For any $P \in E(K) \setminus \{ O\}$, we have
\begin{equation}\label{Neron decomposition}
\widehat{h}(P) = \sum_{v \in M_K} \frac{[K_{v} : {\Q}_{v}]}{[K : \Q]} \lambda_{v}(P).
\end{equation}

\medskip

\noindent
Unlike the canonical height $\widehat{h}$, the local height  $\lambda_v$ can take negative values, but such places are only finitely many. We start by discussing explicit formula for $\lambda_v$ and some lower bounds on $\lambda_v$ for both archimedean and non-archimedean places.

\subsection{Archimedean place} Let $v$ be an Archimedean place of $K$. Then, the $v-$adic completion of $K$, $K_v$  is either $\R$ or $\C$. Thus, we can assume $E$ to be defined over $\C$. By uniformization theorem for elliptic curves over $\C$, we get the following isomorphism
$$
E(\C) \hspace{0.5cm} \cong \hspace{0.5cm} \C /\Lambda_{\tau} \hspace{0.5cm} \xrightarrow[z \mapsto u=e^{2 \pi i z}]{\cong} \hspace{0.5cm} \C^* \big/ q_{\tau}^{\Z},
$$
where $\Lambda_{\tau}$ is the lattice $\Z + \tau \Z$ in $\C$ for some $\tau$ in the upper half plane and $q_{\tau} = e^{2 \pi i \tau}$. Then, the local height function
$$
\lambda_v : E(\C) \setminus \{O\} \rightarrow \R
$$
is given by
\begin{equation}\label{Archimedean}
\lambda_v(P) = \, -\frac{1}{2} B_2 \left( \frac{\text{Im}(z)}{\text{Im}(\tau)} \right) \log |q_\tau| - \log|1-u| - \sum_{n \geq 1} \log \left|(1-{q_\tau}^n \, u)(1-{q_\tau}^n \, u^{-1})\right|,
\end{equation}
where $B_2(T) = T^2 - T + \frac{1}{6}$ is the second Bernoulli polynomial for $0 \leq T \leq 1$, extended to $\R$ periodically and $z\,(\text{resp. } u)$ is the image of $P$ in $\C /{\Lambda_\tau}$ (resp. $\C^* \big/{q_{\tau}^{\Z}}$).  The periodicity of $B_2(T)$ ensures that the definition of $\lambda_v(P)$ is independent of the representative $z$ in $\C/\Lambda_\tau$. Further, the convergence of the series is guaranteed by the fact that $\operatorname{Im}(\tau) > 0$. 

\medskip

\noindent
% Since, $|q|, |u| \leq 1$, we get 

% $$
% \lambda_v(z) \geq -\frac{1}{2} B_2 \left( \frac{\text{Im}(z)}{\text{Im}(\tau)} \right) \log |q| - C_1,
% $$
% for some positive constant $C_1 >0$, depending only on $E$ and $K$.
% The following proposition provides a lower bound on the archimedean local height discrepancy of points on $E(\C)$.

\begin{proposition}\label{Elkies}
    Let $P_1, P_2,\ldots,P_N \in E(\C) \setminus \{O \}$ be $N$ distinct points. Then, there is a constant $b(E) > 0$ depending on $E$ such that
    \begin{equation}\label{Archimedean I}
\sum_{\substack{1 \leq i,\, j \leq N  \\ i \neq j}}\lambda_v(P_i - P_j) \geq -\frac{1}{2} N \log N - b(E) \, N.
    \end{equation}
\end{proposition}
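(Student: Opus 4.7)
The proof is a classical Elkies-type estimate on pairwise sums of Green's function values on an elliptic curve, obtained by applying Hadamard's inequality to a determinant of theta functions. The argument I would set up proceeds in three steps.

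First, I would reformulate $\lambda_\nu$ in terms of the Jacobi theta function. Substituting the product expansion
$$\vartheta_1(z, \tau) \;=\; 2 q_\tau^{1/8} \sin(\pi z) \prod_{n \geq 1} (1 - q_\tau^n)(1 - q_\tau^n u)(1 - q_\tau^n u^{-1})$$
into \eqref{Archimedean} and using $\prod_{n \geq 1}(1 - q_\tau^n) = \eta(\tau)\, q_\tau^{-1/24}$, a short computation gives
$$\lambda_\nu(P) \;=\; -\log|\vartheta_1(z, \tau)| \,+\, \pi\,\frac{(\mathrm{Im}\, z)^2}{\mathrm{Im}(\tau)} \,+\, \log|\eta(\tau)|,$$
where $\eta$ is the Dedekind eta function. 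The right-hand side is the Arakelov Green's function of $E(\C)$ with mean-zero normalization; the constant $\log|\eta(\tau)|$ is exactly what forces $\int_{E(\C)} \lambda_\nu(P)\, dA(P) = 0$ against the Haar probability measure, a property that turns out to be essential for the argument to close.

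Second, I would establish the key Elkies-type upper bound on the theta product
$$\sum_{i<j} \log |\vartheta_1(z_i - z_j, \tau)| \;\leq\; \tfrac{1}{4}\, N \log N \,+\, c(E)\, N.$$
To this end, fix a basis $\{\Theta^{(k)}\}_{k=0}^{N-1}$ of the $N$-dimensional space $H^0(E, \mathcal{O}(N \cdot O))$ of theta functions of order $N$, normalized by the factor $e^{-\pi (\mathrm{Im}\, z)^2/\mathrm{Im}(\tau)}$ so that each $\Theta^{(k)}$ descends to a bounded function on the fundamental parallelogram with uniform $L^\infty$ bound $B(E)$. Form the $N \times N$ matrix $M = (\Theta^{(k)}(z_i))_{i,k}$, where $z_i$ lifts $P_i$. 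By the classical Frobenius theta identity, $|\det M|$ factors up to explicit correction terms (a theta value at $\sum_i z_i$ and a $\tau$-dependent constant) as $\prod_{i<j} |\vartheta_1(z_i - z_j, \tau)|$ times an exponential factor $\exp(-\pi \sum_i (\mathrm{Im}\, z_i)^2 / \mathrm{Im}(\tau))$. On the other hand, Hadamard's inequality applied column-by-column yields $|\det M|^2 \leq \prod_k \|\mathrm{col}_k\|^2 \leq N^N\, B(E)^{2N}$. Combining and taking logarithms isolates the displayed upper bound on the theta-product sum.

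Third, I would translate back via the reformulation of Step 1. Using the evenness $\lambda_\nu(-P) = \lambda_\nu(P)$ (which follows from $\vartheta_1$ being odd) to convert $\sum_{i \neq j} = 2 \sum_{i<j}$, the $-\log|\vartheta_1|$ contribution becomes $-\tfrac{1}{2} N \log N + O(N)$ by Step 2, while the quadratic imaginary-part terms $\pi (\mathrm{Im}(z_i - z_j))^2 / \mathrm{Im}(\tau)$ and the additive constants $N(N-1) \log|\eta(\tau)|$ --- which naively contribute $O(N^2)$ --- cancel against the quadratic exponential factor and the sum-point theta value coming out of the Frobenius identity, leaving only an $O(N)$ remainder. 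Taken together this gives the required lower bound with an effective $b(E) > 0$.

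The main obstacle is precisely this final cancellation: one must establish the exact shape of the Frobenius theta identity in the chosen normalization and verify that the Arakelov mean-zero property from Step 1 is exactly the one produced by the exponential factor in the Frobenius formula, so that the otherwise $O(N^2)$ corrections from constants and quadratic terms collapse to $O(N)$. Once the two pieces of normalization are matched correctly, the proposition follows cleanly from Hadamard's inequality.
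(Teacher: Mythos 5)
The paper itself does not supply a proof of this proposition; it cites Elkies' bound as treated in Hriljac, in Lang's \emph{Introduction to Arakelov Theory} (Chapter VI, Theorem 5.1), and, in explicit form, in Baker--Petsche's Proposition~4, all of which proceed via the spectral (Fourier) decomposition of the Arakelov--Green function on $E(\C)$ rather than via a theta-determinant. Your Step~1 (the rewriting $\lambda_\nu(P) = -\log|\vartheta_1(z,\tau)| + \pi(\operatorname{Im} z)^2/\operatorname{Im}\tau + \log|\eta(\tau)|$) is correct and is the standard starting point, but the route you then take in Steps~2--3 is genuinely different from those sources, and it has a real gap.

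The problem is the sum-point theta factor. In the Frobenius-type factorization
\[
\det\bigl(\Theta^{(k)}(z_i)\bigr)_{i,k} \;=\; c(\tau)\,\vartheta_1\!\Bigl(\textstyle\sum_i z_i,\tau\Bigr)\,\prod_{i<j}\vartheta_1(z_i-z_j,\tau),
\]
the determinant $\det M$ vanishes identically whenever $\sum_i z_i \equiv 0 \pmod{\Lambda_\tau}$, even though the $z_i$ are pairwise distinct. Thus Hadamard's inequality yields the trivial bound $0 \le N^{N/2} B(E)^N$ in precisely those configurations, and one cannot ``cancel'' or ``divide through by'' $\vartheta_1(\sum_i z_i)$ to recover an upper bound on $\sum_{i<j}\log|\vartheta_1(z_i-z_j)|$. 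This is not a peripheral degenerate case: for instance, any configuration built from $N$-torsion points with $\sum z_i\equiv 0$ falls squarely into it, and these are close to the extremal configurations for the inequality. The claim in Step~3 that ``the additive constants $N(N-1)\log|\eta(\tau)|$ \ldots\ cancel against \ldots\ the sum-point theta value'' cannot be right as stated, since $\log|\widetilde\vartheta_1(\sum z_i)|$ is a single $O(1)$-bounded-above (and unbounded below) quantity, not something of size $O(N^2)$; if anything it is $\log|c(\tau)|$ that carries the $O(N^2)$ $\eta$-power, and matching that exactly depends delicately on the chosen normalization of the $\Theta^{(k)}$, which you have not pinned down. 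Separately, a plain Hadamard estimate on an $N\times N$ matrix with $O(1)$-bounded entries gives $\log|\det\widetilde M| \le \tfrac{N}{2}\log N + O(N)$, not the $\tfrac{N}{4}\log N$ asserted in Step~2; so even were the vanishing issue resolved, this route as written would only yield $-N\log N - O(N)$ rather than $-\tfrac12 N\log N - O(N)$. To run a theta-determinant argument successfully one would need an extra device (e.g.\ introducing an auxiliary point to move $\sum z_i$ off the theta divisor, or averaging over translates of the line bundle) together with a sharper pointwise input than raw Hadamard. The cited proofs sidestep all of this by expanding the Green's function in Laplacian eigenfunctions and truncating the resulting positive-definite quadratic form (see Lemma~\ref{Fourier series} for the non-archimedean analogue carried out explicitly in this paper), which is what the authors are implicitly relying on.
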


This proposition follows from a result of Elkies, slightly different proof of which is given in \cite[pp. 218]{Hriljack} and \cite[Chapter VI, Theorem 5.1]{Lang}. An explicit version using results from Fourier analysis can be found in \cite[Proposition 4]{Baker-2}.

\subsection{Non-archimedean place}
For a non-archimedean place $v$, let $K_v$ be the completion of $K$. Thus, $E/K$ can be seen as an elliptic curve over $K_v$. Denote by $\Delta$ the discriminant of a Weierstrass equation of $E$. Consider, the minimal Weierstrass equation 
\begin{equation}\label{minimal Weierstrass}
y^2 + a_1xy + a_3y = x^3 + a_2 x^2 + a_4x+ a^6
\end{equation}
of $E$ corresponding to $v$, i.e., $v(\Delta)$ is minimal, subject to the condition that the coefficients are $v$-integral.
Let $\widetilde{E}$ be the curve obtained by reduction of Weierstrass equation of $E$ at $v$. Define,
$$
E_0(K_v) = \left\{ P \in E(K_v) \, | \, \widetilde{E} \text{ is smooth at } \overline{P} \right\},
$$
where $\overline{P}$ is the reduction of $P$ at $v$. Thus, $E_0(K_v)$ consists of all the points of $E(K_v)$ that reduce to a non-singular point after reducing modulo $v$. An explicit formula for $\lambda_v$ at these points is given as follows \cite[Chap. VI, Theorem 4.1]{ATEC}.

\begin{proposition}
    Let $E/K_v$ be an elliptic curve and $v$ be non-archimedean place of $K$. Let $\Delta$ be the discriminant of a Weierstrass equation of $E$ with $v-$integral coefficients. Then the local height function
    \begin{equation}\label{smooth points}
    \lambda_v(P) = \frac{1}{2} \operatorname{max} \left\{ v(x(P)^{-1}), 0 \right\} + \frac{1}{12} v(\Delta)
    \end{equation}
    for all $P \in E_0(K_v) \, \backslash \, \{O\} $.
\end{proposition}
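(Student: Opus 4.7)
The plan is to verify that the right-hand side of \eqref{smooth points}, call it $f(P)$, agrees with the N\'eron local height on $E_0(K_\nu)$. Recall that $\lambda_\nu$ is characterized, up to an additive constant, as the function on $E(K_\nu)\setminus\{O\}$ that is locally bounded away from $O$, has the logarithmic singularity $\lambda_\nu(P)+\tfrac12\nu(x(P))\to\text{const}$ as $P\to O$, and satisfies the quasi-parallelogram identity
\begin{equation*}
\lambda_\nu(P+Q)+\lambda_\nu(P-Q)=2\lambda_\nu(P)+2\lambda_\nu(Q)-\nu\bigl(x(P)-x(Q)\bigr)
\end{equation*}
whenever $P,Q,P\pm Q$ are all distinct from $O$. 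It therefore suffices to check that $f$ has the right singularity at $O$ and obeys this identity on $E_0(K_\nu)$; the constant $\tfrac{1}{12}\nu(\Delta)$ is pinned down by compatibility with the global decomposition \eqref{Neron decomposition}.

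First I would decompose $E_0(K_\nu)$ into the kernel of reduction $E_1(K_\nu)$ and its complement. For $P\in E_0(K_\nu)\setminus E_1(K_\nu)$, $x(P)$ is $\nu$-integral, so $\max\{\nu(x(P)^{-1}),0\}=0$ and $f(P)=\tfrac{1}{12}\nu(\Delta)$ is locally constant. For $P\in E_1(K_\nu)$, in terms of the formal parameter $t=-x/y$ on $\widehat{E}$ one has $\nu(x(P))=-2\nu(t(P))<0$, hence $f(P)=\nu(t(P))+\tfrac{1}{12}\nu(\Delta)$. This exhibits both the continuity on $E_1(K_\nu)\setminus\{O\}$ and the prescribed singularity $-\tfrac12\nu(x(P))$ as $P\to O$, so only the parallelogram identity remains.

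The core of the argument is a case analysis indexed by which of $P$, $Q$, $P\pm Q$ lie in $E_1(K_\nu)$. When none do, all four local-height values equal $\tfrac{1}{12}\nu(\Delta)$ and the identity reduces to $\nu(x(P)-x(Q))=0$, which holds because distinct smooth reductions have distinct $x$-coordinates modulo $\nu$. When $P,Q\in E_1(K_\nu)$, the identity collapses to the additivity of $\nu\circ t$ on the formal group $\widehat{E}$, immediate from $t(P+Q)=t(P)+t(Q)+O(t^2)$. The intermediate mixed cases are handled via the explicit addition law on the minimal Weierstrass model,
\begin{equation*}
x(P+Q)+x(P-Q)+x(P)+x(Q)=\left(\frac{y(P)-y(Q)}{x(P)-x(Q)}\right)^{\!2}-a_1\,\frac{y(P)-y(Q)}{x(P)-x(Q)}-a_2,
\end{equation*}
combined with the product formula for $x(P+Q)\,x(P-Q)$ and the fact that reduction modulo $\nu$ is a homomorphism from $E_0(K_\nu)$ onto the smooth locus of $\widetilde{E}$.

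The principal obstacle is the bookkeeping in these mixed cases: when exactly one of $P\pm Q$ drops into $E_1(K_\nu)$, the jump in the $\max$ on the left of the parallelogram identity must be exactly compensated by a non-zero contribution from $\nu(x(P)-x(Q))$ on the right, and this compensation must be extracted from a careful valuation count applied to the numerator and denominator of the addition formula. Once the identity is established in every case, uniqueness of the N\'eron local height forces $\lambda_\nu=f+c$ on $E_0(K_\nu)$ for some constant $c$; and $c=0$ is obtained either by invoking \eqref{Neron decomposition} for a test point of everywhere good reduction, or by checking the limit $\widehat{h}(P)=\lim_n h(2^n P)/4^n$ against the proposed formula in a single explicit example, which pins down the discriminant normalization.
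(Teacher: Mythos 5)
The paper gives no proof of this proposition; it is quoted verbatim as a standard result and attributed to Silverman's \emph{Advanced Topics in the Arithmetic of Elliptic Curves}, Chapter~VI, Theorem~4.1. So there is no in-paper argument to compare against, only the citation. Your strategy --- characterize $\lambda_\nu$ axiomatically by a quasi-parallelogram identity together with continuity and the prescribed singularity at $O$, split $E_0(K_\nu)$ into the kernel of reduction $E_1(K_\nu)$ and its complement, and case-check the identity --- is indeed how the cited reference proceeds, so the outline is the right one.

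There is, however, a concrete error in the quasi-parallelogram law you start from, and it is not cosmetic: it makes the case-checks fail. Your singularity condition $\lambda_\nu(P)+\tfrac12\nu(x(P))\to\text{const}$ and the relation $\nu(x(P))=-2\nu(t(P))$ on $E_1$ both force $\nu$ to be the additive valuation with $\nu(\text{uniformizer})>0$. In that convention, in the normalization compatible with \eqref{Neron decomposition}, the correct law is
\begin{equation*}
\lambda_\nu(P+Q)+\lambda_\nu(P-Q)=2\lambda_\nu(P)+2\lambda_\nu(Q)+\nu\bigl(x(P)-x(Q)\bigr)-\tfrac{1}{6}\,\nu(\Delta),
\end{equation*}
which differs from what you wrote both in the sign on $\nu(x(P)-x(Q))$ and in the presence of the $-\tfrac16\nu(\Delta)$ term. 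With your version, the first case (all of $P,Q,P\pm Q$ reducing to non-identity, non-singular points) forces $\nu(x(P)-x(Q))=\tfrac16\nu(\Delta)$, not $0$ as you claim, and so is false at every place of bad reduction; with the corrected law it reduces to $\nu(x(P)-x(Q))=0$, which is true. The sign error also breaks the formal-group case even for good reduction: for $P,Q\in E_1(K_\nu)$ with $\nu(t(P))=1$, $\nu(t(Q))=2$, one has $\nu(t(P\pm Q))=1$ and $\nu(x(P)-x(Q))=-4$, so your law would give $2=6-(-4)=10$, whereas the corrected one gives $2=6+(-4)=2$. Relatedly, ``additivity of $\nu\circ t$'' is too quick --- $\nu\circ t$ is not additive on the formal group, and when $\nu(t(P))=\nu(t(Q))$ the possible cancellation in $t(P\pm Q)$ must be matched against cancellation in $x(P)-x(Q)$ --- but that is a surmountable computation once the identity is stated correctly. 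Fixing the parallelogram law and writing out the formal-group case (the cleanest way to pin down the constant is directly from the singularity condition $\lambda_\nu(P)+\tfrac12\nu(x(P))\to\tfrac{1}{12}\nu(\Delta)$ rather than by appealing to the global decomposition) closes the argument.
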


\noindent
The above formula for local height does not depend on the choice of the Weierstrass equation. However, it only holds for points in $E_0(K_v)$, which depends on the choice of Weierstrass equation, and hence, for two different Weierstrass equation, the formula coincides on the intersection. 

\medskip

Moreover, equation \eqref{smooth points} is valid only for points that are non-singular modulo $v$. The explicit description of  $\lambda_v$ on $E(K_v) \, \backslash \, E_0(K_v)$ depends on the nature of reduction type of $E(K_v)$ and is discussed below case by case. For the minimal Weierstrass equation \eqref{minimal Weierstrass}, following \cite[pp. 42]{AEC}, define 
$$
b_2={a_1}^2+ 4 a_4, \,\, b_4=2 a_4+a_1 a_3, \,\, b_6={a_3}^2+4 a_6 \, \text{ and } \, c_4={b_2}^2-24 b_4.
$$

\subsubsection{\bf Additive reduction ($v(\Delta) > 0, \, v(c_4) > 0$) :} 
\hfill

\medskip

Let $f(x) \in K_v[x]$ be a polynomial. For $P \in E(K_v) \setminus \{ O\}$, denote $f(x(P))$ by $f(P)$. 
\medskip

\noindent
Consider the polynomials :
\begin{align*}
F(x) & := 4x^3 + b_2 x^2 + 2b_4x + b_6,\\
G(x) & := (3x^4+b_2 x^3 + 3b_4 x^2 + 3b_6 x +b_8)^2.
\end{align*}

\smallskip

\noindent
Then, for $P \in E(K_v) \, \backslash \, E_0(K_v)$, we have
$$
\lambda_v(P) = \begin{cases}
  -\frac{1}{6} v(F(P)) + \frac{1}{12} v(\Delta) & \text{if } v(G(P)) \geq 3 v(F(P)),\\[0.5em]  
  -\frac{1}{16} v(G(P)) + \frac{1}{12}.
\end{cases}
$$ 

\smallskip

\subsubsection{\bf Multiplicative reduction ($v(\Delta) > 0, v(c_4) = 0$) :} \hfill

\medskip

Let 
$$
\alpha(P) = \operatorname{min} \left\{ \frac{v((2y+a_1x + a_3)P)}{v(\Delta)}, \frac{1}{2} \right\} .
$$
\noindent
Then, for $P \in E(K_v) \, \backslash \, E_0(K_v)$, the local height is given by 
\begin{equation}\label{multiplicative case}
    \lambda_v(P) = \frac{1}{2} B_2(\alpha(P)) v(\Delta).
\end{equation}

\subsubsection{\bf Good reduction $(v(\Delta) = 0$) :} \hfill

\medskip

In this case, we have $E_0(K_{v})= E(K_{v})$ and $v(\Delta) = 0$. Therefore, by \eqref{smooth points}, we get
\begin{equation*}
    \lambda_v(P) = \frac{1}{2} \operatorname{max} \left\{ v(x(P)^{-1}), 0 \right\}    \end{equation*}
for all $P \in E(K_v) \, \backslash \, \{O\}$. Further, the integrality of coefficients of \eqref{minimal Weierstrass} implies that
$$
v(x(P)^{-1}) < 0 \iff v(x(P)/y(P)) < 0.
$$
Using this along with the fact that $
\operatorname{min} \{3 \, v(x(P)), 0\} = \operatorname{min} \{ 2 \, v(y(P)), 0\},
$ we obtain 
$$
\frac{1}{2} \operatorname{max} \{ v(x(P)^{-1}), 0 \} = v(x(P)/y(P)). 
$$
Therefore, in the good reduction case
\begin{equation}\label{Good red}
    \lambda_v(P) = \operatorname{max} \left\{ v\left(\frac{x(P)}{y(P)}\right), 0 \right\}. \end{equation}

\subsection{Explicit formula for local height and Tate map.} Similar to the uniformization theorem for complex elliptic curves, the Tate parametrization offers a way to parametrize elliptic curves over non-archimedean complete fields (\cite[See V.5]{ATEC}). Therefore, for an elliptic curve $E/K_v$, which has multiplicative reduction at $v$, there exists $q_E \in K_v^{\times}$ such that $|q_E|_v < 1$ and the corresponding parametrization (Tate map) is given by the isomorphism
$$
\phi: K_v^{\times} \big/ q_E^{\Z} \xlongrightarrow{\cong} E(K_v) .
$$
    The Tate parametrization provides an analog of explicit formula for the local height function \eqref{Archimedean} in the non-archimedean case as follows (see \cite[Chapter V, Theorem 4.2]{ATEC}).

\begin{thmx}[Tate]\label{Tate-Formula} Let $K_v/\Q_p$ be a finite extension and $E/K_v$ be an elliptic curve with multiplicative reduction at $v$. Let $\phi$ be the Tate map of $E/K_v$ discussed above. Then,
\begin{enumerate}[(i)]
    \item the Néron local height function
$$
\lambda_v \circ \phi: E(K_v) \backslash\{O\} \longrightarrow \mathbb{R}
$$
is given by the formula
$$
\lambda_v(\phi(u))=\frac{1}{2} B_2\left(\frac{v(u)}{v(q_E)}\right) v(q_E)+v(1-u)+\sum_{n \geq 1} v \left(\left(1-q_E^n u\right)\left(1-q_E^n u^{-1}\right)\right) .
$$
\item If we choose $u$ (by periodicity) to satisfy
$$
0 \leq v(u)<v(q_E),
$$
then
$$
\lambda_v(\phi(u))= \begin{cases}\frac{1}{2} B_2\left(\frac{v(u)}{v(q_E)}\right) v(q_E), & \text { if } 0<v(u)<v(q_E), \\ v(1-u)+\frac{1}{12} v(q_E), & \text { if } v(u)=0 .\end{cases}
$$
\end{enumerate}
\end{thmx}

\medskip
\noindent
\textbf{Remark.} Suppose that $E$ has multiplicative reduction. If $P \in E\left(K_v \right)$, we write $u(P)$ for the image of $P$ in $K_{v}^{\times} \big/ q_E^{\Z}$ under the Tate map $E(K_v) \simeq K_v^\times \big/ q_E^{\Z}$. Consider, the retraction homomorphism $r : E\left(K_v\right) \longrightarrow \mathbb{R} / \Z$ defined as
\begin{equation*}\label{retraction}
r(P)=\frac{\log |u(P)|_v}{\log |q_E|_v}.  
\end{equation*}
\noindent
Since $\{ \, |x|_v \,\, | \, \, x \in K_v^\times \, \} \subseteq p^{\mathbb{Q}}$, the image of $r$ is actually contained in the subgroup $\mathbb{Q} / \Z$ of $\mathbb{R} / \Z$.
% \medskip

% \noindent
% For a detailed discussion and proof of above explicit forms of $\lambda_v$ see \cite[Section 4, Chapter, VI]{ATEC}. 
\medskip

We end this section with the following non-archimedean analog of Proposition \ref{Elkies}.

\begin{proposition}\label{nElkies}
    Let $P_1, P_2,\ldots,P_N \in E(\C_v) \setminus \{O \}$ be $N$ distinct points, where $v$ is a non-archimedean place. Then, 
    \begin{equation}\label{Non-rchimedean II}
\sum_{\substack{1 \leq i,\, j \leq N  \\ i \neq j}}\lambda_v(P_i - P_j) \geq - \frac{N}{12} v({\Delta}).
    \end{equation}
\end{proposition}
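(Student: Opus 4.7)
The plan is to split on the reduction type of $E$ at $\nu$. By the reduction made at the start of Section \ref{canonical_height}, after a finite base change one may assume $E$ has either good or split multiplicative reduction at $\nu$, so only these two cases need to be treated. If $\nu$ is a place of good reduction then $\nu(\Delta)=0$, and formula \eqref{Good red} gives $\lambda_\nu(P)\ge 0$ for every $P\ne O$; hence the sum in question is nonnegative and the claim is immediate.

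For the multiplicative reduction case, I would use the Tate map $\phi:\C_\nu^\times/q_E^{\Z}\to E(\C_\nu)$, which is an isomorphism, together with the classical identity $\nu(\Delta)=\nu(q_E)$ coming from $\Delta=q_E\prod_{n\ge 1}(1-q_E^n)^{24}$. Write $P_i=\phi(u_i)$ and, for $i\ne j$, let $u_{ij}$ denote the representative of $u_iu_j^{-1}$ satisfying $0\le\nu(u_{ij})<\nu(q_E)$. Setting $\epsilon_i:=\nu(u_i)/\nu(q_E)\in\R/\Z$, one has $\nu(u_{ij})/\nu(q_E)\equiv\epsilon_i-\epsilon_j\pmod 1$, and Theorem \ref{Tate-Formula}(ii) delivers the uniform pointwise bound
$$
\lambda_\nu(P_i-P_j)\ \ge\ \tfrac12\,B_2(\epsilon_i-\epsilon_j)\,\nu(q_E).
$$
Equality holds when $0<\nu(u_{ij})<\nu(q_E)$; in the boundary case $\nu(u_{ij})=0$ one picks up an extra nonnegative summand $\nu(1-u_{ij})\ge 0$ on top of $\tfrac1{12}\nu(q_E)=\tfrac12 B_2(0)\nu(q_E)$, the nonnegativity being automatic from the ultrametric inequality.

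The sum is then finished by the positive-definiteness of the Bernoulli kernel on $\R/\Z$. Using the Fourier expansion $B_2(x)=\frac{1}{2\pi^2}\sum_{n\ne 0}e^{2\pi i n x}/n^2$, one obtains
$$
\sum_{i,j=1}^{N} B_2(\epsilon_i-\epsilon_j)\ =\ \frac{1}{2\pi^2}\sum_{n\ne 0}\frac{1}{n^2}\Bigl|\sum_{i=1}^{N} e^{2\pi i n \epsilon_i}\Bigr|^{2}\ \ge\ 0.
$$
Subtracting the diagonal contribution $N B_2(0)=N/6$ gives $\sum_{i\ne j}B_2(\epsilon_i-\epsilon_j)\ge -N/6$, and multiplying by $\tfrac12\nu(q_E)=\tfrac12\nu(\Delta)$ produces the claimed lower bound $-\tfrac{N}{12}\nu(\Delta)$.

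The main obstacle, and essentially the only nonroutine step, is recognizing the Bernoulli term in Tate's formula as a positive-definite kernel on the retraction group $\R/\Z$; this is what makes the diagonal correction cost only $O(N)$ rather than $O(N^2)$, and it is the whole reason one gets $-N\nu(\Delta)/12$ instead of the trivial $-N(N-1)\nu(\Delta)/12$. The discontinuous behavior of Tate's formula at $\nu(u_{ij})=0$ is a minor bookkeeping point that is resolved by the ultrametric inequality.
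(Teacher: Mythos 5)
Your proof is correct and takes essentially the same route as the paper: reduce (after a finite base change) to good and multiplicative reduction, dispatch good reduction trivially, and in the multiplicative case bound $\lambda_\nu$ below using the $B_2$ term in Tate's local height formula together with the Fourier positive--definiteness of the Bernoulli kernel on $\R/\Z$. Your write--up is in fact a bit more careful than the paper's two--line argument: you make explicit that the arguments of $B_2$ are genuine differences of retractions $\epsilon_i-\epsilon_j$, which is what licenses the Fourier estimate, and you handle the case $\nu(u_{ij})=0$ (where $P_i-P_j\in E_0(K_\nu)$, so formula \eqref{multiplicative case} is not literally applicable) by noting the extra $\nu(1-u_{ij})\geq 0$ from the ultrametric inequality; the paper glosses over both of these points.
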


Above proposition holds trivially in the good reduction case, so it is enough to consider additive and multiplicative reduction case. Since the local height is invariant under base change to a higher extension, there always exists a finite extension of $K_v$ such that $E$ has either good reduction or multiplicative reduction at $v$. Therefore, it is enough to prove Proposition \ref{nElkies} for the multiplicative reduction case. We need the following result from Fourier analysis for the proof of  Proposition \ref{nElkies} (see \cite{Montgomery} or \cite[Excercise 6.11(b)]{ATEC}).

\begin{lemma}\label{Fourier series}
    Let $t_1, \ldots, t_N \in \mathbb{R}$ and $B_2(t) = (t -[t])^2 - (t-[t]) + \frac{1}{6}$ be the periodic extension of second Bernoulli polynomial on the interval $[0,1]$. Then,
$$
\sum_{\substack{1 \leq i, j \leq N \\ i \neq j}} B_2\left(t_i-t_j\right) \geq-\frac{N}{6} .
$$
\end{lemma}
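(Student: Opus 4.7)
The plan is to use the Fourier expansion of the periodic extension of $B_2$ together with a non-negativity argument coming from recognizing a certain double sum as a squared modulus. Recall the classical Fourier series
\[
B_2(t) \;=\; \frac{1}{\pi^2} \, \sum_{n=1}^{\infty} \frac{\cos(2\pi n t)}{n^2},
\]
valid for all $t \in \R$ since the periodic extension of $B_2$ is continuous on $\R$ and its Fourier coefficients are $\widehat{B_2}(n) = 1/(2\pi^2 n^2)$ for $n \neq 0$ and $\widehat{B_2}(0)=0$. The uniform convergence of this series (guaranteed by the $1/n^2$ decay) will allow us to interchange summations freely in what follows.

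First I would compute the \emph{full} double sum (including the diagonal $i=j$). Using the Fourier expansion and swapping the finite sum over $i,j$ with the absolutely convergent series in $n$, I obtain
\[
\mathlarger{\mathlarger{\sum}}_{1\leq i,j\leq N} B_2(t_i-t_j)
\;=\; \frac{1}{\pi^2}\,\sum_{n=1}^{\infty}\frac{1}{n^2}\,\mathlarger{\mathlarger{\sum}}_{1\leq i,j\leq N}\cos\bigl(2\pi n (t_i - t_j)\bigr).
\]
The inner double sum is the key step: writing $\cos\bigl(2\pi n(t_i-t_j)\bigr) = \operatorname{Re}\bigl(e^{2\pi i n t_i}\,\overline{e^{2\pi i n t_j}}\bigr)$ gives
\[
\mathlarger{\mathlarger{\sum}}_{1\leq i,j\leq N}\cos\bigl(2\pi n (t_i - t_j)\bigr)
\;=\;\left|\,\sum_{i=1}^{N} e^{2\pi i n t_i}\,\right|^{2} \;\geq\; 0.
\]
Consequently every term in the series over $n$ is non-negative, and therefore
\[
\mathlarger{\mathlarger{\sum}}_{1\leq i,j\leq N} B_2(t_i-t_j) \;\geq\; 0.
\]

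Finally, I would separate out the diagonal contribution. The $N$ diagonal terms each contribute $B_2(0) = 0 - 0 + \tfrac{1}{6} = \tfrac{1}{6}$, so
\[
\mathlarger{\mathlarger{\sum}}_{\substack{1\leq i,j\leq N\\ i\neq j}} B_2(t_i-t_j)
\;=\; \mathlarger{\mathlarger{\sum}}_{1\leq i,j\leq N} B_2(t_i-t_j) \;-\; \frac{N}{6}
\;\geq\; -\frac{N}{6},
\]
which is exactly the claimed inequality. The ``hard'' part is essentially already done for us by the classical Fourier identity for $B_2$; the only genuine content is the positive-definiteness observation that turns the double sum into $|\sum_i e^{2\pi i n t_i}|^2$, which is a standard trick and yields equality precisely when, for every $n \geq 1$, the exponentials $e^{2\pi i n t_i}$ sum to zero.
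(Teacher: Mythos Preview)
Your proof is correct and follows essentially the same approach as the paper: both use the Fourier expansion of $B_2$ and the identity $\sum_{i,j} e^{2\pi i n(t_i-t_j)} = \bigl|\sum_i e^{2\pi i n t_i}\bigr|^2 \geq 0$. The only cosmetic difference is that you include the diagonal terms first and then subtract $N\cdot B_2(0)=N/6$, whereas the paper works directly with the off-diagonal sum and writes $\bigl|\sum_i e^{2\pi i n t_i}\bigr|^2 - N$ before summing over $n$.
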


\begin{proof}
The Fourier expansion of $B_2(t)$ is given by 
$$
B_2(t)=\frac{1}{2 \pi^2} \sum_{n \in \Z, n \neq 0} \frac{e^{2 \pi i n t}}{n^2}.
$$
Therefore, 

\begin{align*}
\sum_{\substack{1 \leq i, j \leq N \\ i \neq j}} B_2\left(t_i-t_j\right) &= \frac{1}{2 \pi^2} \sum_{n \in \Z, n \neq 0} \frac{1}{n^2} \sum_{\substack{1 \leq i, j \leq N \\ i \neq j}} e^{2 \pi i n (t_i - t_j)}\\
&= \frac{1}{2 \pi^2} \sum_{n \in \Z, n \neq 0} \frac{1}{n^2} \left( {\left| \sum_{1 \leq i, j \leq N} e^{2 \pi i n t_i }\,\right|}^2 - N \right)\\
&\geq - \frac{N}{12}.
\end{align*}
\end{proof}

\begin{proof}[Proof of Proposition \ref{nElkies}]
    Since $E/K_v$ has multiplicative reduction, by \eqref{multiplicative case},
    $$
    \sum_{\substack{1 \leq i,\, j \leq N  \\ i \neq j}}\lambda_v(P_i - P_j)  = \frac{v(\Delta)}{2}  \sum_{\substack{1 \leq i,\, j \leq N  \\ i \neq j}} B_2(\alpha(P_i - P_j)). 
    $$
    Applying Lemma \ref{Fourier series}, we have the proposition.
\end{proof}
\medskip 

\section{\bf Proof of Theorem \ref{Elliptic Case I}}\label{proof-elliptic}

\bigskip 

Let $E/K$ be an elliptic curve over a number field $K$ and $\rL = \bigcup_i L_i $ be an asymptotically positive tower over $K$. Let $L \subseteq \rL$ be a finite extension of $K$. For a finite set $Z =\{ P_1, P_2, \ldots, P_N \} \subseteq E(L)$, define 
\begin{equation*}
    \widehat{h}(Z) = \frac{1}{|Z|} \sum_{P\in Z} \widehat{h}(P)
\end{equation*}
to be the average of canonical heights. Using the parallelogram property of canonical height, we obtain
\begin{align}\label{Decomposition formula}
 \widehat{h}(Z) &\geq \frac{1}{4N(N-1)} \sum_{\substack{1 \leq i,\, j \leq N  \\ i \neq j}} \widehat{h}(P_i-P_j) \nonumber \\ 
 &\geq  \frac{1}{4N(N-1)} \mathlarger{\mathlarger{\sum}}_{v \in M_L} \frac{[L_v : \Q_{v}]}{[L:\Q]} \sum_{\substack{1 \leq i,\, j \leq N  \\ i \neq j}} \lambda_v(P_i - P_j).
 \end{align}

Now we begin the proof of Theorem \ref{Elliptic Case I}. Throughout the proof, we assume that $E/L$ has either good or split multiplicative reduction at any non-archimedean place $v \in M_L$. If not, this can be ensured by a finite base change. The proof consists of three parts: obtaining lower bound of the contribution in \eqref{Decomposition formula} due to archimedean places, places of good reduction, and places of split multiplicative reduction. The lower bound for the split multiplicative case is inspired by \cite[Theorem 20]{Baker-2}.

% On the other hand, for a non-archimedean place $v \in M_L \setminus M_L^{\infty}$ of bad reduction, from Proposition \ref{nElkies} we get
% \begin{equation*}
%     \sum_{\substack{1 \leq i,\, j \leq N  \nonumber \\ i \neq j}} \lambda_v(P_i - P_j) \geq -\frac{N}{12} v(\Delta)
% \end{equation*}

% \noindent
% Therefore, we have

% \begin{align}\label{BadRed}
%     \sideset{}{'}\sum_{v \in M_L} \frac{[L_v : \Q_v]}{[L : \Q]} \sum_{\substack{1 \leq i,\, j \leq N \\ i \neq j}} \lambda_v(P_i - P_j) \geq -\frac{N}{12} h(\Delta),
% \end{align}
% \noindent
% where the $ \sideset{}{'}\sum$ runs over all non-archimedean places of bad reduction of $E$ and $h(\Delta)$ is the Weil height of $\Delta$.

\begin{proof}[\bf Proof of Theorem \ref{Elliptic Case I}]

For an archimedean place $ v \in M_L^{\infty}$, from \eqref{Archimedean I} we know that
\begin{equation*}
 \sum_{\substack{1 \leq i,\, j \leq N  \\ i \neq j}} \lambda_v(P_i - P_j) \geq -\frac{1}{2} N  \log N - b(E) N.
\end{equation*}
\noindent
Since $\sum_{v \in M_L^{\infty}} {[L_v : \Q_{v}]}/{[L : \Q]}=1$, we can write 
\begin{equation}\label{archimedean places contribution}
\mathlarger{\mathlarger{\sum}}_{v \in M_L^{\infty}} \frac{[L_v : \Q_{v}]}{[L : \Q]} \sum_{\substack{1 \leq i,\, j \leq N  \\ i \neq j}} \lambda_v(P_i - P_j) \geq -\frac{1}{2} N  \log N - b(E) N.  
\end{equation}
\medskip

\noindent
Suppose $E$ has good reduction at $v \in M_L \setminus M_L^{\infty}$, a non-archimedean place lying above the rational prime $p$. By \eqref{Good red}, we have

$$
\sum_{\substack{1 \leq i, \, j \leq N \\ i \neq j}} \lambda_v(P_i - P_j) \geq \frac{1}{2} \sum_{\substack{1 \leq i, \, j \leq N \\ i \neq j}}  \operatorname{max} \left\{ v\left(\frac{x(P_i - P_j)}{y(P_i - P_j)}\right), 0 \right\}.
$$  
Let $\mathbb{F}_q$ be the residue field of $L_v / \Q_p$ and $\widetilde{E}$ be the reduction of $E$ at $v$. Suppose 
 $$
    \widetilde{E}(\mathbb{F}_q) = \{ T_1, T_2, \ldots, T_m\}.
 $$ 
 For $1 \leq l \leq m$, define 
 
 \begin{equation*}
     A_l := \left| \,\left\{ \, P_i \in Z \,|\, P_i \equiv T_l \, (\,\operatorname{mod} v \,) \right\} \right|.
 \end{equation*}
 Clearly, $\sum_{i=1}^m A_i = N$. Note that if $P_i \equiv P_j \mod v $, then $$v\left(\frac{x(P_i - P_j)}{y(P_i - P_j)}\right) \geq \frac{\log p}{e_v},$$ 
 where $p$ is the rational prime dividing $v$ and $e_v$ is ramification index of $v$. Therefore,
 \begin{align*}
 \sum_{\substack{1 \leq i, \, j \leq N \\ i \neq j}} \lambda_v(P_i - P_j) &\geq \frac{1}{2} \frac{\log p}{e_v} \left( \sum_{l=1}^m  A_l \, (A_l-1) \right)\\
 &\geq \frac{1}{2} \frac{\log p}{e_v}\left( \frac{N^2}m - N \right),
\end{align*}
where the last step follows from the Cauchy-Schwartz inequality. By Hasse's inequality,
 $$
 m \leq q + 1 + 2 \sqrt{q}.
 $$
Hence,
 $$
 \sum_{\substack{1 \leq i, \, j \leq N \\ i \neq j}} \lambda_v(P_i - P_j) \geq \frac{1}{2[L_v : \Q_p]} \left( \frac{N^2}{q + 1 + 2 \sqrt{q}} - N \right) \log q.
$$

\noindent
Therefore,  
\begin{equation}\label{good reduction contribution}
\sideset{}{'}\sum_{v \in M_L} \frac{[L_v : \Q_p]}{[L:\Q]}  \sum_{\substack{1 \leq i, \, j \leq N \\ i \neq j}} \lambda_v(P_i - P_j) \geq \frac{N^2}{2} \, \mathlarger{\mathlarger{\sum}}_{q} \, \, \frac{G_q(L)}{[L:\Q]} \left(\frac{1}{q+ 1+ 2\sqrt{q}} - \frac{1}{N}\right) \log q,
\end{equation}
\noindent
where $\sideset{}{'}\sum$ runs over all non-archimedean $v \in M_L$ at which $E$ has good reduction.\\

\noindent
Finally, suppose $E$ has split multiplicative reduction at $v \in M_L \setminus M_L^{\infty}$, a non-archimedean place with norm $q$. Let $k_v=-\operatorname{ord}_v \left(j_{E}\right) \geq 1$. Recall that the retraction homomorphism $r: E\left(K_v \right) \rightarrow \R / \Z$ factors as
$$
E\left(K_v\right) \longrightarrow K_v^\times / q_E^{\Z} \longrightarrow \R / \Z,
$$
where $q_E \in K_v^\times$ with $|q_E|_v=\left|1 / j_E \right|_v$. Here the first map is the Tate parametrization and the second map is given by $u \mapsto \frac{\log |u|_{v}}{ \log |q|_{v}}$. Note that $\operatorname{Im}(r)=\left\langle 1 / k_v \right\rangle \subseteq$ $\R / \Z$, and therefore $e^{2 \pi i n r\left(P_j\right)}=1$ whenever $k_v \mid n$. Since $P_i \neq P_j$ for $i \neq j$, using Theorem \ref{Tate-Formula} (ii), we obtain
\begin{align*}
\sum_{\substack{1 \leq i, \, j \leq N \\ i \neq j}} \lambda_v(P_i - P_j) & \geq \frac{1}{2}  \ v({q}_E) \sum_{\substack{1 \leq i, \, j \leq N \\ i \neq j}} B_2 \left( \frac{v(u(P_i)) - v(u(P_j))}{v(q_E)} \right)  \\
& \geq \frac{1}{2} \ v(q_E) \left(\frac{1}{2 \pi^2} \sum_{n \in \Z, \ n \neq 0} \frac{1}{n^2} \sum_{\substack{1 \leq i, j \leq N \\ i \neq j}} e^{2 \pi i n \left( \frac{v(u(P_i)) - v(u(P_j))}{v(q_E)}\right)} \right) \\
& \geq \frac{1}{4 \pi^2} \ v(q_E) \left( N(N-1) \sum_{n \in k_v \Z, \ n \neq 0} \frac{1}{n^2} +  \sum_{n \in \Z \setminus{k_v \Z}} \frac{1}{n^2}\sum_{\substack{1 \leq i, j \leq N \\ i \neq j}} e^{2 \pi i n \left( \frac{v(u(P_i)) - v(u(P_j))}{v(q_E)}\right)} \right)
\\
& \geq \frac{1}{4 \pi^2} \ v(q_E) \left( N(N-1) \sum_{n \in k_v \Z, \ n \neq 0} \frac{1}{n^2} +  \sum_{n \in \Z \setminus{k_v \Z}} \frac{1}{n^2} \left( \left\mid \sum_{\substack{1 \leq i \leq N }}  e^{2 \pi i n \left( \frac{v(u(P_i)) }{v(q_E)}\right)} \right\mid^2 - N \right) \right)
\\
& \geq N(N-1) \frac{\log \left|j_E\right|_v}{4 \pi^2} \sum_{n \in k_v \Z \backslash\{0\}} \frac{1}{n^2} + O(N) 
\end{align*}
As $\zeta(2) = \sum_{n\geq 1} \frac{1}{n^2} = \frac{\pi^2}{6}$, we deduce that
\begin{align*}
\sum_{\substack{1 \leq i, \, j \leq N \\ i \neq j}} \lambda_v(P_i - P_j) & \geq N(N-1) \frac{\log \left|j_E\right|_v}{12 \, {k_v}^2} + O(N) \\
& \geq N(N-1) \frac{\log p}{12 \, k_v} + O(N).
\end{align*}

\noindent
Since $j_E$ is fixed, there exists a constant $c_E > 0$ only dependent on $E$ such that 
 $$
k_v  \leq \, \frac{ e_v}{c_E},
$$
where $e_{v}$ is the ramification index of $L_{v}/\Q_p$. Therefore,
 \begin{align}\label{split mul contribution}
 \sideset{}{''}\sum_{v \in M_L} \frac{[L_v : \Q_p]}{[L:\Q]}  \sum_{\substack{1 \leq i, \, j \leq N \nonumber \\ i \neq j}} \lambda_v(P_i - P_j)  \geq &  \sideset{}{''}\sum_{ \, v \in M_L}  \frac{[L_v : \Q_p]}{[L:\Q]} \left( \frac{\log p}{12 \, k_v} N(N-1)  + O(N)\right) \nonumber \\
\geq & \, c_E \, \sideset{}{''}\sum_{v \in M_L}  \frac{[L_v : \Q_p]}{[L:\Q]} \left( \frac{\log p}{12 \,  e_v} N(N-1)  + O(N)\right) \nonumber\\ 
 \geq &  \,  \, c_E \, N(N-1) \sum_{q} \frac{B_{q}(L)}{[L:\Q]} \frac{\log q}{12} + O(N),
 \end{align}
 where $\sideset{}{''}\sum$ runs over all non-archimedean $v \in M_L$ at which $E/L$ has split multiplicative reduction and $q$ runs over all prime powers. Combining \eqref{archimedean places contribution}, \eqref{good reduction contribution} and \eqref{split mul contribution} in \eqref{Decomposition formula}, we obtain  
\begin{align}\label{Final}
\widehat{h}(Z)   \geq \frac{1}{4\,N(N-1)} &\biggl( -\frac{1}{2} N  \log N - b(E) N + \frac{N^2}{2} \, \mathlarger{\mathlarger{\sum}}_{q} \, \, \frac{G_q(L)}{[L:\Q]} \left(\frac{1}{q+ 1+ 2\sqrt{q}} - \frac{1}{N}\right) \log q \nonumber \\
& \, +
N(N-1) \, \mathlarger{\mathlarger{\sum}}_{q} \, \, \frac{B_{q}(L)}{[L:\Q]} \frac{\log q}{12} + O(N) \biggl).    
\end{align}

For any $N$, the above inequality holds for all possible $N$-points $\{P_1,P_2, \cdots,  P_N\} \subset E(L)$. Considering an infinite sequence $\{P_n\}_{n \geq 1}$ in $E(\rL)$ such that $\lim \hat{h}(P_n) = \liminf_{P \in E(\rL)} \hat{h}(P)$. Thus $\lim \hat{h}(Z_N) = \liminf_{P \in E(\rL)} \hat{h}(P)$, where $Z_N = \{P_1, P_2, \ldots, P_N\}$. Taking $N$ large, \eqref{Final} implies that
    \begin{align*}
        \liminf_{P\in E(\rL)}\, \widehat{h}(P) \geq & \,  \frac{1}{8}  \,\mathlarger{\mathlarger{\sum}}_{q} \, \,  \xi_q(\rL) \, \frac{\log q}{q+ 1 + 2\sqrt{q}} +  \frac{c_E}{48} \, \mathlarger{\mathlarger{\sum}}_{q} \, \,  \chi_q(\rL) \, \log q
    \end{align*}
    as required.
\end{proof}

\medskip 

\section{\bf Concluding remarks}
\bigskip

The notion of relative height as discussed in Section \ref{relative-height} leads to several interesting questions. For instance, one can formulate a stronger Schinzel-Zassenhaus conjecture in this context. In \cite{SZ}, A. Schinzel and H. Zassenhaus proposed a weaker version of Lehmer's conjecture, which was recently resolved by V. Dimitrov \cite{Dimitrov}. Let $\house{\strut \alpha}$ denote the house of $\alpha \in \overline{\Q}$ defined as
$$
   \house{\strut \alpha} = \max_i \{|\alpha_i|\},
$$
where $\alpha_i$'s are the conjugates of $\alpha$. Then Dimitrov's theorem states that for any non-zero $\alpha\in \overline{\Q}$ not a root of unity,
\begin{equation*}
    \log \house{\strut\alpha} > \frac{c}{[\Q(\alpha): \Q]}
\end{equation*}
for $c = (\log 2)/4$. In the context of relative heights, it is possible to formulate a generalization of this conjecture in the following way. Let $m$ be a fixed positive integer. For any algebraic integer $\alpha \in \overline{\Q}$ define
\begin{equation*}
     \house{\strut \alpha}_m := \max_{\substack{[\Q(\alpha):K]\leq m\\ \sigma:K \hookrightarrow \C}} M_{\sigma(K)} (\alpha). 
\end{equation*}
When $m=1$, it is clear that $\house{\strut \alpha}_1 = \house{\strut \alpha}$. Thus, it is reasonable to make the following conjecture, in the spirit of Dimitrov's theorem.
\begin{conjecture}
    Let $m$ be a fixed positive integer. For all non-zero algebraic integer $\alpha \in \overline{\Q}$, which are not roots of unity,
    \begin{equation*}
        \log \house{\strut\alpha}_m > \frac{c}{[\Q(\alpha): \Q]},
    \end{equation*}
    for an absolute constant $c>0$.
\end{conjecture}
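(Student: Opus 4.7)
The strategy is a direct reduction to the case $m = 1$, which is precisely Dimitrov's theorem. Monotonicity of $\house{\strut\alpha}_m$ in $m$ is immediate from the definition: enlarging $m$ enlarges the set of admissible pairs $(K,\sigma)$ over which the maximum is taken, so $\house{\strut\alpha}_m \geq \house{\strut\alpha}_1$ for every $m \geq 1$. Hence it suffices to establish the inequality for $m = 1$.

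For $m = 1$, the constraint $[\Q(\alpha):K] \leq 1$ forces $K = \Q(\alpha)$, so the minimal polynomial of $\alpha$ over $K$ is the monic linear polynomial $x - \alpha$, and $M_{\sigma(K)}(\alpha) = \max\{1,\,|\sigma(\alpha)|\}$ for each embedding $\sigma : \Q(\alpha)\hookrightarrow \C$. Since these embeddings carry $\alpha$ to its Galois conjugates, one obtains $\house{\strut\alpha}_1 = \max\{1,\,\house{\strut\alpha}\}$. By Kronecker's theorem, every non-zero algebraic integer $\alpha$ that is not a root of unity satisfies $\house{\strut\alpha} > 1$, so $\house{\strut\alpha}_1 = \house{\strut\alpha}$, and the conjectured inequality $\log\house{\strut\alpha}_1 > c/[\Q(\alpha):\Q]$ is exactly Dimitrov's theorem \cite{Dimitrov}, which supplies $c = (\log 2)/4$. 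For $\alpha$ not an algebraic integer, one may instead normalize the minimal polynomial to lie in $\rO_K[x]$ with content one; the leading coefficient $|b_m| \geq 2$ then contributes directly to $M_{\sigma(K)}(\alpha)$ and the bound follows trivially.

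Combined with the monotonicity, this proves the conjecture for all $m \geq 1$, with the same absolute constant. The main limitation, which indicates the true direction of a meaningful strengthening, is that this reduction is lossy: one might instead hope for a strictly stronger estimate of the form $\log \house{\strut\alpha}_m > c\, g(m)/[\Q(\alpha):\Q]$ with $g(m) \to \infty$, which would be a genuinely relative Schinzel--Zassenhaus phenomenon. The principal obstacle is that when $\Q(\alpha)$ has no proper non-trivial subfields---the generic case whenever $n$ is large relative to $m$---the only admissible $K$ is $\Q(\alpha)$ itself, so $\house{\strut\alpha}_m$ collapses to $\house{\strut\alpha}_1$ and no gain can be extracted from the relative framework. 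Any such strengthening would have to restrict to $\alpha$ whose splitting field possesses a rich subfield lattice, presumably by combining the relative Mahler inequality (Lemma \ref{relative-mahler}) with the Chebyshev-type auxiliary function machinery behind Dimitrov's proof; at present the obstacle posed by generic $\alpha$ seems decisive.
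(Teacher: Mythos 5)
This statement is a conjecture placed in the concluding remarks; the paper offers no proof of it, so there is nothing internal to compare against. Your central observation is nonetheless correct and worth flagging: since $\house{\strut\alpha}_m$ is defined as a \emph{maximum} over pairs $(K,\sigma)$ with $[\Q(\alpha):K]\leq m$, the admissible set of pairs only grows with $m$, so $\house{\strut\alpha}_m \geq \house{\strut\alpha}_1$. For a non-zero algebraic integer $\alpha$ that is not a root of unity, $\house{\strut\alpha}_1 = \max\{1,\house{\strut\alpha}\} = \house{\strut\alpha}$ by Kronecker, and the conjectured inequality is then a weak corollary of Dimitrov's theorem rather than a new problem. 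You also correctly identify where the statement collapses entirely: whenever $[\Q(\alpha):\Q]$ is a prime exceeding $m$, the field $\Q(\alpha)$ has no proper subfield $K$ with $[\Q(\alpha):K]\leq m$, so the maximum is over $K=\Q(\alpha)$ alone, $\house{\strut\alpha}_m = \house{\strut\alpha}_1$, and the relative framework adds nothing. A conjecture genuinely in the spirit of a relative Schinzel--Zassenhaus statement must improve on Dimitrov as $m$ grows (a factor $g(m)\to\infty$), or replace the maximum over $K$ with a minimum, or restrict to $\alpha$ whose field has a rich subfield lattice; as written, it is not open.

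The one real gap in your write-up is the non-integral case, which you dismiss too quickly. For $K=\Q(\alpha)$, the minimal polynomial of $\alpha$ over $K$ has degree one, and your assertion that a normalized leading coefficient has $|b_m|\geq 2$ is unjustified: an $\rO_K$-integral scalar multiple of $x-\alpha$ is not unique when $\rO_K$ is not a PID, and even when one fixes a choice, it is the \emph{norm} of the leading coefficient that is an integer $\geq 2$, not its absolute value at a fixed embedding $\sigma$. Under the monic convention $b_m=1$, any non-integral, non-root-of-unity $\alpha$ all of whose conjugates lie in the open unit disc (for instance $\alpha=1/2$) gives $\house{\strut\alpha}_1=1$, hence $\log\house{\strut\alpha}_1=0$, and the conjectured strict inequality fails outright. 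In fairness this traces to an ambiguity in the paper's own normalization of $f_K$ for non-integral $\alpha$, not to something your argument introduces; but it should be stated as a genuine defect of the conjecture (or a tacit restriction to algebraic integers), not waved off as trivial.
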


\bigskip

\section*{\bf Acknowledgments}
We are grateful to Prof. Sinnou David for several insightful comments, which simplified our proof of Theorem \ref{Elliptic Case I} avoiding the theory of discrepancies. We are thankful to Prof. Martin Widmer, Prof. Lukas Pottmeyer and Prof. Michel Waldschmidt for  fruitful discussions and encouragement. We thank Prof. Sara Checcoli and Prof. Arno Fehm for their comments on an earlier version of this paper. We thank Prof. V. Kumar Murty, Prof. M. Ram Murty, Prof. Arnaud Plessis, Prof. Ananth Shankar, Prof. Siddhi Pathak and Jenvrin Jonathan for several helpful suggestions. 

% \bigskip

% \section*{\bf Conflict of interest}
% On behalf of all authors, the corresponding author states that there is no conflict of interest.

% \bigskip

% \section*{\bf Data availability statement}
% The manuscript has no associated data.

\bigskip 

\end{document}